\documentclass[EJP,preprint]{} 
\usepackage{mathrsfs}
\usepackage{bbm}
\usepackage{bbold}
\usepackage{mathtools}
\usepackage{hyperref}

\usepackage[babel]{csquotes}
\usepackage{enumitem}
\usepackage{soul}





\SHORTTITLE{Limit theorems for multifractal products of random fields}

\TITLE{Limit theorems for multifractal products of random fields\support{This research was partially supported under the Australian Research Council's Discovery Projects funding scheme (project number  DP220101680).}}

\AUTHORS{%
  Illia~Donhauzer\footnote{La Trobe University, Australia. \EMAIL{I.Donhauzer@latrobe.edu.au}}
  \and 
  Andriy~Olenko\footnote{La Trobe University, Australia.
    \EMAIL{A.Olenko@latrobe.edu.au}}}



\KEYWORDS{multifractals; random fields;  random measures; R{\'e}nyi function; sub-Gaussian fields; limit theorems; rate of convergence} 

\AMSSUBJ{60F17; 60F25; 60G57; 60G60} 

\SUBMITTED{February 7, 2022} 
\ACCEPTED{-} 




\VOLUME{0}
\YEAR{2022}
\PAPERNUM{0}
\DOI{10.1214/YY-TN}


\ABSTRACT{ This paper investigates asymptotic properties of multifractal products of random fields. The obtained limit theorems provide sufficient conditions for the convergence of cumulative fields in the spaces $L_q.$ New results on the rate of convergence of cumulative fields are presented.  Simple unified conditions for the limit theorems and the calculation of the R{\'e}nyi function are given.  They are less restrictive than those in the known one-dimensional results. The developed methodology is also applied to  multidimensional multifractal measures. Finally, a new class of examples of geometric $\varphi$-sub-Gaussian random fields is presented. In this case, the general assumptions have a simple form and can be expressed in terms of covariance functions only.}




 \begin{filecontents}{mybib.bib}

@article{kozachenko2013convergence,
  title={On convergence of general wavelet decompositions of nonstationary stochastic processes},
  author={Kozachenko,Yu. and Olenko, A. and Polosmak, O.},
  journal={Electronic Journal of Probability},
  volume={18},
  pages={1--21},
  year={2013},
  publisher={Institute of Mathematical Statistics and Bernoulli Society},
  MRNUMBER = {3084655},
  DOI = {10.1214/EJP.v18-2234},
  URL = {https://doi-org.ez.library.latrobe.edu.au/10.1214/EJP.v18-2234}
}

@article{kozachenko2015convergence,
  title={Convergence in {$L_p([0, T])$} of Wavelet Expansions of $\varphi$-Sub-Gaussian Random Processes},
  author={Kozachenko, Yu. and Olenko, A. and Polosmak, O.},
  journal={Methodology and Computing in Applied Probability},
  volume={17},
  number={1},
  pages={139--153},
  year={2015},
  MRNUMBER = {3306676},
  DOI = {10.1007/s11009-013-9346-7},
  URL = {https://doi-org.ez.library.latrobe.edu.au/10.1007/s11009-013-9346-7},
}

@article{seuret2017multifractal,
  title={Multifractal analysis for the occupation measure of stable-like processes},
  author={Seuret, S. and Yang, X.},
  journal={Electronic Journal of Probability},
  volume={22},
  pages={1--36},
  year={2017},
  publisher={Institute of Mathematical Statistics and Bernoulli Society},
  MRNUMBER = {3661661},
       DOI = {10.1214/17-EJP48},
       URL = {https://doi-org.ez.library.latrobe.edu.au/10.1214/17-EJP48},
}

@article{kozachenko2016whittaker,
  title={{Whittaker--Kotel'nikov--Shannon approximation of $\varphi$-sub-Gaussian random processes}},
  author={Kozachenko, Yu. and Olenko, A.},
  journal={Journal of Mathematical Analysis and Applications},
  volume={443},
  number={2},
  pages={926--946},
  year={2016},
  publisher={Elsevier},
  MRNUMBER = {3514327},
  MRREVIEWER = {Rita Giuliano Antonini},
       DOI = {10.1016/j.jmaa.2016.05.052},
       URL = {https://doi-org.ez.library.latrobe.edu.au/10.1016/j.jmaa.2016.05.052},
}

@article{giuliano2003spaces,
  title={Spaces of $\varphi$-subgaussian random variables},
  author={Giuliano-Antonini, R. and Kozachenko, Yu. and Nikitina, T.},
  journal={Rendiconti, Academia Nazionale delle Scienze detta dei XL, Memorie di Matematica e Applicazioni},
  volume={121},
  pages={95--124},
  year={2003},
MRNUMBER = {2056414},
}

@article{antonini2008convergence,
  title={Convergence of series of dependent $\varphi$-subgaussian random variables},
  author={Giuliano-Antonini, R.  and Kozachenko, Yu. and Volodin, A.},
  journal={Journal of Mathematical Analysis and Applications},
  volume={338},
  number={2},
  pages={1188--1203},
  year={2008},
  publisher={Elsevier},
  MRNUMBER = {2386491},
  MRREVIEWER = {Miguel A. Arcones},
       DOI = {10.1016/j.jmaa.2007.05.073},
       URL = {https://doi-org.ez.library.latrobe.edu.au/10.1016/j.jmaa.2007.05.073},
}

@incollection{ledoux1996isoperimetry,
  title={{Isoperimetry and Gaussian analysis}},
  author={Ledoux, M.},
  booktitle={Bernard P. (eds) Lectures on Probability Theory and Statistics. },
  pages={165--294},
  year={1996},
  publisher={Springer},
  MRNUMBER = {1600888},
  MRREVIEWER = {Qi Man Shao},
       DOI = {10.1007/BFb0095676},
       URL = {https://doi-org.ez.library.latrobe.edu.au/10.1007/BFb0095676},
}

@book{ledoux1991probability,
  title={{Probability in Banach Spaces: isoperimetry and processes}},
  author={Ledoux, M. and Talagrand, M.},
  year={1991},
  publisher={Springer},
 MRNUMBER = {2814399},
}

@article{kozachenko1985banach,
  title={Banach spaces of random variables of sub-Gaussian type},
  author={Kozachenko, Yu. and Ostrovskii, E.I.},
  journal={Theory of Probability and Mathematical Statistics},
  volume={32},
  pages={42--53},
  year={1985},
  MRNUMBER = {882158},
}

@article{shieh2002multifractal,
  title={{Multifractal spectra of branching measure on a Galton-Watson tree}},
  author={Shieh, N.  and Taylor,  J.},
  journal={Journal of Applied Probability},
  volume={39},
  number={1},
  pages={100--111},
  year={2002},
  publisher={Cambridge University Press},
  MRNUMBER = {1895157},
  MRREVIEWER = {Yimin Xiao},
       DOI = {10.1017/s0021900200021549},
       URL = {https://doi-org.ez.library.latrobe.edu.au/10.1017/s0021900200021549},
}

@article{mandelbrot1989multifractal,
  title={Multifractal measures, especially for the geophysicist},
  author={Mandelbrot, B.},
  journal={Pure Appl. Geophys.},
  pages={5--42},
  volume={131},
  MRNUMBER = {1106479},
}

@article{riedi1995improved,
  title={An improved multifractal formalism and self-similar measures},
  author={Riedi, R.},
  journal={Journal of Mathematical Analysis and Applications},
  volume={189},
  number={2},
  pages={462--490},
  year={1995},
  publisher={Elsevier},
  MRNUMBER = {1312056},
       DOI = {10.1006/jmaa.1995.1030},
       URL = {https://doi-org.ez.library.latrobe.edu.au/10.1006/jmaa.1995.1030},
}

@article{kozachenko2018estimates,
  title={{Estimates for functionals of solutions to higher-order heat-type equations with random initial conditions}},
  author={Kozachenko, Yu. and Orsingher, E. and Sakhno, L. and Vasylyk, O.},
  journal={Journal of Statistical Physics},
  volume={172},
  number={6},
  pages={1641--1662},
  year={2018},
  publisher={Springer},
  MRNUMBER = {3856958},
       DOI = {10.1007/s10955-018-2111-0},
       URL = {https://doi-org.ez.library.latrobe.edu.au/10.1007/s10955-018-2111-0},
}

@BOOK{BK,
author = {Buldygin, V. and Kozachenko, Yu.},
title = {Metric Characterization of Random Variables and Random Processes},
address =      { Providence, R.I.},
PUBLISHER =    {American Mathematical Society},
YEAR =  {2000},
MRNUMBER = {1743716},
       URL = {https://doi-org.ez.library.latrobe.edu.au/10.1090/mmono/188},
}

@BOOK{Neveu,
  title={Discrete-parameter Martingales},
  author={Neveu, J.},
  year={1975},
  publisher={Elsevier},
  address = {New-York},
  MRNUMBER = {0402915},
}

@book{Leon,
AUTHOR = {Ivanov, A. and Leonenko, N.},
TITLE = {{Statistical Analysis of Random Fields}},
YEAR = {1989},
address =  {London},
publisher={Springer},
MRNUMBER = {1009786},
       DOI = {10.1007/978-94-009-1183-3},
       URL = {https://doi-org.ez.library.latrobe.edu.au/10.1007/978-94-009-1183-3},
}

@article{Man,
  title={Multifractal products of stochastic processes: construction and some basic properties},
  author={Mannersalo, P. and Norros, I. and Riedi, R.},
  journal={Advances in Applied Probability},
  volume={34},
  number={4},
  pages={888--903},
  year={2002},
  publisher={Cambridge University Press},
  MRNUMBER = {1938947},
       DOI = {10.1239/aap/1037990958},
       URL = {https://doi-org.ez.library.latrobe.edu.au/10.1239/aap/1037990958},
}

@article{pathirana2002multifractal,
  title={Multifractal modelling and simulation of rain fields exhibiting spatial heterogeneity},
  author={Pathirana, A. and Herath, S.},
  journal={Hydrology and Earth System Sciences},
  volume={6},
  number={4},
  pages={695--708},
  year={2002},
  publisher={Copernicus GmbH}
}

@article{leonenko2021analysis,
  title={Analysis of spherical monofractal and multifractal random fields},
  author={Leonenko, N. and Nanayakkara, R. and Olenko, A.},
  journal={Stochastic Environmental Research and Risk Assessment},
  volume={35},
  number={3},
  pages={681--701},
  year={2021},
  publisher={Springer}
}

@article{jaffard1999multifractal,
  title={{The multifractal nature of L{\'e}vy processes}},
  author={Jaffard, S.},
  journal={Probability Theory and Related Fields},
  volume={114},
  number={2},
  pages={207--227},
  year={1999},
  publisher={Springer},
  MRNUMBER = {1701520},
       DOI = {10.1007/s004400050224},
       URL = {https://doi-org.ez.library.latrobe.edu.au/10.1007/s004400050224},
}

@article{leonenko2013renyi,
  title={R{\'e}nyi function for multifractal random fields},
  author={Leonenko, N.  and Shieh, N.},
  journal={Fractals},
  volume={21},
  number={02},
  pages={1350009},
  year={2013},
  publisher={World Scientific},
  MRNUMBER = {3092051},
       DOI = {10.1142/S0218348X13500096},
       URL = {https://doi-org.ez.library.latrobe.edu.au/10.1142/S0218348X13500096},
}

@article{kahane1987,
  title={Positive martingales and random measures},
  author={Kahane, J.},
  journal={Chinese Annals of Mathematics Series B},
  volume={8},
  number={1},
  pages={1--12},
  year={1987},
  MRNUMBER = {886744},
}

@article{molchan1996,
  title={Scaling exponents and multifractal dimensions for independent random cascades},
  author={Molchan, G.},
  journal={Communications in Mathematical Physics},
  volume={179},
  number={3},
  pages={681--702},
  year={1996},
  publisher={Springer},
  MRNUMBER = {1400758},
       URL = {http://projecteuclid.org.ez.library.latrobe.edu.au/euclid.cmp/1104287121},
}

@book{reidi2002,
  title={Theory and Applications of Long-range Dependence},
  author={Doukhan, P. and Oppenheim, G. and Taqqu, M.},
  year={2002},
  publisher={Springer Science},
  address = {Boston},
  MRNUMBER = {1956041},
}

@article{denisov2016limit,
  title={Limit theorems for multifractal products of geometric stationary processes},
  author={Denisov, D. and Leonenko, N.},
  journal={Bernoulli},
  volume={22},
  number={4},
  pages={2579--2608},
  year={2016},
  publisher={Bernoulli Society for Mathematical Statistics and Probability},
  MRNUMBER = {3498038},
       DOI = {10.3150/15-BEJ738},
       URL = {https://doi-org.ez.library.latrobe.edu.au/10.3150/15-BEJ738},
}

@book{rao2012associated,
  title={Associated Sequences, Demimartingales and Nonparametric Inference},
  author={Rao, B.},
  year={2012},
  publisher={Springer},
  address = {Basel},
  MRNUMBER = {3025761},
       DOI = {10.1007/978-3-0348-0240-6},
       URL = {https://doi-org.ez.library.latrobe.edu.au/10.1007/978-3-0348-0240-6},
}

@book{ash,
  title={Information Theory},
  author={Robert, A.},
  year={1990},
  publisher={Dover},
  address = {New York},
  MRNUMBER = {1088248},
}

@article{anh2008log,
  title={Log-normal, log-gamma and log-negative inverted gamma scenarios in multifractal products of stochastic processes},
  author={Anh, V. and Leonenko, N.},
  journal={Statistics \& Probability Letters},
  volume={78},
  number={11},
  pages={1274--1282},
  year={2008},
  publisher={Elsevier},
  MRNUMBER = {2444317},
       DOI = {10.1016/j.spl.2007.11.019},
       URL = {https://doi-org.ez.library.latrobe.edu.au/10.1016/j.spl.2007.11.019},
}

@article{anh2008multifractality,
  title={{Multifractality of products of geometric Ornstein-Uhlenbeck-type processes}},
  author={Anh, V. and Leonenko, N.  and Shieh, N.},
  journal={Advances in Applied Probability},
  volume={40},
  number={4},
  pages={1129--1156},
  year={2008},
  publisher={Cambridge University Press},
  MRNUMBER = {2488535},
       URL = {http://projecteuclid.org.ez.library.latrobe.edu.au/euclid.aap/1231340167},
}

\end{filecontents}

\DeclareMathOperator*{\esssup}{ess\,sup}

\begin{document}

\section{Introduction}

Multifractal temporal and spatial data have been observed in many applications, for example, environmental processes (precipitations fields), engineering (teletraffic), and cosmology \cite{leonenko2021analysis, Man, pathirana2002multifractal}. Jaffard \cite{jaffard1999multifractal} showed that the class of multifractal random processes is wide and all L{\'e}vy processes except Brownian motion and Poisson processes are multifractal. The idea of multifractals initially was proposed by Mandelbrot who pointed out that some systems might possess many scaling rules, contrary to the case of fractals, which can be described by a single fractal dimension. The R{\'e}nyi function is linked to the multifractal spectrum \cite{seuret2017multifractal} by the Legendre transform, and is an important tool in the analysis of multifractal processes. The R{\'e}nyi function depends more regularly on data than the multifractal spectrum, and often used in practice as it can easily be handled analytically and numerically \cite{riedi1995improved}.

There are several approaches to build mathematical models for multifractal random measures. The most popular ones are based on binomial cascades and branching measures on Galton-Watson trees (see \cite{reidi2002, kahane1987, molchan1996, shieh2002multifractal}). An approach to construct a multifractal random measure $\mu(\cdot)$ as a limit of random measures $\mu_m(\cdot)$ generated by cumulative processes $A_m(\cdot)$ defined as multifractal products of random processes was considered in numerous research studies. Kahane \cite{kahane1987} proved that the sequence of random measures $\mu_m(\cdot)$ converges weakly almost surely to a random measure $\mu(\cdot)$. Molchan \cite{molchan1996} studied Mandelbrot's random cascade measures and calculated the R{\'e}nyi function and multifractal dimensions for the case of a general generator. It was found that for these types of models, the R{\'e}nyi function can have discontinuities in its derivatives. Mannersalo et al. \cite{Man} studied the asymptotic behaviour of the cumulative processes $A_m(\cdot)$ and found necessary and sufficient conditions for a pointwise convergence of $A_m(\cdot),$ as $m\to\infty,$ in the space $L_2.$ Under some strict assumptions, the R{\'e}nyi function of the multifractal measure $\mu(\cdot)$ was computed on the interval $q\in [1,2].$ The important novelty of the results obtained in \cite{Man} is the stationarity of  the multifractal random measure $\mu(\cdot),$ which is not always the case for other models including cascading measures. Conditions providing continuity and nondegeneracy of the limit process $A(\cdot)$ were also obtained. Denisov and Leonenko \cite{denisov2016limit} studied conditions for the pointwise convergence of the cumulative processes $A_m(\cdot)$ in the spaces $L_q, \ q>0,$ and calculated the R{\'e}nyi function of the multifractal measure $\mu(\cdot)$ on the interval $[0,p], \ p> 0.$  Their conditions were significantly simpler compared to \cite{Man} and stated in terms of higher-order moments of the underlying random processes. The obtained results were also specified for some cases where the R{\'e}nyi function was calculated explicitly.

Unfortunately, there are only a few cases when the R{\'e}nyi function of the random measure $\mu(\cdot)$ was calculated explicitly. Anh and Leonenko \cite{anh2008log} derived the R{\'e}nyi function for the multifractal random measure $\mu(\cdot)$ under log-normal, log-gamma and log-negative inverted gamma scenarios. Anh et al. \cite{anh2008multifractality} constructed multifractal processes based on multifractal products of geometric Ornstein-Uhlenbeck processes driven by L{\'e}vy motion with inverse Gaussian or normal inverse Gaussian distribution. However, in a general case, conditions guaranteeing that the multifractal measure $\mu(\cdot)$ belongs to the spaces $L_q, \ q>0,$ and the explicit calculation of the R{\'e}nyi function of $\mu(\cdot)$ require further investigations. These problems are even more complex for the multidimensional case as random measures generated by multifractal products of random fields have been less frequently studied.

Multifractal measures and processes defined on multidimensional domains arise in numerous applications. For instance, Mandelbrot \cite{mandelbrot1989multifractal} showed examples of multifractal data in geophysics, while Pathirana and Herath discussed the multifractality of rainfields~\cite{pathirana2002multifractal}. Leonenko and Shieh \cite{leonenko2013renyi} studied the multifractal product of random fields on the sphere by generalizing the results obtained in \cite{Man}. Sufficient conditions for the pointwise convergence of cumulative fields in the space $L_2$ were obtained. In \cite{leonenko2021analysis}, Leonenko et al., motivated by the analysis of cosmic microwave background radiation data, studied multifractal random fields and measures defined on spheres. They developed specific models of multifractal fields where the R{\'e}nyi function can be computed explicitly. However, there are still numerous open problems, in particular about conditions providing the convergence in the spaces $L_q,$ for any $q>0,$ rates of convergence, and  explicitly calculating the R{\'e}nyi function for  new classes of processes.

The main focus of this investigation is to study the multifractal measure $\mu(\cdot)$ defined as a limit of measures $\mu_m(\cdot)$ generated by multifractal products of random fields. First, we generalize limit theorems obtained in \cite{denisov2016limit} by considering multifractal measures on the hypercube $[0,1]^n, \ n\in\mathbb{N}$. Compared to \cite{denisov2016limit}, a modified method is used along with more general mixing conditions on underlying homogeneous and isotropic random fields. The obtained results show the effect of the dimensionality and study several important scenarios.

The novelty of the paper compared to the existing literature is:
\begin{itemize}
\item[--] conditions on the pointwise convergence of the cumulative fields $A_m(\cdot)$ defined on the hypercube $[0,1]^n, \ n\in\mathbb{N},$ in spaces $L_q, \ q > 0,$
\item[--] simplified conditions for the calculation of the R{\'e}nyi function that are less restrictive than in the known one-dimensional results \cite{denisov2016limit, Man},
\item[--] rates of convergence in the spaces $L_q, \ q>0,$ and their analysis,
\item[--] unified assumptions that are stated in the same terms for all results,
\item[--] examples for the case of $\varphi$-sub-Gaussian random fields. To our best knowledge, only the geometric Gaussian and related scenarios have been studied in the existing literature \cite{leonenko2021analysis}. The class of $\varphi$-sub-Gaussian distributions includes compactly supported distributions, centred Weibull distributions, Gaussian, centred Poisson distributions, etc. Thus, the obtained results substantially enlarge the classes of processes for applications of multifractal methods.
\end{itemize}

To prove the main results reported in this paper, the martingale property of the sequence of random variables $A_m(\textbf{\textit{t}})$ is employed. Using this property, we obtain the sufficient conditions for the convergence and the rates of convergence in the spaces $L_q, \ q>0.$ It allows to estimate the moments of the limit measure $\mu(\cdot),$ which is used to calculate the R{\'e}nyi function.

The structure of the paper is as follows. Section \ref{sec1} provides the main definitions and notations that are used in the paper. Section \ref{sec2} gives the sufficient conditions for the pointwise convergence of cumulative fields in $L_q, \ q>0.$ Section \ref{rate_lq_sec} provides the rates of convergence for the obtained limit theorems. Section \ref{sec3} derives the R{\'e}nyi function of the random multifractal measure $\mu(\cdot)$ introduced in Section \ref{sec2}. Finally, Section \ref{sec4} investigates a special case of cumulative fields generated by $\varphi$-sub-Gaussian random fields.

\section{Main definitions and notations}
\label{sec1}
This section introduces main definitions and notations used in the paper.

In the following vectors will be  written in the bold face type (e.g. $\textit{\textbf{t}}$), while a regular font will be used to denote numbers and scalar variables. Throughout the paper, $\mathbb{R}^n_+$, $n\geq 1$, stands for the hyperoctant of $\mathbb{R}^n$ consisting of vectors $\textit{\textbf{s}} = (s_1,s_2,...,s_n)$  with the nonnegative coordinates $s_i\geq 0, \ i\in\overline{1,n}.$ $\mathbb{0}=(0,0,...,0)$ and $\mathbb{1} = (1,1,...,1)$ denote the origin and the unit vectors in $\mathbb{R}^n$ respectively. $||\cdot||$ is the Euclidean norm in $\mathbb{R}^n$.

In what follows, $S_n(u), \ u>0,$ is the centred $n$-dimensional hypersphere $\{\textit{\textbf{x}}\in\mathbb{R}^n: ||\textit{\textbf{x}}|| =u \}$, and $P_n[\mathbb{0},\textit{\textbf{t}}]$ is the hyperparallelepiped with the opposite vertices $\mathbb{0}$ and $\textit{\textbf{t}} = (t_1, t_2,..., t_n), \ t_i\in[0,1], \ i\in\overline{1,n}.$  $C$ with subindices represent generic finite positive constants, which are not necessarily same in each appearance.

In the following, we assume that all random variables and random fields are defined on the same probability space $\big\{\Omega, \mathcal{F}, P\big\}.$

\begin{assumption}
\label{assump_main}
Let $ \Lambda(\textit{\textbf{s}}),\ \textit{\textbf{s}} \in \mathbb{R}^n,$ be a measurable, homogeneous and isotropic, nonnegative random field  such that $P(\Lambda(\mathbb{0})>0)=1,$ $E\Lambda(\mathbb{0}) = 1,$ and $E\Lambda^2(\mathbb{0})<+\infty.$
\end{assumption}
\begin{remark}
The homogeneity and the isotropy are considered in the weak sense unless \mbox{otherwise stated}, i.e. the covariance function $E(\Lambda(\textit{\textbf{u}}_1)-1)(\Lambda(\textit{\textbf{u}}_2)-1), \ \textit{\textbf{u}}_1, \textit{\textbf{u}}_2 \in \mathbb{R}^n,$ is invariant with respect to the groups of motion and rotation transformations respectively.
\end{remark}
Let  $\Lambda^{(i)}(\cdot), \ i\in0,1,...,$ be an infinite collection of independent stochastic copies of $\Lambda(\cdot)$. Let $b > 1$ be a scaling parameter. It will be used to make homothetic transformations of~$\mathbb{R}^n.$ For $m\in\mathbb{N}$, the finite product $\Lambda_m(\cdot)$ of the random fields $\Lambda^{(i)}(\cdot)$ is defined by

\[\Lambda_m(\textit{\textbf{s}}) := \displaystyle\prod_{i=0}^{m-1} {\Lambda}^{(i)}(b^i\textit{\textbf{s}}), \]
and the cumulative random field $A_m(\cdot)$ is given by

\[A_m(\textit{\textbf{t}}) := \int_{P_n[\mathbb{0},\textit{\textbf{t}}]}\Lambda_m(\textit{\textbf{s}}) d\textit{\textbf{s}}, \ \textit{\textbf{t}} \in P_n[\mathbb{0},\mathbb{1}].\]

For a fixed $\textit{\textbf{t}}$, the sequence of random variables $\{A_m(\textit{\textbf{t}}), \ m\geq 1\}$ is a martingale with respect to the filtration $\mathcal{F}_m =\sigma\{\Lambda^{(0)}(\textit{\textbf{s}}),\Lambda^{(1)}(b\textit{\textbf{s}}),...,\Lambda^{(m-1)}(b^{m-1}\textit{\textbf{s}}),\textit{\textbf{s}}\in P_n[\mathbb{0},\textit{\textbf{t}}]\}, \ m\geq 1$. Indeed, for $m\geq j,$ by using Tonelli's theorem and the independence of $\Lambda^{(i)}(\cdot)$ for different~$i,$ one gets
\[E\left(A_m(\textit{\textbf{t}}) |  \mathcal{F}_j \right) =  E\left(\int_{P_n[\mathbb{0},\textit{\textbf{t}}]}\Lambda_m(\textit{\textbf{s}}) d\textit{\textbf{s}} \ \bigg| \  \mathcal{F}_j \right) = \int_{P_n[\mathbb{0},\textit{\textbf{t}}]} E\left(\displaystyle\prod_{i=0}^{m-1} \Lambda^{(i)}(b^i\textit{\textbf{s}}) \ \bigg| \ \mathcal{F}_j \right) d\textit{\textbf{s}} \]

\[ = \int_{P_n[\mathbb{0},\textit{\textbf{t}}]} \displaystyle\prod_{i=0}^{j-1} \Lambda^{(i)}(b^i\textit{\textbf{s}}) d\textit{\textbf{s}}  = A_j(\textit{\textbf{t}}).\]
The product $\Lambda_m(\cdot)$ can also be used to define the nonnegative random measures $\mu_m(\cdot)$ on Borel subsets $B\subseteq P_n[\mathbb{0} ,\mathbb{1}]$ as

\[ \mu_m(B) := \int_B\Lambda_m(\textit{\textbf{s}})d \textit{\textbf{s}}, \ m\in\mathbb{N}.\]

Let $\mu(\cdot)$ be a random measure defined on Borel subsets of $P_n[\mathbb{0},\mathbb{1}].$ The R{\'e}nyi function of the random measure $\mu(\cdot)$ is a deterministic function given by
\[
T(q) = \liminf\limits_{j\to\infty} \frac{\log_2E\sum_l\mu\left(  B_l^{(j)}  \right)^q}{\log_2\left|  B_0^{(j)}  \right|} = \liminf\limits_{j\to\infty}-\frac{\log_2E\sum_l\mu(B^{(j)}_l)^q}{nj}, \  q>0,
\] where  $\{B^{(j)}_l, \ l=0,1,...2^{nj}-1,$ $j = 1,2,...,\}$ denotes the mesh formed by the $j$-th level dyadic decomposition of $P_n[\mathbb{0},\mathbb{1}].$

\section{Limit theorems for multifractal products of random fields}
\label{sec2}
This section investigates the convergence of the random variables $A_m(\textit{\textbf{t}}), \ \textit{\textbf{t}}\in P_n[\mathbb{0},\mathbb{1}],$ in the spaces $L_q,$ when $m\to\infty.$

\begin{assumption}
\label{assump2}
Let $\textit{\textbf{p}} = (p_1,p_2,...,p_k),\ p_j\geq0, \ j=\overline{1,k}, \ k\geq2,$ and the function
\[\rho(\textit{\textbf{u}}_1, \textit{\textbf{u}}_2,...,\textit{\textbf{u}}_k,\textit{\textbf{p}} )  := E\bigg(\prod_{j=1}^k \Lambda^{p_j}(\textit{\textbf{u}}_j)\bigg),\] for all $\textit{\textbf{u}}_j \in \mathbb{R}^n,\ j = \overline{1,k}.$ Let also the function $\rho(\cdot)$ satisfy the condition
\[\rho(\textit{\textbf{u}}^{(1)}_1, \textit{\textbf{u}}^{(1)}_2,...,\textit{\textbf{u}}^{(1)}_k,\textit{\textbf{p}} ) \geq \rho(\textit{\textbf{u}}^{(2)}_1, \textit{\textbf{u}}^{(2)}_2,...,\textit{\textbf{u}}^{(2)}_k,\textit{\textbf{p}} )\] if $||\textit{\textbf{u}}^{(2)}_i - \textit{\textbf{u}}^{(2)}_j||\geq ||\textit{\textbf{u}}^{(1)}_i - \textit{\textbf{u}}^{(1)}_j||,$  $\textit{\textbf{u}}_1^{(l)},\textit{\textbf{u}}_2^{(l)},...,\textit{\textbf{u}}_k^{(l)}\in\mathbb{R}^n,\ l =1,2.$

\end{assumption}

\begin{remark}
\label{remark_assump} The function $\rho(\cdot)$ can be considered as a mixed moment, or a generalized $k$-point covariance function.
Assumption {\rm \ref{assump2}} is a mixing condition on the random field $\Lambda(\cdot)$. By this assumption, the $\rho$-dependence between the locations $\textit{\textbf{u}}^{(1)}_1, \textit{\textbf{u}}^{(1)}_2,...,\textit{\textbf{u}}^{(1)}_k$ is stronger than between the locations $\textit{\textbf{u}}^{(2)}_1, \textit{\textbf{u}}^{(2)}_2,...,\textit{\textbf{u}}^{(2)}_k$ if $\textit{\textbf{u}}^{(1)}_1, \textit{\textbf{u}}^{(1)}_2,...,\textit{\textbf{u}}^{(1)}_k$ are closer to each other than  $\textit{\textbf{u}}^{(2)}_1, \textit{\textbf{u}}^{(2)}_2,...,\textit{\textbf{u}}^{(2)}_k.$

\end{remark}

\begin{example}
\label{ex1}
 Let $\textit{\textbf{p}} = (p_1,p_2,...,p_k) $ such that $\sum_{j=1}^kp_j = p$, $X(\cdot)$ be a homogeneous and isotropic zero-mean Gaussian random field which covariance function $r_X(||\textit{\textbf{u}}||) = E(X(\mathbb{0})X(\textit{\textbf{u}})), \textit{\textbf{u}} \in\mathbb{R}^n,$ is nonincreasing in $||\textit{\textbf{u}}||.$ If $\Lambda(\textit{\textbf{s}}) = e^{X(\textit{\textbf{s}})}/Ee^{X(\mathbb{0})},$ $\textit{\textbf{s}}\in\mathbb{R}^n,\  n\geq 1,$  then Assumption {\rm\ref{assump2}} is satisfied.

It can be shown by the direct calculation of the expectation $E\bigg(\prod_{j=1}^k \Lambda^{p_j}(\textit{\textbf{u}}_j)\bigg).$
Indeed, as $X(\cdot)$ is a Gaussian field, the linear combination $\sum_{j=1}^k p_jX(\textit{\textbf{u}}_j)$ has a Gaussian distribution, and the random variable $e^{\sum_{j=1}^k p_jX(\textit{\textbf{u}}_j)}$ is log-normal. Thus,

\[E\bigg(\prod_{j=1}^k \Lambda^{p_j}(\textit{\textbf{u}}_j)\bigg) = \frac{1}{\left(Ee^{X(0)}\right)^p} Ee^{\sum_{j=1}^k p_jX(\textit{\textbf{u}}_j)}=\frac{1}{e^{\frac{p}{2}EX^2(\mathbb{0})}} e^{{\frac{1}{2}E(\sum_{j=1}^k p_jX(\textit{\textbf{u}}_j))^2}}\]

\begin{equation}
\label{moment}
=\frac{1}{e^{\frac{p}{2}EX^2(\mathbb{0})}} \exp\bigg(\frac{1}{2}\bigg( \sum_{j=1}^kp^2_jEX^2(\mathbb{0}) + \sum\limits_{\substack{i,j=1 \\ i \neq j}}^kp_ip_jr_X(||\textit{\textbf{u}}_i-\textit{\textbf{u}}_j||)\bigg) \bigg).
\end{equation}

If $\{ \textit{\textbf{u}}^{(1)}_1, \textit{\textbf{u}}^{(1)}_2,...,\textit{\textbf{u}}^{(1)}_k \}$ and $\{\textit{\textbf{u}}^{(2)}_1, \textit{\textbf{u}}^{(2)}_2,...,\textit{\textbf{u}}^{(2)}_k\}$  are two sets of points from $\mathbb{R}^n$ satisfying the inequalities $||\textit{\textbf{u}}^{(2)}_i - \textit{\textbf{u}}^{(2)}_j||\geq ||\textit{\textbf{u}}^{(1)}_i - \textit{\textbf{u}}^{(1)}_j||,\ i,j = \overline{1,k},$ then, $r_X(||\textit{\textbf{u}}^{(2)}_i-\textit{\textbf{u}}^{(2)}_j||) \leq r_X(||\textit{\textbf{u}}^{(1)}_{i}-\textit{\textbf{u}}^{(1)}_{j}||)$ because the function $r_X(\cdot)$ is nonincreasing. Thus, Assumption {\rm \ref{assump2}} follows from \eqref{moment}.

\end{example}

For the detailed discussion of the log-normal and other scenarios see Section \ref{sec4}.

If $\textit{\textbf{p}}=(1,1,...,1)$ is a vector with all components equal 1, then for the simplisity of the exposition, we will use the notation
\[\rho(\textit{\textbf{u}}_1, \textit{\textbf{u}}_2,...,\textit{\textbf{u}}_k):=E\bigg(\prod_{j=1}^k\Lambda(\textit{\textbf{u}}_j)\bigg). \]Note, that due to the homogeneity and the isotropy of $\Lambda(\cdot),$ $\rho(\textit{\textbf{u}}_1, \textit{\textbf{u}}_2) = \rho(||\textit{\textbf{u}}_1- \textit{\textbf{u}}_2||).$

\begin{definition}
A random field $X(\textit{\textbf{u}}), \ \textit{\textbf{u}}\in\mathbb{R}^n,$ is $k$-weakly associated, if for any integers $l$ and $l^{'},\ 1\leq l^{'}<l\leq k,$ and $\textit{\textbf{u}}_1, \textit{\textbf{u}}_2,...,\textit{\textbf{u}}_l\in\mathbb{R}^n,$ it holds
\[ cov\left( X(\textit{\textbf{u}}_1)\cdot...\cdot X(\textit{\textbf{u}}_{l^{'}}), X(\textit{\textbf{u}}_{l^{'}+1})\cdot...\cdot X(\textit{\textbf{u}}_{l}) \right)\geq 0. \]
\end{definition}

\begin{remark}
The class of $k$-weakly associated processes includes the class of associated ones (see the corresponding definitions and results on associated random variables in {\rm \cite{rao2012associated}}).
\end{remark}

\begin{remark}
\label{remark_assoc_main}
If $X(\textit{\textbf{u}}), \ \textit{\textbf{u}}\in\mathbb{R}^n,$ is a $k$-weakly associated random field satisfying Assumption {\rm \ref{assump_main}}, then for any integer $l, \ 1\leq l \leq k,$ and any $\textit{\textbf{u}}_1, \textit{\textbf{u}}_2,...,\textit{\textbf{u}}_l\in\mathbb{R}^n,$

\begin{equation}
\label{assoc_ineq}
\rho(\textit{\textbf{u}}_1, \textit{\textbf{u}}_2,...,\textit{\textbf{u}}_l)\geq 1.
\end{equation} Indeed, for $l=2$ and $l'=1$ it follows from $k$-weakly association and Assumption  {\rm \ref{assump_main}}, that
\[ cov(X(\textit{\textbf{u}}_1),X(\textit{\textbf{u}}_2)) = EX(\textit{\textbf{u}}_1)X(\textit{\textbf{u}}_2) - EX(\textit{\textbf{u}}_1)EX(\textit{\textbf{u}}_2)=\rho(\textit{\textbf{u}}_1,\textit{\textbf{u}}_2)-1\geq 0.\] By increasing $l$ by $1$ the statement \eqref{assoc_ineq} follows recursively.

Moreover, for any $l, \ 2\leq l \leq k,$
\[ \rho(\textit{\textbf{u}}_1, \textit{\textbf{u}}_2,...,\textit{\textbf{u}}_l) \geq \rho(\textit{\textbf{u}}_1, \textit{\textbf{u}}_2,...,\textit{\textbf{u}}_{l-1}),\] as
\[ cov(X(\textit{\textbf{u}}_1)X(\textit{\textbf{u}}_{2})\cdot...\cdot X(\textit{\textbf{u}}_{l-1}),X(\textit{\textbf{u}}_l)) = \rho(\textit{\textbf{u}}_1, \textit{\textbf{u}}_2,...,\textit{\textbf{u}}_l) - \rho(\textit{\textbf{u}}_1, \textit{\textbf{u}}_2,...,\textit{\textbf{u}}_{l-1}) \geq 0. \]
\end{remark}

For the simplicity of the exposition, in what follows $\left(P_n[\mathbb{0},\textit{\textbf{t}}]\right)^k, \ k\in\mathbb{N},$ stands for the Cartesian product $P_n[\mathbb{0},\textit{\textbf{t}}]\times P_n[\mathbb{0},\textit{\textbf{t}}]\times...\times P_n[\mathbb{0},\textit{\textbf{t}}]$ of $k$ sets $P_n[\mathbb{0},\textit{\textbf{t}}].$

\begin{theorem}
\label{th2}
Let Assumption {\rm \ref{assump_main}} be satisfied, Assumption {\rm \ref{assump2}} holds true for the vector $\textit{\textbf{p}} = (p_1,p_2,...,p_k),$ $p_j\geq1, \ j=\overline{1,k},$ such that $\sum_{j=1}^kp_j = p \geq2,$
\begin{equation}
\label{cond2.1}
b>(E\Lambda^p(\mathbb{0}))^{\frac{1}{n}}
\end{equation} and
\begin{equation}
\label{cond2.2}
\sum_{i=0}^\infty\ln\left(\rho(\mathbb{0}, b^i\mathbb{1}, 2b^i\mathbb{1},...,(k-1)b^i\mathbb{1}, \textit{\textbf{p}})\right) < \infty.
\end{equation}  Then, for every $\textit{\textbf{t}} \in P_n[\mathbb{0},\mathbb{1}]$ and for all  $q\in[0,p],$ the random variables $A_m(\textit{\textbf{t}})$  converge to some random variables $A(\textit{\textbf{t}})$ in the spaces $L_q,$ as $m \to \infty$.

\end{theorem}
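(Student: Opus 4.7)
The plan is to establish $L_p$-boundedness of the martingale $\{A_m(\textbf{t})\}_{m\geq 1}$ and then invoke standard martingale convergence results. The excerpt has already verified that $A_m(\textbf{t})$ is a nonnegative martingale with $E A_m(\textbf{t}) = t_1\cdots t_n$, so by Doob's theorem it converges almost surely to some limit, which we denote $A(\textbf{t})$. Once $\sup_m E A_m(\textbf{t})^p < \infty$ is established, Doob's $L_p$-martingale convergence theorem (applicable since $p\geq 2 > 1$) gives convergence in $L_p$, and since the underlying space is a probability space, the inequality $\|\cdot\|_q \leq \|\cdot\|_p$ for $q\leq p$ immediately upgrades this to $L_q$-convergence for every $q\in[0,p]$.

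The moment estimate is the substantive step. For integer $p$ (taking $\textbf{p}=(1,\dots,1)$ with $k=p$), Fubini together with the factorization $\Lambda_m(\textbf{s})=\prod_{i=0}^{m-1} \Lambda^{(i)}(b^i\textbf{s})$ and the independence across scales $i$ yields
\[
E A_m(\textbf{t})^p=\int_{(P_n[\mathbb{0},\textbf{t}])^p}\prod_{i=0}^{m-1}\rho(b^i\textbf{s}_1,\ldots,b^i\textbf{s}_p)\, d\textbf{s}_1\cdots d\textbf{s}_p.
\]
For non-integer $p$, the flexibility in the weighted vector $\textbf{p}=(p_1,\dots,p_k)$ plays an interpolating role: a H\"older-type estimate, or a direct bound on the weighted product $\prod_j \Lambda^{p_j}(\textbf{s}_j)$, produces the analogous integral involving $\rho(\,\cdot\,,\textbf{p})$. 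By Assumption \ref{assump2}, $\rho(b^i\textbf{s}_1,\ldots,b^i\textbf{s}_k,\textbf{p})$ is largest when the rescaled points $b^i\textbf{s}_j$ are mutually closest, so it can be majorized by $\rho$ at the canonical stacked configuration $(\mathbb{0},b^i\mathbb{1},\ldots,(k-1)b^i\mathbb{1},\textbf{p})$ appearing in \eqref{cond2.2}, whose product over $i$ is summable by that assumption.

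The main obstacle will be aligning this monotonicity bound with the integration over $(P_n[\mathbb{0},\textbf{t}])^k$ in a way that makes \eqref{cond2.1} decisive. The natural approach is a $b$-adic decomposition of $P_n[\mathbb{0},\textbf{t}]$: classify each $k$-tuple by the greatest depth $j$ at which all of its points share a common $b$-adic subcube of sidelength $b^{-j}$. At depth $j$, the scales $i<j$ see close configurations whose $\rho$ is crudely bounded by $E\Lambda^p(\mathbb{0})$, contributing a factor $(E\Lambda^p(\mathbb{0}))^j$, while scales $i\geq j$ see separated configurations with a uniformly bounded total contribution via \eqref{cond2.2}. Combined with the volume $b^{-n(k-1)j}$ of depth-$j$ configurations and summed over $j$, this produces a geometric series whose ratio is a positive power of $E\Lambda^p(\mathbb{0})/b^n$, strictly less than $1$ exactly by \eqref{cond2.1}. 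Carrying out this partition scheme cleanly in $n$ dimensions, verifying the monotonicity reduction to the stacked configuration in the $k$-point setting, and setting up the interpolation needed for non-integer $p$ constitute the technical core of the argument.
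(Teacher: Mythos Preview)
Your overall plan---reduce to $\sup_m E A_m(\textbf{t})^p<\infty$ via the martingale convergence theorem, then descend to $L_q$ by Jensen---is exactly the paper's strategy, and the H\"older step that turns $E A_m^p$ into a $k$-fold integral of $\prod_i\rho(b^i\textbf{s}_1,\dots,b^i\textbf{s}_k,\textbf{p})$ matches as well. The gap is in the dyadic scheme you propose for bounding that integral.

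Classifying a $k$-tuple by the greatest depth $j$ at which all points share a common $b$-adic cube does \emph{not} force the scales $i\geq j$ to ``see separated configurations.'' Two points can lie in different level-$(j{+}1)$ cubes yet be arbitrarily close (just place them on opposite sides of a cube boundary); more generally nothing in the common-cube event lower-bounds $\min_{l\neq h}\|\textbf{s}_l-\textbf{s}_h\|$. Since Assumption~\ref{assump2} only lets you majorize $\rho$ by a reference configuration whose pairwise distances are \emph{smaller} than those of the original, you cannot bound $\prod_{i\geq j}\rho(b^i\textbf{s}_1,\dots,b^i\textbf{s}_k,\textbf{p})$ via \eqref{cond2.2} from the common-cube information alone. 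Your volume count $b^{-n(k-1)j}$ is correct for the common-cube event, but it is paired with a bound that does not hold.

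The paper instead controls $\rho$ directly through the minimal pairwise distance: it majorizes $\rho(b^i\textbf{s}_1,\dots,b^i\textbf{s}_k,\textbf{p})$ by $\rho$ at an equispaced configuration on the diagonal with spacing proportional to $b^i\min_{l\neq h}\|\textbf{s}_l-\textbf{s}_h\|$, then replaces the minimum by a sum over pairs, integrates out the remaining $k-2$ variables, and reduces the whole $k$-fold integral to a one-dimensional radial integral $\int_0^{1/(k-1)}x^{n-1}\prod_i\rho(\mathbb{0},b^ix\mathbb{1},\dots,(k-1)b^ix\mathbb{1},\textbf{p})\,dx$. Condition~\eqref{cond2.1} then appears as exactly the integrability condition of the integrand near $x=0$ (where it behaves like $x^{n-1-\log_b E\Lambda^p(\mathbb{0})}$), and \eqref{cond2.2} handles the tail. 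If you want a discrete version closer in spirit to your proposal, classify by $j$ with $\min_{l\neq h}\|\textbf{s}_l-\textbf{s}_h\|\in[b^{-(j+1)},b^{-j})$; then the $i\geq j$ factors are genuinely controlled by \eqref{cond2.2}, but the relevant volume becomes $\sim b^{-nj}$ (only the closest pair is constrained), and the geometric ratio is $E\Lambda^p(\mathbb{0})/b^n$---which is precisely why \eqref{cond2.1} is the sharp threshold.
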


\begin{proof}

Being a martingale, the sequence $\{A_m(\textit{\textbf{t}}), m \geq 1\}$ converges almost surely and in the space $L_p,$ as $m \to \infty,$ if $sup_{m}EA_{m}^p(\textit{\textbf{t}})<\infty,$ see {\rm\cite[Proposition {\rm IV-2-7}]{Neveu}}. Thus, it is enough to show that for all $m\geq 1$ the expectations $EA_{m}^p(\textit{\textbf{t}})$ are bounded by the same constant.

If $x_1,x_2\in\mathbb{R}^+,\ x_1<x_2$, then $||x_2\textit{\textbf{s}}_i-x_2\textit{\textbf{s}}_j||\geq||x_1\textit{\textbf{s}}_i -x_1\textit{\textbf{s}}_j||, \ i,j=\overline{1,k},$ for all $\textit{\textbf{s}}_1,\textit{\textbf{s}}_2,...,\textit{\textbf{s}}_k\in\mathbb{R}^n.$ Thus, by Assumption \ref{assump2} it follows that for all fixed points $\textit{\textbf{s}}_1,\textit{\textbf{s}}_2,...,\textit{\textbf{s}}_k \in \mathbb{R}^n$ the function $\rho(x\textit{\textbf{s}}_1,x\textit{\textbf{s}}_2,...,x\textit{\textbf{s}}_k, \textit{\textbf{p}})$ is nonincreasing in $x\in\mathbb{R}^+$.

Now let us consider a moment of order $p$ of $A_m(\textit{\textbf{t}})$
\[E(A_m(\textit{\textbf{t}}))^p =  E \prod_{j=1}^k\left(\int\limits_{P_n[\mathbb{0},\textit{\textbf{t}}]}\Lambda_m(\textit{\textbf{s}}_j)d\textit{\textbf{s}}_j\right)^{p_j}.\] As $p_i\geq1, \ i=\overline{1,k},$ one can apply H\"{o}lder's inequality and obtain the upper bound
\[E(A_m(\textit{\textbf{t}}))^p \leq C \cdot E \left(\prod_{j=1}^k\int\limits_{P_n[\mathbb{0},\textit{\textbf{t}}]}\Lambda_m^{p_j}(\textit{\textbf{s}}_j)d\textit{\textbf{s}}_j\right) = C \cdot E\left( \int\limits_{\big(P_n[\mathbb{0},\textit{\textbf{t}}]\big)^k}  \prod_{j=1}^k \big(\Lambda_m(\textit{\textbf{s}}_j)\big)^{p_j} \prod_{j=1}^kd\textit{\textbf{s}}_j \right)\]
\[= C\cdot E \left( \int\limits_{\big(P_n[\mathbb{0},\textit{\textbf{t}}]\big)^k}\displaystyle\prod_{i=0}^{m-1}\prod_{j=1}^k \bigg( \Lambda^{(i)}(b^i\textit{\textbf{s}}_j) \bigg)^{p_j} \prod_{j=1}^k d\textit{\textbf{s}}_j \right) \]
\begin{equation}\label{rho} =  C \int\limits_{\left(P_n[\mathbb{0},\textit{\textbf{t}}]\right)^k} \prod_{i= 0}^{m-1} \rho(b^i\textit{\textbf{s}}_1,b^i\textit{\textbf{s}}_2,...,b^i\textit{\textbf{s}}_k, \textit{\textbf{p}} ) \prod_{j=1}^k d\textit{\textbf{s}}_j.
\end{equation}

Let us majorize the terms in the above integral. Consider $k$ equidistant points $ \textit{\textbf{u}}_j^{(1)}, \ j=\overline{1,k},$ on the vector  from $\mathbb{0}$ to $\frac{b^i}{\sqrt{n}}\min\limits_{\substack{l,h: l\neq h}}||\textit{\textbf{s}}_l-\textit{\textbf{s}}_h|| \mathbb{1}$ that are
\[ \textit{\textbf{u}}_j^{(1)} = \frac{(j-1)b^i\min\limits_{\substack{l,h: l\neq h}}||\textit{\textbf{s}}_l-\textit{\textbf{s}}_h|| }{(k-1)\sqrt{n}} \mathbb{1}, \ \ j=\overline{1,k},\] and the set of points $\textit{\textbf{u}}_j^{(2)} = b^i\textit{\textbf{s}}_j,\ j=\overline{1,k}.$

As
\[b^i \min\limits_{\substack{l,h: l\neq h}}||\textit{\textbf{s}}_l-\textit{\textbf{s}}_h||  = \frac{b^i\min\limits_{\substack{l,h: l\neq h}}||\textit{\textbf{s}}_l-\textit{\textbf{s}}_h|| }{\sqrt{n}} \left|\left|\mathbb{1}\right|\right|\]
it is easy to see that for all $i,j = \overline{1,k},$ it holds
$||\textit{\textbf{u}}^{(2)}_i - \textit{\textbf{u}}^{(2)}_j||\geq ||\textit{\textbf{u}}^{(1)}_i - \textit{\textbf{u}}^{(1)}_j||.$
By this inequality and Assumption \ref{assump2}
\[\rho(b^i\textit{\textbf{s}}_1,b^i\textit{\textbf{s}}_2,...,b^i\textit{\textbf{s}}_k, \textit{\textbf{p}} ) \] \[\leq \rho\bigg(\mathbb{0}, \frac{\min\limits_{\substack{l,h: l\neq h}}||\textit{\textbf{s}}_l-\textit{\textbf{s}}_h|| b^i\mathbb{1}}{(k-1)\sqrt{n}},\frac{2\min\limits_{\substack{l,h: l\neq h}}||\textit{\textbf{s}}_l-\textit{\textbf{s}}_h|| b^i\mathbb{1}}{(k-1)\sqrt{n}},...,\frac{(k-1)\min\limits_{\substack{l,h: l\neq h}}||\textit{\textbf{s}}_l-\textit{\textbf{s}}_h|| b^i\mathbb{1}}{(k-1)\sqrt{n}}, \textit{\textbf{p}} \bigg).\]
Thus, $E(A_m(\textit{\textbf{t}}))^q$ can be estimated from above by
\[C \int\limits_{\left(P_n[\mathbb{0},\textit{\textbf{t}}]\right)^k} \prod_{i=0}^{m-1}\rho\bigg(\mathbb{0}, \frac{1\cdot b^i\min\limits_{\substack{l,h: l\neq h}}||\textit{\textbf{s}}_l-\textit{\textbf{s}}_h||\mathbb{1}}{(k-1)\sqrt{n}},...,\frac{(k-1)\cdot b^i\min\limits_{\substack{l,h: l\neq h}}||\textit{\textbf{s}}_l-\textit{\textbf{s}}_h||\mathbb{1}}{(k-1)\sqrt{n}}, \textit{\textbf{p}}\bigg)  \prod_{j=1}^k d\textit{\textbf{s}}_j \]
\[ \leq C \sum\limits_{\substack{l,h: l\neq h}}  \int\limits_{\left(P_n[\mathbb{0},\textit{\textbf{t}}]\right)^k} \prod_{i=0}^{m-1}\rho\bigg(\mathbb{0}, \frac{1\cdot b^i||\textit{\textbf{s}}_l-\textit{\textbf{s}}_h||\mathbb{1}}{(k-1)\sqrt{n}},...,\frac{(k-1)\cdot  b^i||\textit{\textbf{s}}_l-\textit{\textbf{s}}_h||\mathbb{1}}{(k-1)\sqrt{n}}, \textit{\textbf{p}}\bigg)  \prod_{j=1}^k d\textit{\textbf{s}}_j \]
\[ \leq C \bigg(\prod\limits_{i=1}^n t_i \bigg)^{k-2} \sum\limits_{\substack{l,h: l\neq h}}  \int\limits_{\left(P_n[\mathbb{0},\textit{\textbf{t}}]\right)^2} \prod_{i=0}^{m-1}\rho\bigg(\mathbb{0}, \frac{1\cdot b^i||\textit{\textbf{s}}_l-\textit{\textbf{s}}_h||\mathbb{1}}{(k-1)\sqrt{n}},...,\frac{(k-1)\cdot  b^i||\textit{\textbf{s}}_l-\textit{\textbf{s}}_h||\mathbb{1}}{(k-1)\sqrt{n}}, \textit{\textbf{p}}\bigg) d\textit{\textbf{s}}_l d\textit{\textbf{s}}_h\]
\[ = C \bigg(\prod\limits_{i=1}^nt_i\bigg)^{k-2} \int\limits_{\left(P_n[\mathbb{0},\textit{\textbf{t}}]\right)^2} \prod_{i=0}^{m-1}\rho\bigg(\mathbb{0}, \frac{1\cdot b^i||\textit{\textbf{s}}_1-\textit{\textbf{s}}_2||\mathbb{1}}{(k-1)\sqrt{n}},...,\frac{(k-1)\cdot b^i ||\textit{\textbf{s}}_1-\textit{\textbf{s}}_2||\mathbb{1}}{(k-1)\sqrt{n}}, \textit{\textbf{p}}\bigg)  d\textit{\textbf{s}}_1d\textit{\textbf{s}}_2.\]

In what follows $A \circleddash B$ denotes the Minkowski difference of the sets $A$ and $B.$ By the change of variables $\textit{\textbf{s}} = \textit{\textbf{s}}_1 - \textit{\textbf{s}}_2$  in the above integral, one obtains

\[  C \bigg(\prod\limits_{i=1}^nt_i\bigg)^{k-1} \int\limits_{P_n[\mathbb{0},\textit{\textbf{t}}]\circleddash P_n[\mathbb{0},\textit{\textbf{t}}]} \prod_{i=0}^{m-1}\rho\bigg(\mathbb{0}, \frac{b^i||\textit{\textbf{s}}||\mathbb{1}}{(k-1)\sqrt{n}},\frac{2 b^i ||\textit{\textbf{s}}||\mathbb{1}}{(k-1)\sqrt{n}},...,\frac{(k-1) b^i ||\textit{\textbf{s}}||\mathbb{1}}{(k-1)\sqrt{n}}, \textit{\textbf{p}}\bigg)  d\textit{\textbf{s}}.\] Using the hyperspherical coordinates, one gets that the last expression is bounded from \mbox{above by}

\[ C \int\limits_{0}^{\sqrt{n}} u^{n-1} \prod_{i=0}^{m-1}\rho\bigg(\mathbb{0}, \frac{b^iu\mathbb{1}}{(k-1)\sqrt{n}},\frac{2 b^i u \mathbb{1}}{(k-1)\sqrt{n}},...,\frac{(k-1) b^i u \mathbb{1}}{(k-1)\sqrt{n}}, \textit{\textbf{p}}\bigg)  du.\] By the change of variables $u = (k-1)\sqrt{n}x,$

\begin{equation}\label{int_prod} E(A_m(\textit{\textbf{t}}))^p \leq C \int\limits_{0}^{1/(k-1)}x^{n-1}  \prod_{i=1}^{m-1}\rho\big(\mathbb{0}, b^i x \mathbb{1},2b^i x \mathbb{1},...,(k-1)b^i x \mathbb{1}, \textit{\textbf{p}}\big)  dx.
\end{equation}

Since $b>1$ and $\rho\big(\mathbb{0}, b^i x \mathbb{1},2b^i x \mathbb{1},...,(k-1)b^i x \mathbb{1}, \textit{\textbf{p}}\big)$ is nonincreasing in $i$, the product in \eqref{int_prod} can be estimated as follows
\[\prod_{i=1}^{m-1}\rho\big(\mathbb{0}, b^i x \mathbb{1},2b^i x \mathbb{1},...,(k-1)b^i x \mathbb{1}, \textit{\textbf{p}}\big) \]
\[ = \exp\bigg( \sum_{i=1}^{m-1}\ln \big(\rho\big(\mathbb{0}, b^i x \mathbb{1},2b^i x \mathbb{1},...,(k-1)b^i x \mathbb{1}, \textit{\textbf{p}}\big)\big)\bigg) \]
\[\leq \exp\bigg(\int_{0}^{m-1} \ln \big(\rho\big(\mathbb{0}, b^y x \mathbb{1},2b^y x \mathbb{1},...,(k-1)b^y x \mathbb{1}, \textit{\textbf{p}}\big)\big) dy\bigg)\]
\[ =  \exp\bigg(\int_{\log_bx}^{m-1+\log_bx}\ln \big(\rho\big(\mathbb{0}, b^u \mathbb{1},2b^u \mathbb{1},...,(k-1)b^u  \mathbb{1}, \textit{\textbf{p}}\big)\big) du\bigg),\] where the last equality is obtained by the change of variables $y = u-\log_bx.$

Thus, \eqref{int_prod} is bounded from above by
\begin{equation}\label{int_int}  C \int\limits_{0}^{1/(k-1)}x^{n-1} \exp\bigg(\int_{\log_bx}^{m-1+\log_bx}\ln \big(\rho\big(\mathbb{0}, b^u \mathbb{1},2b^u   \mathbb{1},...,(k-1)b^u  \mathbb{1}, \textit{\textbf{p}}\big)\big) du\bigg)dx.
\end{equation}
Let us consider two separate cases. First, assume that there exists a finite $x_0$ such that
$\ln \big(\rho\big(\mathbb{0}, b^{x_0} \mathbb{1},2b^{x_0} \mathbb{1},...,(k-1)b^{x_0}  \mathbb{1}, \textit{\textbf{p}}\big)\big)=0.$ Note that in that case $\ln \big(\rho\big(\mathbb{0}, b^{x } \mathbb{1},2b^{x } \mathbb{1},...,(k-1)b^{x }  \mathbb{1}, \textit{\textbf{p}}\big)\big)\leq0$ for all $x\geq x_0$ as $\rho(\cdot)$ is nonincreasing. Then, in the neighbourhood of $0$ the internal integral in \eqref{int_int} can be \mbox{estimated by}

\[\int_{\log_bx}^{x_0}\ln \big(\rho\big(\mathbb{0}, b^u \mathbb{1},2b^u   \mathbb{1},...,(k-1)b^u  \mathbb{1}, \textit{\textbf{p}}\big)\big)du \leq \ln\left(E\Lambda^p(\mathbb{0})\right)\left(x_0-\log_bx\right)\] and the integrand in \eqref{int_int} is bounded \mbox{from above by}
\[x^{n-1}(E\Lambda^p(\mathbb{0}))^{x_0-\log_b  x}=Cx^{n-1}(E\Lambda^p(\mathbb{0}))^{-\log_bx}.\] By applying the identity $a^{\log_bc}=c^{\log_ba},$ one gets $C x^{n-1-\log_bE\Lambda^p(\mathbb{0})},$ which means that \eqref{int_int} is finite if  $b>(E\Lambda^p(\mathbb{0}))^{\frac{1}{n}},$ which provides the condition \eqref{cond2.1} of the theorem.

Lets consider the case when $\ln \big(\rho\big(\mathbb{0}, b^u \mathbb{1},2b^u x \mathbb{1},...,(k-1)b^u  \mathbb{1}, \textit{\textbf{p}}\big)\big)>0$ for all $u\in\mathbb{R}.$ Then, \eqref{int_int} allows the following estimation from above
\[C \int\limits_{0}^{1/(k-1)}x^{n-1} \exp\bigg(\int_{\log_bx}^{\infty}\ln \big(\rho\big(\mathbb{0}, b^u \mathbb{1},2b^u\mathbb{1},...,(k-1)b^u  \mathbb{1}, \textit{\textbf{p}}\big)\big) du\bigg)dx.\]
 As $x\in[0,1/(k-1)],$ then $\log_bx<0,$ and the above integral can be rewritten as
\[ C \int\limits_{0}^{1/(k-1)}x^{n-1} \exp\bigg(\int_{\log_bx}^{0}\ln \big(\rho\big(\mathbb{0}, b^u \mathbb{1},2b^u\mathbb{1},...,(k-1)b^u  \mathbb{1}, \textit{\textbf{p}}\big)\big) du\bigg)\]
\begin{equation}
\label{eq2}
\times\exp\bigg(\int_{0}^{\infty}\ln \big(\rho\big(\mathbb{0}, b^u \mathbb{1},2b^u\mathbb{1},...,(k-1)b^u  \mathbb{1}, \textit{\textbf{p}}\big)\big) du\bigg)dx.
\end{equation}
The function $\rho(\mathbb{0}, b^u\mathbb{1}, 2b^u\mathbb{1},...,(k-1)b^u\mathbb{1}, \textit{\textbf{p}}))$ is decreasing in $u.$ Hence, the following estimate holds true
\[ \int_{0}^\infty \ln (\rho(\mathbb{0}, b^u\mathbb{1}, 2b^u\mathbb{1},...,(k-1)b^u\mathbb{1}, \textit{\textbf{p}}))du \leq  \sum_{i = 0}^\infty  \ln (\rho(\mathbb{0}, b^i\mathbb{1}, 2b^i\mathbb{1},...,(k-1)b^i\mathbb{1}, \textit{\textbf{p}})).\] The last series is finite due to condition \eqref{cond2.2}.
Therefore, the second exponent in \eqref{eq2} is finite.

Thus, $E(A_m(\textit{\textbf{t}}))^p$ is bounded from above by the integral
\[ C \int\limits_{0}^{1/(k-1)}x^{n-1} \exp\bigg(\int_{\log_bx}^{0}\ln \big(\rho\big(\mathbb{0}, b^u \mathbb{1},2b^u  \mathbb{1},...,(k-1)b^u  \mathbb{1}, \textit{\textbf{p}}\big)\big)du\bigg)dx .\]
By the same reasons as in the first case this integral is bounded.  Thus, $A_m(\textit{\textbf{t}})$ converges in the space $L_p,$ as $m\to\infty.$

The convergence in the spaces $L_q, \ q\in[0,p]$, follows by Jensen's inequality, which finishes the proof. \end{proof}

\begin{remark}\label{limits}
The conditions of Theorem {\rm\ref{th2}} also guarantee almost sure convergences of random variables $A_{m}(\textit{\textbf{t}}),\ \textit{\textbf{t}}\in P_n[\mathbb{0}, \mathbb{1}],$ as $m\to\infty$. Moreover, almost sure limits  and limits in the spaces $L_q,\ q\in[0,p],$  coincide, see {\rm\cite[Proposition {\rm IV-2-7}]{Neveu}}.
\end{remark}

\begin{remark}
If there exists $x_0$ such that $\ln(\rho(\mathbb{0},b^{x_0},2b^{x_0}\mathbb{1},...,(k-1)b^{x_0}\mathbb{1},\textit{\textbf{p}}))=0,$ then condition \eqref{cond2.2} is satisfied. Indeed,  the fulfilment of \eqref{cond2.2} follows from the inequality  $\ln(\rho(\mathbb{0},b^{x},2b^{x}\mathbb{1},...,(k-1)b^{x}\mathbb{1},\textit{\textbf{p}}))\leq0, \ x\geq x_0,$ which holds as  $\rho(\cdot)$ is a nonincreasing function.
\end{remark}

\begin{corollary}
\label{cor_lq_unit}
Let the assumptions of Theorem {\rm \ref{th2}} hold true for the vector $\textit{\textbf{p}} = (1,1,...,1)$ that has $ {p}$ components equal to $1$. Then, for every $\textit{\textbf{t}} \in P_n[\mathbb{0},\mathbb{1}]$ and for all $q\in[0,p],$ the random variables $A_m(\textit{\textbf{t}})$  converge to some random variable $A(\textit{\textbf{t}})$ in the spaces $L_q,$ as $m \to \infty$.

\end{corollary}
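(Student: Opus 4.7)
The plan is to derive this corollary directly from Theorem \ref{th2} by specializing the vector parameter. Setting $\textit{\textbf{p}} = (1,1,\ldots,1)$ with $k = p$ components collapses the mixed-moment machinery to a single-power setting: each $p_j = 1$ satisfies $p_j \geq 1$, and $\sum_{j=1}^{k} p_j = p$, so the hypothesis $p \geq 2$ of Theorem \ref{th2} translates into requiring at least two components, which is compatible with the standing requirement $k \geq 2$ in Assumption \ref{assump2}.

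Under this substitution, the function $\rho(\textit{\textbf{u}}_1,\ldots,\textit{\textbf{u}}_k, \textit{\textbf{p}})$ reduces to $E\prod_{j=1}^{k}\Lambda(\textit{\textbf{u}}_j)$, which, by the simplified notation introduced just before the definition of $k$-weak association, is written as $\rho(\textit{\textbf{u}}_1,\ldots,\textit{\textbf{u}}_k)$. Condition \eqref{cond2.1} is already phrased in terms of $E\Lambda^p(\mathbb{0})$ with $p$ the total sum, so it carries over unchanged. Condition \eqref{cond2.2} becomes
\[
\sum_{i=0}^{\infty}\ln\bigl(\rho(\mathbb{0}, b^i\mathbb{1}, 2b^i\mathbb{1}, \ldots, (k-1)b^i\mathbb{1})\bigr) < \infty,
\]
which is precisely the streamlined form corresponding to ``the assumptions of Theorem \ref{th2} hold true for the vector $\textit{\textbf{p}} = (1,1,\ldots,1)$''.

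With all hypotheses of Theorem \ref{th2} thereby verified for this particular choice of $\textit{\textbf{p}}$, I would simply invoke Theorem \ref{th2} to conclude that $A_m(\textit{\textbf{t}})$ converges to a random variable $A(\textit{\textbf{t}})$ in $L_q$ for every $q \in [0,p]$. There is essentially no obstacle here, since the corollary is a direct specialization rather than an independent result. The only point worth spelling out is the notational reduction of $\rho(\cdot,\textit{\textbf{p}})$ to the unparameterized $\rho(\cdot)$, which is purely cosmetic. In short, the proof reduces to a one-line appeal to Theorem \ref{th2}.
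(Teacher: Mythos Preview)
Your proposal is correct and matches the paper's approach: the paper states this corollary without proof, treating it as an immediate specialization of Theorem~\ref{th2} to the vector $\textit{\textbf{p}}=(1,1,\ldots,1)$. Your one-line appeal to Theorem~\ref{th2}, together with the notational reduction of $\rho(\cdot,\textit{\textbf{p}})$ to $\rho(\cdot)$, is exactly what is intended.
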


A partial case is the result about $L_2$ convergence:

\begin{corollary}
\label{th1}
 Let Assumption {\rm \ref{assump_main}}  be satisfied,  $\Lambda(\cdot)$ be a random field such that the function $\rho(||\textit{\textbf{u}}||) = E\Lambda(\mathbb{0})\Lambda(\textit{\textbf{u}}), \ \textit{\textbf{u}} \in \mathbb{R}^n,$ is nonincreasing in $||\textit{\textbf{u}}||$,
\[
b > \left(E\Lambda^2(\mathbb{0})\right)^{1/n}
\] and
\[
\label{cond2}
\sum_{i = 0}^{\infty} \ln \left(\rho\left(\sqrt{n}b^{i}\right) \right) < \infty.
\]
 Then, for every $\textit{\textbf{t}} \in P_n[\mathbb{0},\mathbb{1}]$ and for all $q\in[0,2],$ the random variables $A_m(\textit{\textbf{t}})$  converge to some random variable $A(\textit{\textbf{t}})$ in the spaces $L_q,$ as $m \to \infty$.
\end{corollary}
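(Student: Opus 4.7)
The plan is to deduce Corollary \ref{th1} as a direct specialization of Theorem \ref{th2} to the case $k=2$ and $\textit{\textbf{p}} = (1,1)$, so that $p=\sum_{j=1}^k p_j = 2$. The work is essentially to check that the three hypotheses of Theorem \ref{th2} collapse to the three hypotheses stated in the corollary.

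First, I would verify that Assumption \ref{assump2} holds automatically for $k=2$, $\textit{\textbf{p}}=(1,1)$ under the stated monotonicity of $\rho(\|\cdot\|)$. Indeed, in this case $\rho(\textit{\textbf{u}}_1,\textit{\textbf{u}}_2,\textit{\textbf{p}}) = E\Lambda(\textit{\textbf{u}}_1)\Lambda(\textit{\textbf{u}}_2)$, which by the homogeneity and isotropy from Assumption \ref{assump_main} equals $\rho(\|\textit{\textbf{u}}_1-\textit{\textbf{u}}_2\|)$. The condition $\|\textit{\textbf{u}}_1^{(2)}-\textit{\textbf{u}}_2^{(2)}\|\ge \|\textit{\textbf{u}}_1^{(1)}-\textit{\textbf{u}}_2^{(1)}\|$ then implies the desired inequality for $\rho$ because $\rho(\|\cdot\|)$ is nonincreasing by hypothesis. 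Also $p_1 = p_2 = 1 \geq 1$ as required.

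Next, I would translate the two explicit conditions. Condition \eqref{cond2.1} with $p=2$ reads $b > (E\Lambda^2(\mathbb{0}))^{1/n}$, which matches exactly. For condition \eqref{cond2.2}, with $k=2$ the summand becomes $\ln\rho(\mathbb{0}, b^i\mathbb{1}, \textit{\textbf{p}}) = \ln \rho(\|b^i\mathbb{1}\|) = \ln\rho(\sqrt{n}\,b^i)$, since $\|\mathbb{1}\|=\sqrt{n}$. So the summability hypothesis of the corollary is precisely \eqref{cond2.2} in this case.

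Having verified all hypotheses of Theorem \ref{th2}, the conclusion $A_m(\textit{\textbf{t}}) \to A(\textit{\textbf{t}})$ in $L_q$ for every $q\in[0,2]$ follows immediately. There is no real obstacle here; the only thing to be careful about is to make sure the identification $\rho(\textit{\textbf{u}}_1,\textit{\textbf{u}}_2,(1,1))=\rho(\|\textit{\textbf{u}}_1-\textit{\textbf{u}}_2\|)$ is invoked explicitly, and to note that $\|\mathbb{1}\|=\sqrt{n}$ so that the appropriate scaling of $b^i$ by $\sqrt{n}$ appears in the summability condition. The corollary could therefore be proved in just a few lines.
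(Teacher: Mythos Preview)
Your proposal is correct and is exactly the approach the paper takes: Corollary~\ref{th1} is presented there without a separate proof, as ``a partial case'' of Theorem~\ref{th2} (via Corollary~\ref{cor_lq_unit}) obtained by taking $k=2$, $\textit{\textbf{p}}=(1,1)$, so that $\rho(\textit{\textbf{u}}_1,\textit{\textbf{u}}_2,\textit{\textbf{p}})=\rho(\|\textit{\textbf{u}}_1-\textit{\textbf{u}}_2\|)$ and conditions \eqref{cond2.1}--\eqref{cond2.2} reduce to those stated. Your verification of the three hypotheses, including the observation $\|\mathbb{1}\|=\sqrt{n}$, is precisely what is needed.
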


Theorem \ref{th2} also holds under more general conditions that will be used in the following sections.

\begin{assumption}
\label{assump2'}
Let $\textit{\textbf{p}} = (p_1,p_2,...,p_k),\ p_j\geq1, \ j=\overline{1,k},\ k\geq2,$ such that $\sum_{j=1}^kp_j = p \geq2,$  and there exist a function \[\widetilde{\rho}(\textit{\textbf{u}}_1, \textit{\textbf{u}}_2,...,\textit{\textbf{u}}_k,\textit{\textbf{p}}) \geq E\bigg(\prod_{j=1}^k \Lambda^{p_j}(\textit{\textbf{u}}_j)\bigg)\] that satisfies the inequality
\[\widetilde{\rho}(\textit{\textbf{u}}^{(1)}_1, \textit{\textbf{u}}^{(1)}_2,...,\textit{\textbf{u}}^{(1)}_k,\textit{\textbf{p}}) \geq \widetilde{\rho}(\textit{\textbf{u}}^{(2)}_1, \textit{\textbf{u}}^{(2)}_2,...,\textit{\textbf{u}}^{(2)}_k,\textit{\textbf{p}})\]
for all $\textit{\textbf{u}}_1^{(l)},\textit{\textbf{u}}_2^{(l)},...,\textit{\textbf{u}}_k^{(l)}\in\mathbb{R}^n,\ l =1,2, $ such that $||\textit{\textbf{u}}^{(2)}_i - \textit{\textbf{u}}^{(2)}_j||\geq ||\textit{\textbf{u}}^{(1)}_i - \textit{\textbf{u}}^{(1)}_j||,\ i,j = \overline{1,k}.$

\end{assumption}

For simplicity, in the case of $\textit{\textbf{p}}=(1,1,...,1)$ we will use the notation $\widetilde{\rho}(\textit{\textbf{u}}_1, \textit{\textbf{u}}_2,...,\textit{\textbf{u}}_k).$

\begin{corollary} \label{cor_for_sub}
Let Assumptions {\rm \ref{assump_main}} and {\rm \ref{assump2'}} hold true,
\begin{equation} \label{cond2.1'}
b>\widetilde{\rho}^{\frac{1}{n}}(\mathbb{0},\mathbb{0},...,\mathbb{0},\textit{\textbf{p}}),
\end{equation} and
\begin{equation}\label{cond2.2'}
\sum_{i=0}^\infty\ln\left(\widetilde{\rho}(\mathbb{0}, b^i\mathbb{1}, 2b^i\mathbb{1},...,(k-1)b^i\mathbb{1},\textit{\textbf{p}}) \right) < \infty.
\end{equation} Then, for every $\textit{\textbf{t}} \in P_n[\mathbb{0},\mathbb{1}]$ and for all $q\in[0,p],$ the random variables $A_m(\textit{\textbf{t}})$  converge to some random variable $A(\textit{\textbf{t}})$ in the spaces $L_q,$ as $m \to \infty$.
\end{corollary}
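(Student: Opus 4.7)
My plan is to observe that the proof of Theorem \ref{th2} uses the function $\rho$ in only two structural ways: (i) as an upper bound for the conditional expectations arising after Hölder's inequality and the independence of the copies $\Lambda^{(i)}(\cdot)$, and (ii) as a nonincreasing function of pairwise distances that can be majorized by its values at equidistant points on the diagonal. Since Assumption \ref{assump2'} endows $\widetilde{\rho}$ with exactly these two properties, the proof of Theorem \ref{th2} should transfer essentially verbatim after replacing $\rho$ with $\widetilde{\rho}$.

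Concretely, I would start from the martingale argument of \cite[Proposition IV-2-7]{Neveu} to reduce the claim to the bound $\sup_m EA_m^p(\textit{\textbf{t}}) < \infty$. Applying Hölder's inequality to the $p$-th moment exactly as in \eqref{rho} and using independence of the $\Lambda^{(i)}(\cdot)$ gives
\[
E(A_m(\textit{\textbf{t}}))^p \leq C \int_{(P_n[\mathbb{0},\textit{\textbf{t}}])^k} \prod_{i=0}^{m-1} E\bigg(\prod_{j=1}^k \bigl(\Lambda^{(i)}(b^i\textit{\textbf{s}}_j)\bigr)^{p_j}\bigg) \prod_{j=1}^k d\textit{\textbf{s}}_j,
\]
at which point Assumption \ref{assump2'} lets me bound each inner factor by $\widetilde{\rho}(b^i\textit{\textbf{s}}_1,\ldots,b^i\textit{\textbf{s}}_k,\textit{\textbf{p}})$. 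The monotonicity property of $\widetilde{\rho}$ then allows the same equidistant-points majorization used in Theorem \ref{th2}, reducing the multiple integral to one over a single pair of variables, then via the change $\textit{\textbf{s}} = \textit{\textbf{s}}_1 - \textit{\textbf{s}}_2$ and hyperspherical coordinates to
\[
E(A_m(\textit{\textbf{t}}))^p \leq C \int_0^{1/(k-1)} x^{n-1} \prod_{i=1}^{m-1} \widetilde{\rho}\bigl(\mathbb{0}, b^i x\mathbb{1}, 2b^i x\mathbb{1},\ldots,(k-1)b^i x\mathbb{1}, \textit{\textbf{p}}\bigr)\, dx.
\]

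From here the analytic treatment is the same as in the proof of Theorem \ref{th2}: the logarithm of the product is estimated by an integral, and then two cases are separated according to whether $\ln \widetilde{\rho}\bigl(\mathbb{0}, b^u \mathbb{1}, \ldots, (k-1) b^u \mathbb{1}, \textit{\textbf{p}}\bigr)$ vanishes at some finite $x_0$ or stays strictly positive. In the first case one uses that, by monotonicity, $\widetilde{\rho}$ is maximized at coinciding points, so $\widetilde{\rho}(\mathbb{0}, b^u\mathbb{1},\ldots,(k-1)b^u\mathbb{1},\textit{\textbf{p}}) \leq \widetilde{\rho}(\mathbb{0},\mathbb{0},\ldots,\mathbb{0},\textit{\textbf{p}})$ for all $u$; the integrand near $0$ is then bounded by $C x^{n-1-\log_b \widetilde{\rho}(\mathbb{0},\ldots,\mathbb{0},\textit{\textbf{p}})}$, which is integrable by \eqref{cond2.1'}. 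In the second case, condition \eqref{cond2.2'} ensures that the tail integral $\int_0^\infty \ln\widetilde{\rho}(\mathbb{0}, b^u\mathbb{1},\ldots,(k-1)b^u\mathbb{1},\textit{\textbf{p}})\,du$ is finite via the nonincreasing-sum comparison, and the remaining near-zero factor is again controlled by \eqref{cond2.1'}. Convergence in $L_q$ for $q \in [0,p]$ then follows from Jensen's inequality.

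The main thing to check is the first case: that $\widetilde{\rho}(\mathbb{0},\mathbb{0},\ldots,\mathbb{0},\textit{\textbf{p}})$ genuinely plays the role of $E\Lambda^p(\mathbb{0})$. This is where the replacement of \eqref{cond2.1} by \eqref{cond2.1'} is forced, and it is straightforward: the monotonicity axiom in Assumption \ref{assump2'} with the degenerate choice $\textit{\textbf{u}}^{(1)}_1=\cdots=\textit{\textbf{u}}^{(1)}_k=\mathbb{0}$ shows that $\widetilde{\rho}$ attains its maximum at coinciding points, so the substitution is valid without requiring $\widetilde{\rho}$ to be an actual moment. Once this is noted, every subsequent step of the proof of Theorem \ref{th2} carries through without modification, so no further obstacles arise.
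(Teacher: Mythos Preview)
Your proposal is correct and follows essentially the same approach as the paper: the paper's proof simply notes that one inserts the inequality $\rho(\cdot)\le\widetilde{\rho}(\cdot)$ at step \eqref{rho} and then repeats the proof of Theorem~\ref{th2} verbatim with $\widetilde{\rho}$ in place of $\rho$. Your explicit observation that $\widetilde{\rho}(\mathbb{0},\ldots,\mathbb{0},\textit{\textbf{p}})$ replaces $E\Lambda^p(\mathbb{0})$ via the monotonicity axiom is exactly the point behind the substitution of \eqref{cond2.1'} for \eqref{cond2.1}.
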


\begin{proof}
The proof is analogous to the proof of Theorem \ref{th2}. The key modification is in~\eqref{rho} where an additional  step is required. Namely, the integral in $\eqref{rho}$ can be bounded by
\[ \int\limits_{\left(P_n[\mathbb{0},\textit{\textbf{t}}]\right)^k} \prod_{i= 0}^{m-1} \rho(b^i\textit{\textbf{s}}_1,b^i\textit{\textbf{s}}_2,...,b^i\textit{\textbf{s}}_k, \textit{\textbf{p}} ) \prod_{j=1}^k d\textit{\textbf{s}}_j  \leq  \int\limits_{\left(P_n[\mathbb{0},\textit{\textbf{t}}]\right)^k} \prod_{i= 0}^{m-1} \widetilde{\rho}(b^i\textit{\textbf{s}}_1,b^i\textit{\textbf{s}}_2,...,b^i\textit{\textbf{s}}_k, \textit{\textbf{p}} ) \prod_{j=1}^k d\textit{\textbf{s}}_j.\]
As for all fixed points $\textit{\textbf{s}}_1,\textit{\textbf{s}}_2,...,\textit{\textbf{s}}_k \in \mathbb{R}^n$ the function $\widetilde{\rho}(x\textit{\textbf{s}}_1,x\textit{\textbf{s}}_2,...,x\textit{\textbf{s}}_k, \textit{\textbf{p}} )$ is decreasing in $x\in\mathbb{R}^+,$  the rest of the proof is the same.   \end{proof}

\begin{corollary}\label{measure} If the conditions of Theorem {\rm \ref{th2}} are satisfied, then, for a given finite or countable family of Borel sets $\mathfrak{B} = \{B_j: \ B_j\subseteq P_n[\mathbb{0} ,\mathbb{1}]\}$, it holds
\[ \lim\limits_{m\to\infty} \mu_m(B_j) = \mu(B_j) \ \ \ \ a.s.\]
\end{corollary}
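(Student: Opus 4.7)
The plan is to transfer the martingale-plus-uniform-$L_p$-boundedness argument used for $A_m(\textit{\textbf{t}})$ in the proof of Theorem \ref{th2} to each $\mu_m(B_j)$ separately, and then intersect the countably many full-measure events.

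First, I would check that for every fixed Borel $B \subseteq P_n[\mathbb{0},\mathbb{1}]$, the sequence $\{\mu_m(B), m \geq 1\}$ is a nonnegative martingale with respect to the filtration $\mathcal{F}_m$ from Section \ref{sec1}. The computation is essentially the one already carried out for $A_m(\textit{\textbf{t}})$: by Tonelli's theorem and the independence of the copies $\Lambda^{(i)}(\cdot)$, for $m \geq j$,
\[
E\bigl(\mu_m(B) \mid \mathcal{F}_j\bigr) = \int_B E\bigl(\Lambda_m(\textit{\textbf{s}}) \mid \mathcal{F}_j\bigr) d\textit{\textbf{s}} = \int_B \Lambda_j(\textit{\textbf{s}}) d\textit{\textbf{s}} = \mu_j(B).
\]

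Second, I would establish uniform $L_p$ boundedness. Since $\Lambda_m \geq 0$ and $B \subseteq P_n[\mathbb{0},\mathbb{1}]$, the pointwise domination $\mu_m(B) \leq \mu_m(P_n[\mathbb{0},\mathbb{1}]) = A_m(\mathbb{1})$ holds. The proof of Theorem \ref{th2} shows that $\sup_m EA_m(\mathbb{1})^p < \infty$ under hypotheses \eqref{cond2.1}--\eqref{cond2.2}, so $\sup_m E\mu_m(B)^p < \infty$ as well. The same martingale convergence result (Proposition IV-2-7 in \cite{Neveu}) invoked in the proof of Theorem \ref{th2} and in Remark \ref{limits} then yields almost sure convergence of $\mu_m(B)$ to some limit, which we identify with $\mu(B)$.

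Finally, to obtain the statement for the whole family $\mathfrak{B} = \{B_j\}$, let $\Omega_j$ be the full-measure event on which $\mu_m(B_j) \to \mu(B_j)$. Because $\mathfrak{B}$ is at most countable, $\Omega^* := \bigcap_j \Omega_j$ still satisfies $P(\Omega^*) = 1$, and on $\Omega^*$ the convergence holds simultaneously for all $B_j$. No step is really an obstacle here --- the corollary is essentially a drop-in application of the machinery already built up for $A_m(\textit{\textbf{t}})$; the only minor point worth checking is that $\mu_m(B)$ is $\mathcal{F}_m$-measurable for every Borel $B$, which is immediate from the measurability of $\Lambda_m$ together with Fubini.
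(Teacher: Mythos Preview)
Your proposal is correct and follows the same overall route as the paper: verify the martingale property for each $\mu_m(B_j)$, establish uniform $L_p$ boundedness, invoke the martingale convergence theorem, then intersect the countably many full-measure events. Two minor differences are worth flagging. First, the paper works with the filtration $\mathcal{F}_m = \sigma\{\Lambda^{(i)}(b^i\textit{\textbf{s}}): \textit{\textbf{s}} \in B_j,\ i \leq m-1\}$ tailored to each $B_j$, whereas you use the global filtration from Section~\ref{sec1}; either choice works, and you correctly note the measurability point that needs checking for yours. Second, for the uniform $L_p$ bound the paper simply says ``repeating the same steps as in the proof of Theorem~\ref{th2}'' with $B_j$ in place of $P_n[\mathbb{0},\textit{\textbf{t}}]$, while you shortcut this entirely via the pointwise domination $\mu_m(B) \leq A_m(\mathbb{1})$. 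Your shortcut is cleaner and perfectly sufficient for the a.s.\ statement; the paper's repetition would in principle yield a bound depending on $|B_j|$ rather than on the whole cube, but that extra information is not needed for this corollary.
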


\begin{proof}
Let $B_j\in \mathfrak{B}$. Then $\{\mu_m(B_j), \ m\geq 1\}$ is a martingale with respect to the filtration $\mathcal{F}_m = \sigma\{\Lambda^{(0)}(\textit{\textbf{s}}),\Lambda^{(1)}(b\textit{\textbf{s}}),...,\Lambda^{(m-1)}(b^{m-1}\textit{\textbf{s}}),\textit{\textbf{s}}\in B_j\}, \ m\geq 1$. Repeating the same steps as in the proof of Theorem \ref{th2}, one can show that $\lim\limits_{m\to\infty} \mu_m(B_j) = \mu(B_j)$ a.s. As the family $\mathfrak{B}$ is  finite or countable,  the almost sure convergence holds true for the whole \mbox{family $\mathfrak{B}.$}
\end{proof}
\begin{remark}
Under the assumptions of Corollary {\rm \ref{measure}} the convergence  $\lim\limits_{m\to\infty} \mu_m(B_j) = \mu(B_j)$ also holds true in the spaces $L_q, \ q\in[0,p],$ for each $\ B_j\in \mathfrak{B}.$
\end{remark}

\section{Rates of convergence in L\small{q}}
\label{rate_lq_sec}
This section provides the rates of convergence for the random variables $A_m(\cdot)$ and the random measures $\mu_m(\cdot)$ in the limit theorems from Section \ref{sec2}.

\begin{theorem}
\label{rate_theorem_lq_fast}
Let Assumptions {\rm \ref{assump_main}} and {\rm \ref{assump2}} hold true and $\rho(\cdot)\geq 1 $ for all vectors $\textit{\textbf{q}}' = (1,1,...,1)$ with $ q', \ 2 \leq q'\leq {p},$ components, where $p $ is an even integer.  Also, let there exist  such $x_0$ and $C_1>0$ that for all $q'$ and $x\geq x_0$
\begin{equation*}
\ln\rho(\mathbb{0},x\mathbb{1},2x\mathbb{1},...,(q'-1)x\mathbb{1}) \leq C_1 x^{-\alpha}, \ \alpha> \frac{\ln(E\Lambda^p(\mathbb{0}))}{\ln b}.\end{equation*} If
\begin{equation}
\label{rate_cond2_lq_fast}
b >  \left(E\Lambda^p(\mathbb{0})\right)^{\frac{\gamma}{(\gamma-1)n}}
\end{equation} for some $\gamma \in (\max\left(1, n/\alpha\right), n\ln(b)/\ln(E\Lambda^p(\mathbb{0}))$,
then for all $\textit{\textbf{t}}\in P_n[\mathbb{0},\mathbb{1}]$ and $q \in [2,p]$
\begin{equation*}
E|A(\textit{\textbf{t}})-A_m(\textit{\textbf{t}})|^q\leq C  \bigg(\frac{ \prod_{i=1}^nt_i^{q-1}}{b^{nm}}\bigg)^{1/\gamma}.
\end{equation*}
\end{theorem}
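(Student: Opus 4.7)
The plan is to refine the method of Theorem \ref{th2} by tracking explicitly the decay in $m$. First, since $\{A_m(\textit{\textbf{t}})\}$ is an $L_p$-bounded martingale converging to $A(\textit{\textbf{t}})$, Fatou's lemma gives $E|A(\textit{\textbf{t}})-A_m(\textit{\textbf{t}})|^q\leq\liminf_{m'\to\infty}E|A_{m'}(\textit{\textbf{t}})-A_m(\textit{\textbf{t}})|^q$, so it is enough to bound the latter uniformly in $m'$. Using the multiplicative structure
\begin{equation*}
A_{m'}(\textit{\textbf{t}})-A_m(\textit{\textbf{t}})=\int_{P_n[\mathbb{0},\textit{\textbf{t}}]}\Lambda_m(\textit{\textbf{s}})\Big(\prod_{i=m}^{m'-1}\Lambda^{(i)}(b^i\textit{\textbf{s}})-1\Big)d\textit{\textbf{s}}
\end{equation*}
and the independence of $\Lambda_m$ from the copies at scales $i\geq m$, I would first handle even integer $q$. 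Expanding the $q$-th power as an iterated integral and taking expectation splits the integrand into a \emph{head} factor $E[\prod_{j=1}^q\Lambda_m(\textit{\textbf{s}}_j)]$ and a \emph{tail} factor $E[\prod_{j=1}^q(\prod_{i=m}^{m'-1}\Lambda^{(i)}(b^i\textit{\textbf{s}}_j)-1)]$. The head factor is bounded as in the proof of Theorem \ref{th2} via the monotonicity of $\rho$ and Assumption \ref{assump2}, yielding $\prod_{i=0}^{m-1}\rho(\mathbb{0},b^iu\mathbb{1},\ldots,(q-1)b^iu\mathbb{1})$ with $u$ the minimum pairwise distance. The tail factor is controlled by expanding $\prod_j(\cdot-1)$, using $E\Lambda(\mathbb{0})=1$ to cancel lower-order terms, and invoking the decay hypothesis $\ln\rho\leq C_1x^{-\alpha}$ to obtain a bound of order $\min(1,b^{-m\alpha}u^{-\alpha})$.

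Next, after the spherical-coordinates reduction from the proof of Theorem \ref{th2}, the estimate collapses to a radial integral of the form
\begin{equation*}
E|A_{m'}(\textit{\textbf{t}})-A_m(\textit{\textbf{t}})|^q\leq C\Big(\prod_{i=1}^n t_i\Big)^{q-1}\int_0^{\sqrt n}u^{n-1}H(u)T(u;m)\,du.
\end{equation*}
The decisive step is Hölder's inequality with conjugate exponents $\gamma$ and $\gamma/(\gamma-1)$:
\begin{equation*}
\int_0^{\sqrt n}u^{n-1}H(u)T(u;m)\,du\leq\Big(\int_0^{\sqrt n}u^{n-1}H(u)^{\gamma/(\gamma-1)}du\Big)^{(\gamma-1)/\gamma}\Big(\int_0^{\sqrt n}u^{n-1}T(u;m)^\gamma du\Big)^{1/\gamma}.
\end{equation*}
The first factor is finite exactly under condition \eqref{rate_cond2_lq_fast}, by repeating the argument from Theorem \ref{th2} with $p$ replaced by $p\gamma/(\gamma-1)$ in the near-origin analysis. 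The second factor, after splitting at $u\sim b^{-m}$ and computing in the two regimes of $T$, produces the rate $b^{-nm/\gamma}$ provided $\alpha>n/\gamma$: the region $u\leq b^{-m}$ contributes $\int_0^{b^{-m}}u^{n-1}du\sim b^{-nm}$, while the region $u\geq b^{-m}$ contributes $b^{-m\alpha\gamma}\int_{b^{-m}}^{\sqrt n}u^{n-1-\alpha\gamma}du\sim b^{-nm}$, so both regimes combine to give $b^{-nm}$ before taking the $1/\gamma$-th power. This produces $E|A(\textit{\textbf{t}})-A_m(\textit{\textbf{t}})|^q\leq C(\prod t_i)^{q-1}b^{-nm/\gamma}$ for the even integer endpoints $q\in\{2,p\}$.

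For general $q\in[2,p]$, I would invoke the log-convexity of $L^q$-norms,
\begin{equation*}
E|A(\textit{\textbf{t}})-A_m(\textit{\textbf{t}})|^q\leq\bigl(E(A(\textit{\textbf{t}})-A_m(\textit{\textbf{t}}))^2\bigr)^{(p-q)/(p-2)}\bigl(E|A(\textit{\textbf{t}})-A_m(\textit{\textbf{t}})|^p\bigr)^{(q-2)/(p-2)},
\end{equation*}
which preserves the rate $b^{-nm/\gamma}$ and interpolates the exponent on $\prod t_i$ to exactly $q-1$. Finally, since $t_i\in[0,1]$ and $\gamma\geq 1$, the inequality $(\prod t_i)^{q-1}\leq(\prod t_i)^{(q-1)/\gamma}$ converts the bound into the stated form $C(\prod t_i^{q-1}/b^{nm})^{1/\gamma}$.

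\textbf{Main obstacle.} The principal technical challenge is the simultaneous calibration of $\gamma$, which must lie in the window $\gamma>n/\alpha$ (so the tail integral gives the decay $b^{-nm/\gamma}$ and converges at the origin) and $\gamma<n\ln b/\ln(E\Lambda^p(\mathbb{0}))$, equivalent to \eqref{rate_cond2_lq_fast} (so the head integral $\int u^{n-1}H^{\gamma/(\gamma-1)}du$ converges at the origin). This window is nonempty precisely because of the hypothesis $\alpha>\ln(E\Lambda^p(\mathbb{0}))/\ln b$. A secondary delicacy is verifying that the tail expansion $E[\prod_{j=1}^q(\Pi-1)]$ inherits the claimed decay $\min(1,b^{-m\alpha}u^{-\alpha})$ uniformly for all even $q\leq p$, rather than only for $q=2$; this relies on expanding the $2^q$-term product, cancelling constant contributions via $E\Lambda=1$, and bounding each remaining mixed moment via the hypothesis $\rho\geq 1$ together with the exponential decay of $\ln\rho$.
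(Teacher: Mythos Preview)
Your overall architecture matches the paper's proof: head/tail factorisation, H\"older with conjugate exponents $\gamma/(\gamma-1)$ and $\gamma$, and Lyapunov interpolation for non-even $q$. The difference in the order of operations (you reduce to a radial integral before applying H\"older, the paper applies H\"older on $(P_n[\mathbb{0},\textit{\textbf{t}}])^{p}$ first) is harmless, and interpolating between $q=2$ and $q=p$ rather than between adjacent even integers is an equally valid variant.

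There is, however, a genuine gap in your tail estimate. The claimed bound $T(u;m)\leq C\min(1,b^{-m\alpha}u^{-\alpha})$ is false: the ``$1$'' part does not hold. After expanding $\prod_j(\Pi_j-1)$ and using $\rho\geq 1$ as you describe, each surviving term is of the form $\prod_{i>m}\rho(\mathbb{0},Cb^iu\mathbb{1},\ldots)-1$. For $u\ll b^{-m}$, many of the factors $b^iu$ with $i>m$ are still below the threshold $x_0$, so the decay hypothesis $\ln\rho\leq C_1x^{-\alpha}$ is unavailable there and one must use the crude bound $\rho\leq E\Lambda^{q'}(\mathbb{0})$ instead. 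This makes the product blow up like $(b^mu)^{-\log_bE\Lambda^{q'}(\mathbb{0})}$ as $u\to 0$, so your region $u\leq b^{-m}$ does \emph{not} contribute merely $\int_0^{b^{-m}}u^{n-1}\,du$.

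The paper's remedy is the change of variables $t=Cb^mu$. Because the tail depends on $u$ and $m$ only through $b^mu$, one has $T(u;m)=\tilde T(b^mu)$ with $\tilde T$ independent of $m$, and hence
\[
\int_0^{\sqrt n}u^{n-1}T(u;m)^\gamma\,du=b^{-nm}\int_0^{Cb^m\sqrt n}t^{n-1}\tilde T(t)^\gamma\,dt\leq b^{-nm}\int_0^{\infty}t^{n-1}\tilde T(t)^\gamma\,dt.
\]
The factor $b^{-nm}$ comes from the Jacobian, not from a pointwise bound on $T$. Finiteness of the last integral requires \emph{both} ends of the window for $\gamma$: at $t\to\infty$ one uses $\tilde T(t)\leq Ct^{-\alpha}$ and needs $\alpha\gamma>n$; at $t\to 0$ one uses $\tilde T(t)\leq Ct^{-\log_bE\Lambda^{q'}(\mathbb{0})}$ (exactly the near-origin analysis from Theorem~\ref{th2}) and needs $\gamma<n\ln b/\ln E\Lambda^{q'}(\mathbb{0})$. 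So the upper constraint on $\gamma$ is not used solely for the head integral $I_1$, as you suggest, but is equally essential for the tail integral. Once you replace your splitting argument by this rescaling, the rest of your outline goes through.
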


\begin{remark} \label{remark_assoc}
It follows from Remark {\rm \ref{remark_assoc_main}} that $1\leq \rho(\mathbb{0},x\mathbb{1},2x\mathbb{1},...,(q'-1)x\mathbb{1})\leq \rho(\mathbb{0},x\mathbb{1},2x\mathbb{1},\\...,(p-1)x\mathbb{1})$ for all $q', \ 2\leq q'\leq p,$ if the random field $\Lambda(\textit{\textbf{u}}), \ \textit{\textbf{u}}\in\mathbb{R}^n,$ is $p$-weakly associated.
\end{remark}

\begin{proof}
When $l\to\infty$, the random variables $A_l(\textbf{\textit{t}})$ converge to the random variable $A(\textbf{\textit{t}})$, and
\begin{equation}
\label{rate_lq_eq1_proof}
\lim\limits_{l\to\infty} E|A_l(\textit{\textbf{t}})-A_m(\textit{\textbf{t}})|^{p} = E|A(\textit{\textbf{t}})-A_m(\textit{\textbf{t}})|^{p}.
\end{equation}
For any $l>m,$ it holds
\[ E|A_l(\textit{\textbf{t}})-A_m(\textit{\textbf{t}})|^{p}\]
\[ = E \bigg(\int\limits_{P_n[\mathbb{0},\textit{\textbf{t}}]} \bigg( \displaystyle\prod_{i=0}^{l} \Lambda^{(i)}(b^i\textit{\textbf{s}}) - \displaystyle\prod_{i=0}^{m} \Lambda^{(i)}(b^i\textit{\textbf{s}}) \bigg) d\textit{\textbf{s}} \bigg)^{p} \] \[= E \bigg(\int\limits_{P_n[\mathbb{0},\textit{\textbf{t}}]} \displaystyle\prod_{i=0}^{m} \Lambda^{(i)}(b^i\textit{\textbf{s}})\bigg(\displaystyle\prod_{i=m+1}^{l} \Lambda^{(i)}(b^i\textit{\textbf{s}})-1\bigg) d\textit{\textbf{s}} \bigg)^{p}\]
\begin{equation}\label{ints_holder_lq_fast} =  \int\limits_{(P_n[\mathbb{0},\textit{\textbf{t}}])^{p}}\displaystyle\prod_{i=0}^{m}\bigg( E \prod_{j=1}^{p}\Lambda^{(i)}(b^i\textit{\textbf{s}}_j)\bigg) E\bigg(\prod_{j=1}^{p}\bigg( \displaystyle \prod_{i=m+1}^{l} \Lambda^{(i)}(b^i\textit{\textbf{s}}_j) - 1\bigg)\bigg)\prod_{j=1}^{p}d\textit{\textbf{s}}_j.\end{equation}  By applying H\"{o}lder's inequality with the conjugates $\gamma/(\gamma-1)$ and $\gamma>1,$ one gets that the integral in \eqref{ints_holder_lq_fast} is bounded by
\[\bigg(\int\limits_{(P_n[\mathbb{0},\textit{\textbf{t}}])^{p}}\displaystyle\prod_{i=0}^{m}\bigg( E \prod_{j=1}^{p}\Lambda^{(i)}(b^i\textit{\textbf{s}}_j)\bigg)^{\frac{\gamma}{\gamma-1}}\prod_{j=1}^{p}d\textit{\textbf{s}}_j\bigg)^{\frac{\gamma-1}{\gamma}}\]
\[\times \bigg(\int\limits_{(P_n[\mathbb{0},\textit{\textbf{t}}])^{p}} \bigg( E\prod_{j=1}^{p} \bigg( \displaystyle \prod_{i=m+1}^{l} \Lambda^{(i)}(b^i\textit{\textbf{s}}_j) - 1\bigg)\bigg)^{\gamma}\prod_{j=1}^{p}d\textit{\textbf{s}}_j\bigg)^{\frac{1}{\gamma}} := I_1^{\frac{\gamma-1}{\gamma}}\cdot I_2^{\frac{1}{\gamma}}. \]
As
\[ I_1 =\int\limits_{(P_n[\mathbb{0},\textit{\textbf{t}}])^{p}}\displaystyle\prod_{i=0}^{m}\rho^{\frac{\gamma}{\gamma-1}}(b^i\textit{\textbf{s}}_1, b^i\textit{\textbf{s}}_2,...,b^i\textit{\textbf{s}}_p) \prod_{j=1}^{p}d\textit{\textbf{s}}_j,\]
the integral $I_1$ is finite by the computations in the proof of Theorem \ref{th2} for the function $\rho^{\frac{\gamma}{\gamma-1}}(\cdot)$ instead of $\rho(\cdot).$ Assumption \ref{assump2} remains valid with $\textit{\textbf{p}}=(1,1,...,1)$ and the function $\rho^{\frac{\gamma}{\gamma-1}}(\cdot)$ as the power  $\frac{\gamma}{\gamma-1}>1.$ The condition \eqref{cond2.1} becomes \eqref{rate_cond2_lq_fast} and \eqref{cond2.2} holds as
\[\sum_{i=C}^\infty\ln\left(\rho(\mathbb{0}, b^i\mathbb{1}, 2b^i\mathbb{1},...,(k-1)b^i\mathbb{1}, \textit{\textbf{p}})\right) \leq \int\limits_{C-1}^\infty \ln \left(\rho(\mathbb{0}, b^u\mathbb{1}, 2b^u\mathbb{1},...,(k-1)b^u\mathbb{1}, \textit{\textbf{p}})\right)du\]
\[ \leq C_1 \int\limits_{C-1}^\infty \frac{1}{ b^{u\alpha}}du = C\int\limits_{C_2}^\infty\frac{dx}{x^{\alpha+1}}<\infty,\] where the changes of variables $u=\log_bx$ was used.

Now, let us consider the integral $I_2$. For any $x_j\in\mathbb{R},\ i_j = \{0,1\}, \ j=\overline{1,{p}},$ the next relationship holds true
\begin{equation}
\label{sums_fast} \prod_{j=1}^{p}(x_j-1) = \sum_{(i_1,i_2,..,i_{p})}\prod_{j=1}^{p}x_j^{i_j}(-1)^{1-i_j}
= \sum_{{\substack{(i_1,i_2,..,i_{p}) \\ i_1+...+i_{p} \in \overline{\mathbb{N}}_o}}}\prod_{j=1}^{p}x_j^{i_j} - \sum_{{\substack{(i_1,i_2,..,i_{p}) \\ i_1+...+i_{p} \in \mathbb{N}_o}}}\prod_{j=1}^{p}x_j^{i_j},
\end{equation} where $\mathbb{N}_o$ denotes the sets of integer odd numbers and  $\overline{\mathbb{N}}_o$ is its complement.

Applying $\eqref{sums_fast}$ to  $x_j = \prod_{i=m+1}^{l}\Lambda^{(i)}(b^i\textit{\textbf{s}}_j),$ one obtains
\[ E\prod_{j=1}^{p} \bigg( \displaystyle \prod_{i=m+1}^{l} \Lambda^{(i)}(b^i\textit{\textbf{s}}_j) - 1\bigg) \]
\[ = \sum_{{\substack{(i_1,i_2,..,i_{p}) \\ i_1+...+i_{p}\in \overline{\mathbb{N}}_o}}}\prod_{i=m+1}^{l}E\bigg( \prod_{j=1}^{p} \left(\Lambda^{(i)}(b^i\textit{\textbf{s}}_j) \right)^{i_j}\bigg)-\sum_{{\substack{(i_1,i_2,..,i_{p}) \\ i_1+...+i_{p} \in \mathbb{N}_o}}}\prod_{i=m+1}^{l}E \bigg(\prod_{j=1}^{p} \left(\Lambda^{(i)}(b^i\textit{\textbf{s}}_j) \right)^{i_j}\bigg)\]
\begin{equation}
\label{sum_even}
\leq  \sum_{{\substack{(i_1,i_2,..,i_{p}) \\ i_1+...+i_{p} \in \overline{\mathbb{N}}_o}}}\bigg( \prod_{i=m+1}^{l} E\bigg(  \prod_{j=1}^{p} \left(\Lambda^{(i)}(b^i\textit{\textbf{s}}_j) \right)^{i_j}\bigg)-1 \bigg),
\end{equation} as the both sums in the second expression consist of $2^{p-1}$ terms and each term in the second sum is bounded from below by 1.

By applying the H\"{o}lder's inequality to the last sum, one gets the upper bound
\[ \left(\sum_{{\substack{(i_1,i_2,..,i_{p}) \\ i_1+...+i_{p} \in \overline{\mathbb{N}}_o}}}1\right)^{\frac{\gamma-1}{\gamma}}\cdot \left(\sum_{{\substack{(i_1,i_2,..,i_{p}) \\ i_1+...+i_{p} \in \overline{\mathbb{N}}_o}}}\left( \prod_{i=m+1}^{l}E\left(  \prod_{j=1}^{p} \left(\Lambda^{(i)}(b^i\textit{\textbf{s}}_j) \right)^{i_j}\right)-1 \right)^{\gamma}\right)^{\frac{1}{\gamma}}\]
\[ \leq 2^{p-1} \left(\sum_{{\substack{(i_1,i_2,..,i_{p}) \\ i_1+...+i_{p} \in \overline{\mathbb{N}}_o}}}\left( \prod_{i=m+1}^{\infty}E\left(  \prod_{j=1}^{p} \left(\Lambda^{(i)}(b^i\textit{\textbf{s}}_j) \right)^{i_j}\right)-1 \right)^{\gamma}\right)^{\frac{1}{\gamma}},\] as $E\left(\prod_{j=1}^{p} \left(\Lambda^{(i)}(b^i\textit{\textbf{s}}_j) \right)^{i_j}\right) \geq 1,\ \textit{\textbf{s}}_1,...,\textit{\textbf{s}}_p\in\mathbb{R}^n, \ i\in \mathbb{N},$ by the condition $\rho(\cdot)\geq1.$

Thus, from \eqref{rate_lq_eq1_proof}, \eqref{ints_holder_lq_fast} and the above estimate it follows
\[ E|A(\textit{\textbf{t}})-A_m(\textit{\textbf{t}})|^{p} \]
\begin{equation} \label{sum_int_lq_fast} \leq C \left( \sum_{{\substack{(i_1,i_2,..,i_{p}) \\ i_1+...+i_{p} \in \overline{\mathbb{N}}_o}}} \int\limits_{(P_n[\mathbb{0},\textit{\textbf{t}}])^{p}}\left( \prod_{i=m+1}^{\infty}E\left(  \prod_{j=1}^{p} \left(\Lambda^{(i)}(b^i\textit{\textbf{s}}_j) \right)^{i_j}\right)-1 \right)^{\gamma} \prod_{j=1}^{p}d\textit{\textbf{s}}_j\right)^{\frac{1}{\gamma}}.\end{equation} According to Assumption {\rm \ref{assump2}}
\begin{equation}
\label{rate_lq_fast_mix}
E\left(\prod_{j=1}^{p} \left(\Lambda^{(i)}(b^i\textit{\textbf{s}}_j) \right)^{i_j}\right) \leq \rho\left(\mathbb{0}, \frac{\min\limits_{\substack{l,h: l\neq h}}||\textit{\textbf{s}}_l-\textit{\textbf{s}}_h||b^i\mathbb{1}}{\left(\sum\limits_{j=1}^pi_j-1\right)\sqrt{n}},...,\frac{\left({\sum\limits_{j=1}^pi_j}-1\right)\min\limits_{\substack{l,h: l\neq h}}||\textit{\textbf{s}}_l-\textit{\textbf{s}}_h||b^i\mathbb{1}}{\left({\sum\limits_{j=1}^pi_j}-1\right)\sqrt{n}}  \right)\end{equation}
and \eqref{sum_int_lq_fast} can be estimated from above as
\[ C \sum_{{\substack{(i_1,i_2,..,i_{p}) \\ i_1+...+i_{p} \in \overline{\mathbb{N}}_o}}} \sum\limits_{\substack{l,h: l\neq h}} \left(\int\limits_{(P_n[\mathbb{0},\textit{\textbf{t}}])^{p}} \left(\prod_{i=m+1}^{\infty}  \rho \left(\mathbb{0}, \frac{||\textit{\textbf{s}}_l-\textit{\textbf{s}}_h||b^i\mathbb{1}}{(\sum\limits_{j=1}^pi_j-1)\sqrt{n}},\right.\right.\right.\] \[\left.\left.\left. ...,\frac{({\sum\limits_{j=1}^pi_j}-1 ) ||\textit{\textbf{s}}_l-\textit{\textbf{s}}_h||b^i\mathbb{1}}{({\sum\limits_{j=1}^pi_j}-1 )\sqrt{n}}\right)  -1 \right)^{\gamma} \prod_{j=1}^{p}d\textit{\textbf{s}}_j \right)^{\frac{1}{\gamma}}.\]
The change of variables $\textit{\textbf{s}} = \textit{\textbf{s}}_l-\textit{\textbf{s}}_h$ and \eqref{sum_int_lq_fast} result in
\[ E|A(\textit{\textbf{t}})-A_m(\textit{\textbf{t}})|^{p} \leq C \left(\prod\limits_{i=1}^n t_i\right)^\frac{{{p}-1}}{\gamma} \] \[\times\left( \sum_{{\substack{(i_1,i_2,..,i_{p}) \\ i_1+...+i_{p} \in\overline{\mathbb{N}}_o}}}  \int\limits_{{\substack{P_n[\mathbb{0},\textit{\textbf{t}}]\circleddash \\ P_n[\mathbb{0},\textit{\textbf{t}}]}}} \left( \prod_{i=m+1}^{\infty}  \rho\left(\mathbb{0}, \frac{ ||\textit{\textbf{s}} ||b^i\mathbb{1}}{\big({\sum\limits_{j=1}^pi_j}-1\big)\sqrt{n}}, ...,\frac{\big({\sum\limits_{j=1}^pi_j}-1\big) ||\textit{\textbf{s}} ||b^i\mathbb{1}}{\big({\sum\limits_{j=1}^pi_j}-1\big)\sqrt{n}}  \right) -1 \right)^{\gamma} d\textit{\textbf{s}}\right)^{\frac{1}{\gamma}}.\] Denoting $\sum\limits_{j=1}^pi_j = q'$ and using the hyperspherical coordinates in the above integral, one gets
\[ \int\limits_{{\substack{P_n[\mathbb{0},\textit{\textbf{t}}]\circleddash \\ P_n[\mathbb{0},\textit{\textbf{t}}]}}} \bigg( \prod_{i=m+1}^{\infty}  \rho\bigg(\mathbb{0}, \frac{ ||\textit{\textbf{s}} ||b^i\mathbb{1}}{ (q'-1 )\sqrt{n}},...,\frac{ (q'-1 ) ||\textit{\textbf{s}} ||b^i\mathbb{1}}{ (q'-1 )\sqrt{n}}  \bigg) -1 \bigg)^{\gamma} d\textit{\textbf{s}} \]
\[ \leq C \int\limits_{0}^{\sqrt{n}} u^{n-1} \bigg( \prod_{i=m+1}^{\infty}  \rho\bigg(\mathbb{0}, \frac{ub^i\mathbb{1}}{ (q'-1 )\sqrt{n}},...,\frac{ (q'-1 ) u b^i\mathbb{1}}{ (q'-1 )\sqrt{n}}  \bigg) -1 \bigg)^{\gamma} du.\] The change of variables $t=\frac{b^m u}{(q'-1)\sqrt{n}}$ gives
\[ \frac{C}{b^{nm}} \int\limits_{0}^{b^m/(q'-1)} t^{n-1} \bigg( \prod_{i=1}^{\infty} \rho\bigg(\mathbb{0}, b^it\mathbb{1},2b^it\mathbb{1},...,({q'}-1)b^it\mathbb{1} \bigg) -1 \bigg)^\gamma dt.\] As $\rho(\cdot)$ is a nonincreasing function, it can be bounded from above as
\[ \frac{C}{b^{nm}} \int\limits_{0}^{\infty} t^{n-1} \bigg( \exp\bigg( \int\limits_{0}^{\infty} \ln(\rho(\mathbb{0}, b^xt\mathbb{1},2b^xt\mathbb{1},...,({q'}-1)b^xt\mathbb{1}) dx \bigg) -1 \bigg)^\gamma dt. \]  The change of variables $y = b^x t $ gives
\begin{equation}\label{int_int_rate_lq_fast}
\frac{C}{b^{nm}} \int_0^{\infty} t^{n-1}\bigg(\exp\bigg( \int_t^{\infty}\frac{\ln(\rho(\mathbb{0}, y \mathbb{1},2y \mathbb{1},...,({q'}-1) y \mathbb{1}) dx  }{y\ln(b)}dy\bigg) - 1\bigg)^\gamma dt.
\end{equation}

Let us show that the last integral is finite. When $t\to\infty,$
\[\int_t^{\infty}\frac{\ln(\rho(\mathbb{0}, y \mathbb{1},2y \mathbb{1},...,({q'}-1) y \mathbb{1}) }{y}dy\to0\] and, therefore, for sufficiently large $t$
\[ t^{n-1} \bigg( \exp\bigg(\int_t^{\infty}\frac{\ln(\rho(\mathbb{0}, y \mathbb{1},2y \mathbb{1},...,({q'}-1) y \mathbb{1})  }{y\ln(b)}dy \bigg) - 1 \bigg)^\gamma \]  \[\leq Ct^{n-1} \bigg(  \int_t^{\infty}\frac{\ln(\rho(\mathbb{0}, y \mathbb{1},2y \mathbb{1},...,({q'}-1) y \mathbb{1}) }{y}dy\bigg)^\gamma \leq Ct^{n-1}\bigg(\int_t^\infty\frac{dy}{y^{1+\alpha}}\bigg)^\gamma \leq Ct^{n-1-\gamma\alpha}.\]  As $\gamma \in (\max(1, n/\alpha), n\ln(b)/\ln({E\Lambda^p\mathbb{0}))},$ then $n-1-\gamma\alpha<-1,$ and the integrand in \eqref{int_int_rate_lq_fast} is integrable on $[C_2,\infty),$ $C_2>0.$

Now, considering  $t\to 0,$ one obtains
\[  t^{n-1} \bigg( \exp\bigg(\int_t^{\infty}\frac{\ln(\rho(\mathbb{0}, y \mathbb{1},2y \mathbb{1},...,({q'}-1) y \mathbb{1})  }{y\ln(b)}dy \bigg) -1 \bigg)^\gamma \]
\[\leq  t^{n-1} \bigg( \exp\bigg(\int_t^{C}\frac{\ln(\rho(\mathbb{0}, y \mathbb{1},2y \mathbb{1},...,({q'}-1) y \mathbb{1})  }{y\ln(b)}dy  + \int_C^{\infty}\frac{\ln(\rho(\mathbb{0}, y \mathbb{1},2y \mathbb{1},...,({q'}-1) y \mathbb{1})  }{y\ln(b)}dy \bigg)\bigg)^\gamma .\] The second integral is finite, so, for $t\to0$ the exponent is bounded from above by
\[  t^{n-1} \bigg(  \exp\bigg( \int_t^{C}\frac{\ln(\rho(\mathbb{0}, y \mathbb{1},2y \mathbb{1},...,({q'}-1)b^y \mathbb{1})  }{y\ln(b)}dy  + C_2 \bigg)\bigg)^\gamma \]
\[ \leq  t^{n-1} \bigg( \exp\bigg( C_2-\ln t\cdot\frac{\ln E\Lambda^{q'}(\mathbb{0})}{\ln b} \bigg)\bigg)^\gamma=C\cdot t^{n-1-\gamma \frac{\ln(E\Lambda^{q'}(\mathbb{0}))}{\ln b}}.\] By theorem's assumption, $n-1-\gamma\frac{\ln E\Lambda^{q'}(\mathbb{0})}{\ln b}>-1.$ Hence, the function in \eqref{int_int_rate_lq_fast} is integrable on $[0,C_2].$

Thus,
\begin{equation}\label{rate_lq_even_fast}E|A(\textit{\textbf{t}})-A_m(\textit{\textbf{t}})|^p\leq C  \bigg(\frac{ \prod_{i=1}^nt_i ^{p-1}}{b^{nm}}\bigg)^{1/\gamma}.\end{equation}

Now, let us consider the case when $q$ is not even. Let $p_0\leq p$ is an  even integer such that $q = (1-\theta)p_0+\theta p,\ \theta\in(0,1).$
 By using Lyapunov's inequality $||f||_q^q\leq ||f||_{p_0}^{p_0(1-\theta)}||f||_{p}^{p\theta},$ we obtain
\[ E|A(\textit{\textbf{t}})-A_m(\textit{\textbf{t}})|^{q} \leq \bigg( E|A(\textit{\textbf{t}})-A_m(\textit{\textbf{t}})|^{p_0} \bigg)^{1-\theta} \bigg( E|A(\textit{\textbf{t}})-A_m(\textit{\textbf{t}})|^{p} \bigg)^{\theta}.\] As $p_0$ and $p$ are even, by applying \eqref{rate_lq_even_fast}
\[ E|A(\textit{\textbf{t}})-A_m(\textit{\textbf{t}})|^{q} \leq \bigg(C\frac{ \prod_{i=1}^nt_i^{p_0-1}}{b^{nm}}\bigg)^{\frac{1-\theta}{\gamma}}\bigg(C\frac{\prod_{i=1}^nt_i^{p -1}}{b^{nm}}\bigg)^{\frac{ \theta}{\gamma}} \leq C  \bigg(\frac{\prod_{i=1}^nt_i^{q-1}}{b^{nm}}\bigg)^{1/\gamma},\] which finishes the proof. \end{proof}

\begin{lemma}
\label{int_momnts}
Let Assumption  {\rm \ref{assump_main}} be satisfied, then the following inequality holds for all $l\geq m+1,\ m\in\mathbb{N},$
\[ \int\limits_{(P_n[\mathbb{0},\textit{\textbf{t}}])^q} E\bigg( \displaystyle\prod_{j=1}^{q-1}\prod_{i=m+1}^l \Lambda^{(i)} (b^i \textit{\textbf{s}}_j) \bigg) \displaystyle\prod_{j=1}^{q} d \textit{\textbf{s}}_j \leq \int\limits_{(P_n[\mathbb{0},\textit{\textbf{t}}])^q} E\bigg( \displaystyle\prod_{j=1}^{q}\prod_{i=m+1}^l \Lambda^{(i)} (b^i \textit{\textbf{s}}_j) \bigg) \displaystyle\prod_{j=1}^{q} d \textit{\textbf{s}}_j.\]
\end{lemma}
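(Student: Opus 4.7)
The plan is to reinterpret both sides as moments of the nonnegative auxiliary random variable
\[ \widetilde{A}_{m,l}(\textit{\textbf{t}}) := \int_{P_n[\mathbb{0},\textit{\textbf{t}}]} \prod_{i=m+1}^{l} \Lambda^{(i)}(b^{i}\textit{\textbf{s}}) \, d\textit{\textbf{s}}, \]
which is well defined and nonnegative by Assumption \ref{assump_main}. Using Tonelli's theorem (justified by nonnegativity) together with the product rearrangement $\prod_{j=1}^{q}\prod_{i=m+1}^{l}\Lambda^{(i)}(b^{i}\textit{\textbf{s}}_j) = \prod_{j=1}^{q}\bigl(\prod_{i=m+1}^{l}\Lambda^{(i)}(b^{i}\textit{\textbf{s}}_j)\bigr)$, I would identify the right-hand side of the claim as $E\widetilde{A}_{m,l}(\textit{\textbf{t}})^{q}$. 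Because the integrand on the left-hand side is independent of $\textit{\textbf{s}}_q$, the $\textit{\textbf{s}}_q$-integration simply contributes the volume factor $\prod_{i=1}^{n}t_i$, and the remaining $(q-1)$-fold integral rearranges, by the same Fubini-type argument, into $E\widetilde{A}_{m,l}(\textit{\textbf{t}})^{q-1}$.

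Next, I would use the independence of the copies $\Lambda^{(i)}$ across $i$ and the normalisation $E\Lambda(\mathbb{0})=1$ from Assumption \ref{assump_main} to evaluate
\[ E\widetilde{A}_{m,l}(\textit{\textbf{t}}) = \int_{P_n[\mathbb{0},\textit{\textbf{t}}]} \prod_{i=m+1}^{l} E\Lambda^{(i)}(b^{i}\textit{\textbf{s}}) \, d\textit{\textbf{s}} = \prod_{i=1}^{n} t_i. \]
Substituting this identity, the inequality to prove collapses to the purely probabilistic statement
\[ E\widetilde{A}_{m,l}(\textit{\textbf{t}}) \cdot E\widetilde{A}_{m,l}(\textit{\textbf{t}})^{q-1} \leq E\widetilde{A}_{m,l}(\textit{\textbf{t}})^{q}. \]

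I would close the argument by invoking the standard moment inequality $EX \cdot EX^{q-1} \leq EX^{q}$ valid for any nonnegative random variable $X$ and any $q\geq 1$: Jensen's inequality gives $EX \leq (EX^{q})^{1/q}$, while Lyapunov's inequality (equivalently, H\"{o}lder applied to $X^{q-1}\cdot 1$ with conjugate exponents $q/(q-1)$ and $q$) gives $EX^{q-1} \leq (EX^{q})^{(q-1)/q}$, and multiplying the two bounds yields the desired inequality. There is no serious obstacle; the only delicate point is ensuring that each application of Tonelli/Fubini is legitimate, which follows from the nonnegativity of the $\Lambda^{(i)}$ and the implicit finiteness of the moments that makes both sides of the claim meaningful.
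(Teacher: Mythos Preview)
Your proposal is correct and follows essentially the same approach as the paper's proof: both rewrite the two sides as $\bigl(\prod_i t_i\bigr)\,E\widetilde{A}_{m,l}(\textit{\textbf{t}})^{q-1}$ and $E\widetilde{A}_{m,l}(\textit{\textbf{t}})^{q}$, identify $\prod_i t_i = E\widetilde{A}_{m,l}(\textit{\textbf{t}})$ via $E\Lambda(\mathbb{0})=1$, and then close with the pair of Jensen/Lyapunov moment inequalities $EX\le (EX^q)^{1/q}$ and $EX^{q-1}\le (EX^q)^{(q-1)/q}$. The only cosmetic difference is that the paper manipulates the resulting fraction $\prod_i t_i\cdot EZ^q/(EZ^q)^{1/q}$ before bounding the denominator by $EZ$, whereas you multiply the two moment bounds directly; the content is identical.
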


\begin{proof}
Let us consider the representation
\[ \int\limits_{(P_n[\mathbb{0},\textit{\textbf{t}}])^q} E\bigg( \displaystyle\prod_{j=1}^{q-1}\prod_{i=m+1}^l \Lambda^{(i)} (b^i \textit{\textbf{s}}_j) \bigg) \displaystyle\prod_{j=1}^{q} d \textit{\textbf{s}}_j= \displaystyle\prod_{j=1}^{n}t_j \int\limits_{(P_n[\mathbb{0},\textit{\textbf{t}}])^{q-1}} E\bigg( \displaystyle\prod_{j=1}^{q-1}\prod_{i=m+1}^l \Lambda^{(i)} (b^i \textit{\textbf{s}}_j) \bigg) \displaystyle\prod_{j=1}^{q-1} d \textit{\textbf{s}}_j \]
\[ = \displaystyle\prod_{j=1}^{n}t_j  E\bigg( \int\limits_{P_n[\mathbb{0},\textit{\textbf{t}}]} \displaystyle\prod_{i=m+1}^{l}  \Lambda^{(i)} (b^i \textit{\textbf{s}})d \textit{\textbf{s}} \bigg)^{q-1}.\] Then, by applying Jensen's inequality twice, one obtains its upper bound
\[ \displaystyle\prod_{j=1}^{n}t_j  \bigg( E\bigg( \int\limits_{P_n[\mathbb{0},\textit{\textbf{t}}]} \displaystyle\prod_{i=m+1}^{l}  \Lambda^{(i)} (b^i \textit{\textbf{s}})d \textit{\textbf{s}} \bigg)^{q}\bigg)^{\frac{q-1}{q}} = \frac{\displaystyle\prod_{j=1}^{n}t_j E\bigg( \int\limits_{P_n[\mathbb{0},\textit{\textbf{t}}]} \displaystyle\prod_{i=m+1}^{l}  \Lambda^{(i)} (b^i \textit{\textbf{s}})d \textit{\textbf{s}} \bigg)^{q} }{\bigg( E\bigg( \displaystyle\int\limits_{P_n[\mathbb{0},\textit{\textbf{t}}]} \displaystyle\prod_{i=m+1}^{l}  \Lambda^{(i)} (b^i \textit{\textbf{s}})d \textit{\textbf{s}} \bigg)^{q}\bigg)^{1/q}}\]
\[ \leq \frac{\displaystyle\prod_{j=1}^{n}t_jE\bigg( \int\limits_{P_n[\mathbb{0},\textit{\textbf{t}}]} \displaystyle\prod_{i=m+1}^{l}  \Lambda^{(i)} (b^i \textit{\textbf{s}})d \textit{\textbf{s}} \bigg)^{q} }{ E\bigg( \displaystyle\int\limits_{P_n[\mathbb{0},\textit{\textbf{t}}]} \displaystyle\prod_{i=m+1}^{l}  \Lambda^{(i)} (b^i \textit{\textbf{s}})d \textit{\textbf{s}} \bigg)} = \int\limits_{(P_n[\mathbb{0},\textit{\textbf{t}}])^q} E\bigg( \displaystyle\prod_{j=1}^{q}\prod_{i=m+1}^l \Lambda^{(i)} (b^i \textit{\textbf{s}}_j) \bigg) \displaystyle\prod_{j=1}^{q} d \textit{\textbf{s}}_j. \] \end{proof}

\begin{corollary}
\label{rate_theorem_q}
Let Assumptions {\rm \ref{assump_main}} and {\rm \ref{assump2}} hold  true and $\rho(\cdot)\geq 1$ for the vector $\textit{\textbf{p}} = (1,1,...,1)$ that has $ {p}$ components, where $p $ is an even integer. Also, let there exist such $x_0$ and $C_1>0$ that for all $x\geq x_0$
\begin{equation*}
\ln(\rho(\mathbb{0},x\mathbb{1},...,(p-1)x\mathbb{1} )) \leq C_1x^{-\alpha}, \ \alpha> n.
\end{equation*}
If
\begin{equation*}
b > \left(E\Lambda^{p}(\mathbb{0})\right)^{1/n},
\end{equation*}
then for all $\textit{\textbf{t}}\in P_n[\mathbb{0},\mathbb{1}]$ and $q\in[2,p]$
\begin{equation*}
E|A(\textit{\textbf{t}})-A_m(\textit{\textbf{t}})|^q\leq C \bigg(\prod\limits_{i=1}^nt_i\bigg)^{q-1}\left(\frac{E\Lambda^{p}(\mathbb{0})}{b^n}\right)^m.
\end{equation*}
\end{corollary}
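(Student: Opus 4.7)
The plan is to adapt the argument of Theorem \ref{rate_theorem_lq_fast} but to bypass the H\"older split in the key step, instead exploiting the stronger polynomial decay $\ln\rho\le C_1 x^{-\alpha}$ (with $\alpha>n$) together with Lemma \ref{int_momnts}. This should upgrade the rate from $b^{-nm/\gamma}$ to the faster $b^{-nm}$.

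First I would reuse the identity
\[E|A_l(\textit{\textbf{t}})-A_m(\textit{\textbf{t}})|^p=\int_{(P_n[\mathbb{0},\textit{\textbf{t}}])^p}\prod_{i=0}^m\rho(b^i\textit{\textbf{s}}_1,\ldots,b^i\textit{\textbf{s}}_p)\,E\!\left[\prod_{j=1}^p\!\left(\prod_{i=m+1}^l\Lambda^{(i)}(b^i\textit{\textbf{s}}_j)-1\right)\right]\prod_{j=1}^p d\textit{\textbf{s}}_j\]
from the proof of Theorem \ref{rate_theorem_lq_fast}, expand the inner product via \eqref{sums_fast}, and use $\rho\ge 1$ as in \eqref{sum_even} to bound the expectation by a sum, over even-size subsets $J\subseteq\{1,\ldots,p\}$, of terms $\prod_{i=m+1}^l\rho(\{b^i\textit{\textbf{s}}_j\}_{j\in J})-1$. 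For the prefactor I would use the crude pointwise bound $\prod_{i=0}^m\rho(b^i\textit{\textbf{s}}_1,\ldots,b^i\textit{\textbf{s}}_p)\le(E\Lambda^p(\mathbb{0}))^{m+1}$, which generates the $(E\Lambda^p(\mathbb{0}))^m$ factor in the claimed rate. Then, for each subset $J$, I would apply Lemma \ref{int_momnts} iteratively $p-|J|$ times to promote the partial $|J|$-point product up to the full $p$-point one, reducing all $J$-integrals to the single integral $\int_{(P_n[\mathbb{0},\textit{\textbf{t}}])^p}\!\left(\prod_{i=m+1}^l\rho(b^i\textit{\textbf{s}}_1,\ldots,b^i\textit{\textbf{s}}_p)-1\right)d\textit{\textbf{s}}$.

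The core technical step is to show that, after the hyperspherical-coordinate reduction from the proof of Theorem \ref{th2} (which preserves the $\prod-1$ form since both products remain $\ge 1$), the resulting one-dimensional integral is $O(b^{-nm})$. I would split the domain at $x=b^{-m}$. For $x\ge b^{-m}$, the polynomial decay of $\ln\rho$ combined with $e^y-1\le y\,e^y$ yields $\prod_{i=m+1}^\infty\rho-1\le C(b^m x)^{-\alpha}$, and since $\alpha>n$ the contribution is $O(b^{-nm})$. For $x<b^{-m}$, I would bound $\rho$ by $E\Lambda^p(\mathbb{0})$ on the scales $i<\log_b(1/x)$ and by the polynomial estimate beyond, obtaining $\prod_{i=m+1}^\infty\rho\le C(b^m x)^{-\log_b E\Lambda^p(\mathbb{0})}$; the hypothesis $b>(E\Lambda^p(\mathbb{0}))^{1/n}$, equivalent to $\log_b E\Lambda^p(\mathbb{0})<n$, is precisely what makes this part integrable at $0$ and again delivers $O(b^{-nm})$. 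Letting $l\to\infty$ by monotone convergence (partial products are monotone in $l$ since $\rho\ge 1$) and summing the finitely many $J$-terms completes the case of even $p$.

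Finally, to handle non-even $q\in[2,p]$ I would interpolate via Lyapunov's inequality between two even exponents $p_0<q<p$, exactly as at the end of the proof of Theorem \ref{rate_theorem_lq_fast}; rerunning the above argument with $p_0$ in place of $p$ produces a bound with $E\Lambda^{p_0}(\mathbb{0})\le E\Lambda^p(\mathbb{0})$, so both bounds share the common factor $(E\Lambda^p(\mathbb{0})/b^n)^m$ and the interpolation assembles into $C(\prod t_i)^{q-1}(E\Lambda^p(\mathbb{0})/b^n)^m$. The hard part will be the two-regime 1D analysis described above: both contributions must balance to exactly $b^{-nm}$, and this cancellation is precisely why both conditions $\alpha>n$ and $b>(E\Lambda^p(\mathbb{0}))^{1/n}$ have to appear simultaneously.
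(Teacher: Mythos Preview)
Your proposal is correct and follows essentially the same route as the paper's proof: extract the factor $(E\Lambda^p(\mathbb{0}))^m$ by the pointwise bound on $\prod_{i=0}^m\rho$ (the paper phrases this as the H\"older inequality $\|fg\|_1\le\|f\|_\infty\|g\|_1$), expand the tail expectation via \eqref{sums_fast}--\eqref{sum_even}, use Lemma~\ref{int_momnts} iteratively to promote each partial product to the full $p$-point one, reduce to a one-dimensional integral, and then finish by Lyapunov interpolation. The only cosmetic difference is that the paper first changes variables $t=b^m u$ to factor out $b^{-nm}$ and then shows the remaining integral is bounded uniformly in $m$ (reusing the $\gamma=1$ version of the analysis around \eqref{int_int_rate_lq_fast}), whereas you split at $x=b^{-m}$ and show each piece is $O(b^{-nm})$; these are the same computation in different coordinates.
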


\begin{proof} The main steps of the proof are similar to the proof of Theorem \ref{rate_theorem_lq_fast}. By applying H\"{o}lder's inequality $||fg||_1\leq||f||_\infty||g||_1 $ in \eqref{ints_holder_lq_fast}, one gets
\[ E|A_l(\textit{\textbf{t}})-A_m(\textit{\textbf{t}})|^{p}\leq \prod_{i=0}^{m} \esssup\limits_{\textit{\textbf{s}}_j\in P_n[\mathbb{0},\textit{\textbf{t}}] ,\ j=\overline{1,p}} \bigg\{ E \prod_{j=1}^{p}\Lambda^{(i)}(b^i\textit{\textbf{s}}_j) \bigg\}\]
\begin{equation}\label{i_1} \times \int\limits_{(P_n[\mathbb{0},\textit{\textbf{t}}])^{p}}  E\bigg(\prod_{j=1}^{p}\bigg( \displaystyle \prod_{i=m+1}^{l} \Lambda^{(i)}(b^i\textit{\textbf{s}}_j) - 1\bigg)\bigg)\prod_{j=1}^{p}d\textit{\textbf{s}}_j \end{equation}
\begin{equation}
\label{higher_moments_int}
\leq  (E\Lambda^{p}(\mathbb{0}))^m \int\limits_{(P_n[\mathbb{0}, \textit{\textbf{t}}])^{p}}E\bigg(\prod_{j=1}^{p}\bigg( \displaystyle \prod_{i=m+1}^{l} \Lambda^{(i)}(b^i\textit{\textbf{s}}_j) - 1\bigg)\bigg)\prod_{j=1}^{p}d\textit{\textbf{s}}_j.
\end{equation}  Then, analogously to \eqref{sum_even} the expression in \eqref{higher_moments_int} is bounded by
\[ (E\Lambda^{p}(\mathbb{0}))^m \sum_{{\substack{(i_1,i_2,..,i_{p}) \\ i_1+...+i_{p} \in \overline{\mathbb{N}}_o}}} \int\limits_{(P_n[\mathbb{0}, \textit{\textbf{t}}])^{p}} \bigg( \prod_{i=m+1}^{l}E\bigg(  \prod_{j=1}^{p}\left(\Lambda^{(i)}(b^i\textit{\textbf{s}}_j) \right)^{i_j}\bigg)-1 \bigg) \prod_{j=1}^{p}d\textit{\textbf{s}}_j.\] By applying Lemma \ref{int_momnts} to the above expression, and as $\rho(\cdot)\geq 1,$ one can see that the latter is majorized by
\[ 2^{{p}-1} (E\Lambda^{p}(\mathbb{0}))^m \int\limits_{(P_n[\mathbb{0}, \textit{\textbf{t}}])^{p}} \bigg( \prod_{i=m+1}^{l}E\bigg(  \prod_{j=1}^{p} \Lambda^{(i)}(b^i\textit{\textbf{s}}_j)  \bigg)-1 \bigg) \prod_{j=1}^{p}d\textit{\textbf{s}}_j\]
\begin{equation}\label{rho_tilde} \leq 2^{{p}-1} (E\Lambda^{p}(\mathbb{0}))^m \int\limits_{(P_n[\mathbb{0}, \textit{\textbf{t}}])^{p}} \bigg( \prod_{i=m+1}^{\infty}\rho(b^i\textit{\textbf{s}}_1,b^i\textit{\textbf{s}}_2,...,b^i\textit{\textbf{s}}_{p})-1 \bigg) \prod_{j=1}^{p}d\textit{\textbf{s}}_j.\end{equation} Hence,
\[ E|A(\textit{\textbf{t}})-A_m(\textit{\textbf{t}})|^{p} \leq C (E\Lambda^{p}(\mathbb{0}))^m \int\limits_{(P_n[\mathbb{0}, \textit{\textbf{t}}])^{p}} \bigg( \prod_{i=m+1}^{\infty}\rho(b^i\textit{\textbf{s}}_1,b^i\textit{\textbf{s}}_2,...,b^i\textit{\textbf{s}}_{p})-1 \bigg) \prod_{j=1}^{p}d\textit{\textbf{s}}_j.\]
Now, using the inequality \eqref{rate_lq_fast_mix} for $\sum_{j=1}^pi_j=p,$ i.e.
\[\rho(b^i\textit{\textbf{s}}_1,b^i\textit{\textbf{s}}_2,...,b^i\textit{\textbf{s}}_{p}) \] \[\leq \rho\bigg(\mathbb{0}, \frac{b^i\min\limits_{\substack{l,h: l\neq h}}||\textit{\textbf{s}}_l-\textit{\textbf{s}}_h||\mathbb{1}}{({p}-1)\sqrt{n}},\frac{2b^i\min\limits_{\substack{l,h: l\neq h}}||\textit{\textbf{s}}_l-\textit{\textbf{s}}_h||\mathbb{1}}{({p}-1)\sqrt{n}},...,\frac{(p-1)b^i\min\limits_{\substack{l,h: p\neq h}}||\textit{\textbf{s}}_l-\textit{\textbf{s}}_h||\mathbb{1}}{({p}-1)\sqrt{n}}  \bigg)\] and $\gamma=1,$ as in the proof of Theorem \ref{rate_theorem_lq_fast}, we obtain that

\begin{equation*} E|A(\textit{\textbf{t}})-A_m(\textit{\textbf{t}})|^{p} \leq C\bigg(\prod\limits_{i=1}^nt_i\bigg)^{{p}-1}\bigg(\frac{E\Lambda^{p}(\mathbb{0})}{b^n}\bigg)^m.\end{equation*}
The result for $q\in[2,p]$ follows by Lyapunov's inequality, which finishes the proof. \end{proof}

The obtained results can also be reformulated in terms of the function $\widetilde{\rho}(\cdot).$ Namely, the following statement holds true.

\begin{corollary} \label{cor_widetilde_rho}
Let Assumptions {\rm \ref{assump_main}} and {\rm \ref{assump2'}} be satisfied, and $\widetilde{\rho}(\cdot)\geq 1$ for the vector $\textit{\textbf{p}} = (1,1,...,1)$ that has $ {p}$ components, where $p$ is an even integer. Also, let there exist such $x_0$ and $C_1>0$ that for all $x\geq x_0$

\begin{equation*}
\ln(\widetilde{\rho}(\mathbb{0},x\mathbb{1},...,(p-1)x\mathbb{1} )) \leq C_1x^{-\alpha}, \ \alpha>n.
\end{equation*}
If
\begin{equation*}
b > \left(\widetilde{\rho}(\mathbb{0},\mathbb{0},...,\mathbb{0})\right)^{1/n},
\end{equation*}
then for all $\textit{\textbf{t}}\in P_n[\mathbb{0},\mathbb{1}]$ and $q\in[2,p]$
\begin{equation*}
E|A(\textit{\textbf{t}})-A_m(\textit{\textbf{t}})|^q\leq C \bigg(\prod\limits_{i=1}^nt_i\bigg)^{q-1}\left(\frac{\widetilde{\rho}(\mathbb{0},\mathbb{0},...,\mathbb{0})}{b^n}\right)^m.
\end{equation*}

\end{corollary}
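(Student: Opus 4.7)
The plan is to mimic the proof of Corollary \ref{rate_theorem_q} step by step, with $\widetilde{\rho}$ playing the role of $\rho$ throughout. The key observation is that Assumption \ref{assump2'} both upper-bounds the mixed moment $E\prod_j \Lambda^{p_j}(\textbf{u}_j)$ by $\widetilde{\rho}$ and gives $\widetilde{\rho}$ exactly the same monotonicity in pairwise distances that Assumption \ref{assump2} gives to $\rho$. Consequently every geometric majorization used in the earlier rate proof transcribes verbatim with $\widetilde{\rho}$ in place of $\rho$.

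I would start from the same decomposition as in \eqref{ints_holder_lq_fast} and apply the H\"older inequality $\|fg\|_1\leq\|f\|_\infty\|g\|_1$ to split off the first $m$ factors. In the essential supremum step, rather than writing $E\prod_j \Lambda^{(i)}(b^i\textbf{s}_j) = \rho(b^i\textbf{s}_1,\ldots,b^i\textbf{s}_p)\leq \rho(\mathbb{0},\ldots,\mathbb{0})=E\Lambda^p(\mathbb{0})$, I would use Assumption \ref{assump2'} to write
\[
E\prod_{j=1}^{p}\Lambda^{(i)}(b^i\textbf{s}_j) \leq \widetilde{\rho}(b^i\textbf{s}_1,\ldots,b^i\textbf{s}_p) \leq \widetilde{\rho}(\mathbb{0},\mathbb{0},\ldots,\mathbb{0}),
\]
where the second inequality follows by monotonicity of $\widetilde{\rho}$ when all coordinates collapse to the origin. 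This produces the multiplicative factor $\bigl(\widetilde{\rho}(\mathbb{0},\ldots,\mathbb{0})\bigr)^m$ in place of $\bigl(E\Lambda^p(\mathbb{0})\bigr)^m$, which is precisely the quantity appearing in the target bound.

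For the remaining tail integral I would proceed exactly as before: expand the inner expectation via the parity identity \eqref{sums_fast}, apply Lemma \ref{int_momnts} (whose proof relies only on Assumption \ref{assump_main} and Jensen's inequality, so it applies unchanged), and then bound the resulting full $p$-point mixed moment by a product of $\widetilde{\rho}$-factors using Assumption \ref{assump2'}. The diagonal-majorization step sending $\widetilde{\rho}(b^i\textbf{s}_1,\ldots,b^i\textbf{s}_p)$ to $\widetilde{\rho}$ evaluated on equidistant points $(\mathbb{0},c\mathbb{1},2c\mathbb{1},\ldots,(p-1)c\mathbb{1})$ with $c=b^i\min_{l\ne h}\|\textbf{s}_l-\textbf{s}_h\|/((p-1)\sqrt{n})$ is justified by the same monotonicity argument as in the proof of Theorem \ref{rate_theorem_lq_fast}.

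Finally, after passing to hyperspherical coordinates and substituting $t=b^m u/((p-1)\sqrt{n})$, the tail integral is shown to be finite: integrability at infinity comes from the hypothesis $\ln\widetilde{\rho}(\mathbb{0},x\mathbb{1},\ldots,(p-1)x\mathbb{1})\leq C_1 x^{-\alpha}$ with $\alpha>n$, and integrability near zero comes from the condition $b>\widetilde{\rho}(\mathbb{0},\ldots,\mathbb{0})^{1/n}$, mirroring exactly the two-case analysis in the proof of Theorem \ref{th2}. Combining everything yields the bound for even $p$, and the extension to arbitrary $q\in[2,p]$ follows from Lyapunov's inequality. I do not foresee any genuinely new obstacle; the only care needed is bookkeeping to make sure that every inequality in the $\rho$-proof that used Assumption \ref{assump2} is replaced by the corresponding one from Assumption \ref{assump2'}, and that the constant $\widetilde{\rho}(\mathbb{0},\mathbb{0},\ldots,\mathbb{0})$ (rather than $E\Lambda^p(\mathbb{0})$) appears in the final geometric rate.
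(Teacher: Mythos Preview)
Your proposal is correct and follows essentially the same approach as the paper: the paper's own proof of this corollary simply says to repeat the proof of Corollary~\ref{rate_theorem_q} with two modifications, replacing the bounds at \eqref{i_1} and \eqref{rho_tilde} by the $\widetilde{\rho}$-based inequalities coming from Assumption~\ref{assump2'}. You have identified precisely these two substitution points (the essential supremum step and the pointwise bound on the mixed moment inside the tail integral), so there is nothing to add.
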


\begin{proof}
The proof is analogous to the proof of Corollary {\rm  \ref{rate_theorem_q}}. Two modifications of the estimates in {\rm \eqref{i_1}} and {\rm \eqref{rho_tilde}} are required. Namely, the estimates should be written in terms of the \mbox{function $\widetilde{\rho}(\cdot)$} by using the inequalities
\[ \esssup\limits_{\textit{\textbf{s}}_j\in P_n[\mathbb{0},\textit{\textbf{t}}] ,\ j=\overline{1,p}} \bigg\{ E \prod_{j=1}^{p}\Lambda^{(i)}(b^i\textit{\textbf{s}}_j) \bigg\} \leq  \widetilde{\rho}(\mathbb{0},\mathbb{0},...,\mathbb{0}), \]  and
\[ E\bigg(  \prod_{j=1}^{p} \Lambda^{(i)}(b^i\textit{\textbf{s}}_j)  \bigg) \leq \widetilde{\rho}(b^i\textit{\textbf{s}}_1,b^i\textit{\textbf{s}}_2,...,b^i\textit{\textbf{s}}_{p}).\]\vspace{-1.2cm}

\
{}
\end{proof}

\begin{corollary}
For $\gamma<\frac{n}{n-\log_bE\Lambda^p(\mathbb{0})}$ the rate of convergence in Theorem {\rm\ref{rate_theorem_lq_fast}} is faster than in Corollary~{\rm\ref{rate_theorem_q}}, and vice versa for $\gamma>\frac{n}{n-\log_bE\Lambda^p(\mathbb{0})}.$
\end{corollary}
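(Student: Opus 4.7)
The plan is a direct comparison of the two upper bounds as functions of $m$. My first step is to isolate the $m$-dependence in each: the bound from Theorem~\ref{rate_theorem_lq_fast} equals $C_1(\textit{\textbf{t}})\cdot(b^{-n/\gamma})^m$, and the bound from Corollary~\ref{rate_theorem_q} equals $C_2(\textit{\textbf{t}})\cdot(E\Lambda^p(\mathbb{0})/b^n)^m$, with prefactors $C_1(\textit{\textbf{t}})$, $C_2(\textit{\textbf{t}})$ depending on the spatial point but not on $m$. The hypothesis $b>(E\Lambda^p(\mathbb{0}))^{1/n}$ (imposed in both results) forces $\log_bE\Lambda^p(\mathbb{0})<n$, and since $\gamma>1$ both bases lie strictly in $(0,1)$. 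Hence the question of which bound decays faster in $m$ is, up to a multiplicative constant independent of $m$, the question of which of these two bases is smaller.

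The second step is the elementary inequality
\begin{equation*}
b^{-n/\gamma}<\frac{E\Lambda^p(\mathbb{0})}{b^n}.
\end{equation*}
Taking logarithms base $b$ converts this into $-n/\gamma<\log_bE\Lambda^p(\mathbb{0})-n$, i.e.\ $n-\log_bE\Lambda^p(\mathbb{0})<n/\gamma$. Because $n-\log_bE\Lambda^p(\mathbb{0})>0$, dividing preserves the inequality and yields exactly
\begin{equation*}
\gamma<\frac{n}{n-\log_bE\Lambda^p(\mathbb{0})},
\end{equation*}
which is the threshold in the statement. The converse direction is obtained by reversing every inequality in the chain; strictness is preserved throughout, so there is no boundary case to separate.

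There is essentially no genuine obstacle; the argument is a one-line logarithmic calculation. The only point worth flagging is that the comparison reflects the geometric decay rate in $m$ only: the $\textit{\textbf{t}}$-dependent prefactors $C_1(\textit{\textbf{t}})$ and $C_2(\textit{\textbf{t}})$ are uniformly bounded in $m$, so they influence the pre-asymptotic size of the bounds but not the threshold value of $\gamma$ at which one rate overtakes the other.
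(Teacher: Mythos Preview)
Your proposal is correct and follows essentially the same approach as the paper: both arguments isolate the $m$-dependent geometric factors $b^{-nm/\gamma}$ and $(E\Lambda^p(\mathbb{0})/b^n)^m$, take logarithms, and reduce the comparison to the elementary inequality $n/\gamma\gtrless n-\log_bE\Lambda^p(\mathbb{0})$. Your version is slightly more explicit about why both bases lie in $(0,1)$ and why the $\textit{\textbf{t}}$-dependent prefactors are irrelevant, but the substance is identical.
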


\begin{proof}
To compare the rates of convergences, one needs to compare the terms $b^{-\frac{nm}{\gamma}}$ and $\frac{(E\Lambda^p(\mathbb{0}))^m}{b^{nm}},$ as $m\to\infty.$ Let us rewrite the second rate as
\[ \frac{(E\Lambda^p(\mathbb{0}))^m}{b^{nm}} = b^{m\log_bE\Lambda^p(\mathbb{0})- nm} . \]
Thus, $b^{-\frac{nm}{\gamma}} < \frac{(E\Lambda^p(\mathbb{0}))^m}{b^{nm}},$ as $m\to\infty,$ if $\frac{nm}{\gamma}>nm- m\log_bE\Lambda^p(\mathbb{0}),$ which is equivalent to $\gamma<\frac{n}{n-\log_bE\Lambda^p(\mathbb{0})}.$ On the other hand, $b^{-\frac{nm}{\gamma}} > \frac{(E\Lambda^p(\mathbb{0}))^m}{b^{nm}},$ if $\gamma>\frac{n}{n-\log_bE\Lambda^p(\mathbb{0})}.$\end{proof}

Using analogous calculations one obtains rates of convergence for the random measures $\mu_m(\cdot)$ in the spaces $L_q.$
\begin{corollary}
Let the conditions of Theorem {\rm \ref{rate_theorem_lq_fast}} or Corollary {\rm \ref{rate_theorem_q}} be satisfied. Then, for the random measure and Borel subsets  $B \in \mathfrak{B}$ defined in Corollary {\rm \ref{measure}},  the corresponding rates of convergence hold true
\[E|\mu(B)-\mu_m(B)|^q \leq  C  \bigg(\frac{ |B| ^{q-1}}{b^{nm}}\bigg)^{1/\gamma}\] or
\[ E|\mu(B)-\mu_m(B)|^q \leq C |B|^{q-1}\left(\frac{E\Lambda^{p}(\mathbb{0})}{b^n}\right)^m.\]
\end{corollary}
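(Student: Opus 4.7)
The plan is to mimic the proofs of Theorem \ref{rate_theorem_lq_fast} and Corollary \ref{rate_theorem_q}, replacing the hyperparallelepiped $P_n[\mathbb{0},\textit{\textbf{t}}]$ throughout by the Borel set $B$. First, I would invoke Corollary \ref{measure} and Remark \ref{limits}: for each $B\in\mathfrak{B}$ the sequence $\{\mu_m(B),\ m\geq 1\}$ is a nonnegative martingale converging to $\mu(B)$ in $L_p$ as $m\to\infty$. As in \eqref{rate_lq_eq1_proof}, this yields $E|\mu(B)-\mu_m(B)|^p=\lim_{l\to\infty}E|\mu_l(B)-\mu_m(B)|^p$, so it suffices to bound $E|\mu_l(B)-\mu_m(B)|^p$ uniformly in $l>m$ with the claimed rate in $m$.

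Next, expanding the difference and applying Fubini's theorem exactly as in \eqref{ints_holder_lq_fast}, I would obtain
\[E|\mu_l(B)-\mu_m(B)|^p=\int_{B^p}\prod_{i=0}^m\rho(b^i\textit{\textbf{s}}_1,\ldots,b^i\textit{\textbf{s}}_p)\cdot E\Bigl(\prod_{j=1}^p\Bigl(\prod_{i=m+1}^l\Lambda^{(i)}(b^i\textit{\textbf{s}}_j)-1\Bigr)\Bigr)\prod_{j=1}^p d\textit{\textbf{s}}_j.\]
To this I would apply either H\"older's inequality with conjugate exponents $\gamma/(\gamma-1)$ and $\gamma$ (under the hypotheses of Theorem \ref{rate_theorem_lq_fast}) or the bound $\|fg\|_1\leq\|f\|_\infty\|g\|_1$ combined with Lemma \ref{int_momnts} (under the hypotheses of Corollary \ref{rate_theorem_q}). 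The prefactor produced by these steps gives precisely the $b^{-nm/\gamma}$ or $(E\Lambda^p(\mathbb{0})/b^n)^m$ decay, while the remaining integral has the form
\[\int_{B^p}\Bigl(\prod_{i=m+1}^\infty\rho(b^i\textit{\textbf{s}}_1,\ldots,b^i\textit{\textbf{s}}_p)-1\Bigr)^\gamma\prod_{j=1}^p d\textit{\textbf{s}}_j\]
with $\gamma\geq 1$.

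The key geometric reduction is then as follows. For each pair $(l,h),\ l\neq h,$ I would bound the integrand above, using Assumption \ref{assump2}, by the value of $\rho$ at the equidistant configuration along the diagonal with spacing $b^i\|\textit{\textbf{s}}_l-\textit{\textbf{s}}_h\|/((p-1)\sqrt n)$, integrate out the $p-2$ remaining variables over $B$ to pick up a factor $|B|^{p-2}$, and perform the change of variables $\textit{\textbf{s}}=\textit{\textbf{s}}_l-\textit{\textbf{s}}_h$. Since
\[\int_{B\times B}f(\textit{\textbf{s}}_l-\textit{\textbf{s}}_h)\,d\textit{\textbf{s}}_l\,d\textit{\textbf{s}}_h\leq|B|\int_{B\circleddash B}f(\textit{\textbf{s}})\,d\textit{\textbf{s}},\]
this contributes an extra factor $|B|$, giving the total $|B|^{p-1}$. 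Because $B\subseteq P_n[\mathbb{0},\mathbb{1}]$, the Minkowski difference satisfies $B\circleddash B\subseteq\{\textit{\textbf{s}}:\|\textit{\textbf{s}}\|\leq\sqrt n\}$, so after passing to hyperspherical coordinates the radial integral is exactly the one already analysed in Section \ref{rate_lq_sec}. This proves both inequalities for even $p=q$, and the extension to arbitrary $q\in[2,p]$ follows by Lyapunov's inequality as at the end of the two parent proofs.

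The argument is essentially mechanical, and the only point requiring a small check is the replacement $\prod_{i=1}^n t_i\mapsto|B|$, which is supplied by the two bounds above (factor $|B|^{p-2}$ from the free variables and factor $|B|$ from the translation estimate). No new analytic difficulty arises: the delicate control of the radial integrals near $0$ and $\infty$ is already carried out in the proofs of Theorem \ref{rate_theorem_lq_fast} and Corollary \ref{rate_theorem_q} and depends only on the containment $B\subseteq P_n[\mathbb{0},\mathbb{1}]$, not on the shape of $B$.
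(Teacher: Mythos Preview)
Your proposal is correct and follows exactly the approach the paper intends: the paper offers no separate proof for this corollary beyond the one-line remark ``Using analogous calculations one obtains rates of convergence for the random measures $\mu_m(\cdot)$ in the spaces $L_q$,'' and you have supplied precisely those analogous calculations, correctly identifying that the only substantive change is the replacement $\prod_i t_i\mapsto|B|$ via the bound $\int_{B\times B}f(\textit{\textbf{s}}_l-\textit{\textbf{s}}_h)\,d\textit{\textbf{s}}_l\,d\textit{\textbf{s}}_h\leq|B|\int_{B\circleddash B}f(\textit{\textbf{s}})\,d\textit{\textbf{s}}$ together with $B\circleddash B\subseteq\{\|\textit{\textbf{s}}\|\leq\sqrt n\}$. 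One minor point worth noting explicitly is that Lemma~\ref{int_momnts} is stated only for $P_n[\mathbb{0},\textit{\textbf{t}}]$, but its proof goes through verbatim with $P_n[\mathbb{0},\textit{\textbf{t}}]$ replaced by $B$ and $\prod_j t_j$ replaced by $|B|$, so your appeal to it under the hypotheses of Corollary~\ref{rate_theorem_q} is justified.
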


\section{Scaling of moments and the R{\'e}nyi function.}
\label{sec3}
This section obtains estimates of moments of $A(\cdot).$ Then, these estimates are used to calculate the R{\'e}nyi function $T(\cdot)$ of the measure $\mu(\cdot)$.

The R{\'e}nyi function of the random measure $\mu(\cdot)$ is defined via moments $E\left(\mu\left(B^{(k)}_l\right)\right)^q,$ where $B^{(k)}_l$ are the hypercubes forming the dyadic decompositions of $P_n[\mathbb{0}, \mathbb{1}].$ It follows from the definition of the measures $\mu_m(\cdot)$ that the limiting measure $\mu(\cdot)$ is homogeneous, i.e. $\mu\left(B^{(j)}_i\right) \overset{d}{=} \mu\left(B^{(j)}_l\right),\ i,l=0,1,...,2^{nj}-1, \ j=1,2,...$.  Hence, to derive the R{\'e}nyi function of $\mu(\cdot)$ it is enough to study the moments $E\left(\mu(P_n[\mathbb{0}, \textit{\textbf{t}}])\right)^q,$ where $\textit{\textbf{t}} = (t,t,...,t), \ t\in[0,1].$

In the following we assume that $\Lambda(\cdot)$ is strictly homogeneous, i.e. its finite-fimensional distributions are invariant with respect to shifts.

\begin{lemma}
\label{lemma_moments}
Let the conditions of Theorem {\rm \ref{th2}} hold true, i.e. $A(\textit{\textbf{t}})\in L_q,\ q\geq 0,$ $\textit{\textbf{t}} \in P_n[\mathbb{0},\mathbb{1}].$
If for $ q\in(0,1)$ the function $\rho(\mathbb{0}, \textit{\textbf{x}}, \textit{\textbf{q}}), \ \textit{\textbf{q}}=(q-1,1),$ is nondecreasing in $||\textit{\textbf{x}}||,$ satisfies
\[\sum_{i=1}^{\infty} \ln\left(\frac{\rho(\mathbb{0}, b^{-i}\mathbb{1},\textit{\textbf{q}})}{E\Lambda^q(\mathbb{0})}\right)<\infty,\]  and for $q\geq1$ and some $k$-dimensional vector $\widetilde{\textit{\textbf{p}}}=({q}/{k},..,{q}/{k})$ it holds
\[ \sum_{i=1}^{\infty} \ln\left(\frac{E\Lambda^q(\mathbb{0})}{\rho(\mathbb{0}, b^{-i}\mathbb{1},..., b^{-i}(k-1)\mathbb{1},\widetilde{\textit{\textbf{p}}})}\right) < \infty,\] then, there exist constants $C_1 ,\ C_2$ such that
\begin{equation}
\label{renyi} C_1 t^{n q- \log_bE\Lambda^q(\mathbb{0})} \leq  EA^q(t\mathbb{1}) \leq C_2 t^{n q- \log_bE\Lambda^q(\mathbb{0})},\  t\to0,
\end{equation} and the random variables $A(t\mathbb{1})$ are nondegenerate, that is $P(A(t\mathbb{1})>0)>0.$
\end{lemma}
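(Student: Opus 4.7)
The strategy exploits the self-similar structure of the cumulative field at small scales. Performing the substitution $\textit{\textbf{u}} = b\textit{\textbf{s}}$ in the defining integral of $A(t\mathbb{1})$ yields the scaling identity
\[
A(t\mathbb{1}) = b^{-n} \int_{P_n[\mathbb{0}, bt\mathbb{1}]} \Lambda^{(0)}(\textit{\textbf{u}}/b) \, d\widetilde{\mu}(\textit{\textbf{u}}),
\]
where $\widetilde{\mu}$ is the random measure generated by the shifted sequence $\{\Lambda^{(i+1)}\}_{i\geq 0}$; by construction $\widetilde{\mu}\overset{d}{=}\mu$ and $\widetilde{\mu}$ is independent of $\Lambda^{(0)}$. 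Writing $\widetilde{A}(\textit{\textbf{v}}):=\widetilde{\mu}(P_n[\mathbb{0},\textit{\textbf{v}}])$, one has $\widetilde{A}(bt\mathbb{1})\overset{d}{=}A(bt\mathbb{1})$. The plan is to derive one-step recursions comparing $EA^q(t\mathbb{1})$ with $b^{-nq}E\Lambda^q(\mathbb{0})\cdot EA^q(bt\mathbb{1})$ up to explicit multiplicative corrections, iterate along the geometric sequence $t_m=b^{-m}$, and then use monotonicity of $t\mapsto EA^q(t\mathbb{1})$ to extend the asymptotic to arbitrary $t\to 0$.

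For $q\geq 1$, conditioning on $\widetilde{\mu}$ and applying Jensen's inequality to the convex map $x\mapsto x^q$, in the form $(\int f\,d\widetilde{\mu})^q\leq \widetilde{A}(bt\mathbb{1})^{q-1}\int f^q\,d\widetilde{\mu}$, together with independence of $\Lambda^{(0)}$ and $\widetilde{\mu}$ and the identity $E(\Lambda^{(0)})^q=E\Lambda^q(\mathbb{0})$, produces the upper bound $EA^q(t\mathbb{1})\leq b^{-nq}E\Lambda^q(\mathbb{0})\cdot EA^q(bt\mathbb{1})$. For the matching lower bound I would apply the reverse Jensen inequality for the concave map $x\mapsto x^{q/k}$ (with $k$ integer and $q/k\leq 1$) on the probability measure $d\widetilde{\mu}/\widetilde{A}(bt\mathbb{1})$, obtaining $(\int\Lambda^{(0)}d\widetilde{\mu})^q\geq \widetilde{A}(bt\mathbb{1})^{q-k}(\int(\Lambda^{(0)})^{q/k}d\widetilde{\mu})^k$. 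Expanding the $k$-fold product and using Fubini yields an integral of $\rho(\textit{\textbf{u}}_1/b,\ldots,\textit{\textbf{u}}_k/b,\widetilde{\textit{\textbf{p}}})$ against $d\widetilde{\mu}^{\otimes k}$, and Assumption \ref{assump2} bounds this integrand from below by $\rho_i:=\rho(\mathbb{0},b^{-i}\mathbb{1},\ldots,(k-1)b^{-i}\mathbb{1},\widetilde{\textit{\textbf{p}}})$ at scale $t=b^{-i}$, since the collinear reference configuration has larger pairwise distances than $\{\textit{\textbf{u}}_j/b\}\subset[0,b^{-i}]^n$. This gives $EA^q(t\mathbb{1})\geq b^{-nq}\rho_i\cdot EA^q(bt\mathbb{1})$.

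For $q\in(0,1)$ the roles reverse. Jensen applied to the concave $x\mapsto x^q$ on $d\widetilde{\mu}/\widetilde{A}(bt\mathbb{1})$ immediately delivers $EA^q(t\mathbb{1})\geq b^{-nq}E\Lambda^q(\mathbb{0})\cdot EA^q(bt\mathbb{1})$. The matching upper bound is the delicate step and the main obstacle, since vanilla Jensen only yields the too-weak $EA^q(t\mathbb{1})\leq(EA(t\mathbb{1}))^q=t^{nq}$; extracting the extra factor $(E\Lambda^q(\mathbb{0}))^m$ per iteration is precisely where the two-point function hypothesis enters. I would use the tangent-line inequality $x^q\leq(1-q)y^q+qy^{q-1}x$, valid for all $x,y>0$, with the data-dependent reference $y=\Lambda^{(0)}(\mathbb{0})\widetilde{A}(bt\mathbb{1})$. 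Taking expectation, the constant term contributes $(1-q)E\Lambda^q(\mathbb{0})EA^q(bt\mathbb{1})$ by independence, while the linear term reduces to
\[
q\,E\!\left[\widetilde{A}(bt\mathbb{1})^{q-1}\int \rho(\mathbb{0},\textit{\textbf{u}}/b,(q-1,1))\,d\widetilde{\mu}(\textit{\textbf{u}})\right]\leq q\rho_i\cdot EA^q(bt\mathbb{1}),
\]
where the final estimate uses the assumed monotonicity of $\rho(\mathbb{0},\textit{\textbf{x}},(q-1,1))$ in $\|\textit{\textbf{x}}\|$ together with $\|\textit{\textbf{u}}/b\|\leq t\sqrt{n}=b^{-i}\sqrt{n}$. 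Combining yields $EA^q(t\mathbb{1})\leq b^{-nq}\bigl((1-q)E\Lambda^q(\mathbb{0})+q\rho_i\bigr)EA^q(bt\mathbb{1})$.

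Iterating the appropriate recursion $m$ times at $t=b^{-m}$ produces $EA^q(b^{-m}\mathbb{1})\asymp b^{-nqm}(E\Lambda^q(\mathbb{0}))^m\cdot\Pi_m$, where $\Pi_m$ is a product of correction factors built from the ratios $\rho_i/E\Lambda^q(\mathbb{0})$. The summability hypotheses of the lemma, combined with $EA^q(\mathbb{1})\in(0,\infty)$ (finiteness from Theorem \ref{th2}, positivity from $EA(\mathbb{1})=1>0$ and $A\geq 0$ a.s.), keep $\Pi_m$ bounded away from $0$ and $\infty$ uniformly in $m$: in the $q\in(0,1)$ upper bound one uses $\log(1+q(\rho_i/E\Lambda^q(\mathbb{0})-1))\leq q(\rho_i/E\Lambda^q(\mathbb{0})-1)$, which is summable by the given condition. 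Since $b^{-m(nq-\log_b E\Lambda^q(\mathbb{0}))}=t^{nq-\log_b E\Lambda^q(\mathbb{0})}$ at $t=b^{-m}$, this establishes \eqref{renyi} along the geometric subsequence, and the monotonicity $A(s\mathbb{1})\leq A(t\mathbb{1})$ a.s.\ for $s\leq t$ extends the asymptotic to continuous $t\to 0$ at the cost of a comparison constant depending only on $b$, $n$ and $q$. Nondegeneracy is immediate: $A(t\mathbb{1})\geq 0$ a.s.\ together with $EA(t\mathbb{1})=t^n>0$ force $P(A(t\mathbb{1})>0)>0$.
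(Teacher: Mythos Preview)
Your strategy is correct and genuinely different from the paper's. The paper never writes down a scaling identity for $A$; instead it works throughout with the pre-limit quantities $A_m$, splits the product $\prod_{l=0}^{m}\Lambda^{(l)}$ at the index $m_t=[-\log_b t]$, and uses the sub/super\-martingale property of $A_m^q$ to pass to the limit. For the delicate $q\in(0,1)$ upper bound the paper does \emph{not} use your tangent-line inequality; it applies a reverse H\"older estimate $EX^q\leq (E(XY))^q(EY^p)^{1-q}$ with the specific choice $X=A_{m+1}(t\mathbb{1})$ and $Y=\bigl(A_m^{1-q}(t\mathbb{1})(\Lambda^{(m)}(\mathbb{0}))^{1-q}\bigr)^{-1}$, which produces the multiplicative correction $\bigl(\rho(\mathbb{0},b^{m-m_t}\mathbb{1},\textit{\textbf{q}})/E\Lambda^q(\mathbb{0})\bigr)^q$ rather than your additive one $1+q(\rho_i/E\Lambda^q(\mathbb{0})-1)$; both are controlled by the same summability hypothesis. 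For the $q\geq 1$ lower bound the paper uses the reverse H\"older inequality $\|fg\|_1^q\geq \||f|^{\widetilde{p}}g\|_1^{q/\widetilde{p}}\|g\|_1^{q/\widetilde{q}}$ with $f=\Lambda^{(m)}$, $g=\Lambda_m$ and $q/\widetilde{p}=k\in\mathbb{N}$, which is the H\"older-dual counterpart of your Jensen argument on the random measure. Your route is conceptually cleaner and makes the self-similar structure explicit; the paper's route is more computational but handles arbitrary $t$ directly via $m_t$ without a separate monotonicity step.

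One technical point deserves care. As written you integrate the random function $\Lambda^{(0)}(\cdot/b)$ against the \emph{limit} random measure $\widetilde{\mu}$, and then apply Jensen to $d\widetilde{\mu}/\widetilde{A}$. The paper only establishes $\mu_m(B)\to\mu(B)$ for fixed Borel $B$, not weak convergence of measures, so the identity $A(t\mathbb{1})=b^{-n}\int\Lambda^{(0)}(\textit{\textbf{u}}/b)\,d\widetilde{\mu}(\textit{\textbf{u}})$ and the Jensen step on $\widetilde{\mu}$ are not immediately justified. The clean fix is to run all your inequalities at level $m$ against the absolutely continuous $\widetilde{\mu}_{m-1}$: the recursion then reads $EA_m^q(t\mathbb{1})\lessgtr b^{-nq}(\text{correction})\,EA_{m-1}^q(bt\mathbb{1})$, and iterating $m_0$ steps from level $m$ gives uniform-in-$m$ bounds for $EA_m^q(b^{-m_0}\mathbb{1})$ in terms of $\sup_j EA_j^q(\mathbb{1})<\infty$ (Theorem~\ref{th2}); then let $m\to\infty$ using $L_q$-convergence. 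This is exactly the role the sub/super\-martingale property plays in the paper's argument.
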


\begin{proof}

Martingale properties of $A_m(t\mathbb{1})$ will be used to prove \eqref{renyi}. Two different cases  will be considered as $A^q_m(t\mathbb{1})$ is a submartingale if $q \geq 1$, and it is a supermartingale if $q < 1.$ To obtain the upper bound when $q\geq 1,$ we will derive a  uniform in $m$ estimate from above for $EA^q_m(t\mathbb{1})$. As $A_m(t\mathbb{1})$ converges to $A(t\mathbb{1})$ in the space $L_q$, the same estimate will hold for $EA^q(t\mathbb{1})$.

Let $q\geq 1$ and $m_\textit{\textbf{t}} = [-\log_bt]$ be the largest integer such that $m_\textit{\textbf{t}} \leq -\log_bt.$  Then, by H{\"o}lder's inequality with $1/q+1/p=1$ it follows
\[A_{m+1}^q(t\mathbb{1}) = \left( \int\limits_{P_n[\mathbb{0}, t \mathbb{1}]} \prod_{l=0}^{m_{\textit{\textbf{t}}}-1} \Lambda^{(l)}(b^l\textit{\textbf{u}}) \prod_{l=m_{\textit{\textbf{t}}}}^{m} \Lambda^{(l)}(b^l\textit{\textbf{u}} )d\textit{\textbf{u}} \right)^q \]
\[\leq \left( \int\limits_{P_n[\mathbb{0},t \mathbb{1}]} \left( \prod_{l=0}^{m_{\textit{\textbf{t}}}-1} (\Lambda^{(l)}(b^l\textit{\textbf{u}} ))\right)^q \prod_{l = m_{\textit{\textbf{t}}}}^{m}\Lambda^{(l)}(b^l\textit{\textbf{u}} )  d\textit{\textbf{u}}\right) \left(\int\limits_{P_n[\mathbb{0},t\mathbb{1}]} \prod_{l = m_{\mathfrak{t}}}^{m}\Lambda^{(l)}(b^l\textit{\textbf{u}} )  d\textit{\textbf{u}}\right)^{q/p}.\]
Applying expectations to the both sides we obtain that the expectation $EA^q_{m+1}( t\mathbb{1})$ is bounded by \[\int\limits_{P_n[\mathbb{0},t\mathbb{1}]}\prod_{l=0}^{m_{t}-1}E (\Lambda^{(l)}(b^l\textit{\textbf{u}} ))^q  E\left( \left(\prod_{l = m_{ t}}^{m} \Lambda^{(l)}(b^l\textit{\textbf{u}} ) \right) \left( \int\limits_{P_n[\mathbb{0},t\mathbb{1}]} \prod_{l = m_{ t}}^{m} \Lambda^{(l)}(b^l\textit{\textbf{v}} ) d\textit{\textbf{v}} \right)^{q/p}  \right) d\textit{\textbf{u}}.\] Therefore,
\[ EA^q_{m+1}( t\mathbb{1}) \leq (E\Lambda^q(\mathbb{0}))^{m_t}E\left( \int\limits_{P_n[\mathbb{0},t\mathbb{1}]} \prod_{l = m_{t}}^{m} \Lambda^{(l)}(b^l\textit{\textbf{u}}) d\textit{\textbf{u}} \right)^{1+q/p} \]
\[= (E\Lambda^q(\mathbb{0}))^{m_t}E\left( \int\limits_{P_n[\mathbb{0},t\mathbb{1}]} \prod_{l = 0}^{m-m_{t}} \Lambda^{(l)}(b^{l+m_t}\textit{\textbf{u}}) d\textit{\textbf{u}} \right)^{q} \]
\begin{equation}\label{moments_estimation_upper}=  (E\Lambda^q(\mathbb{0}))^{m_t} \left(b^{-m_t }\right)^{nq} E A_{m-m_t+1}^q(b^{\log_bt + m_{\textit{\textbf{t}}}}\mathbb{1}).\end{equation}
As $(E\Lambda^q(\mathbb{0}))^{m_t} \leq (E\Lambda^q(\mathbb{0}))^{-\log_b t} = t^{-\log_bE\Lambda^q(\mathbb{0})},$ $b^{\log_bt+m_t}\leq 1,$ and
\[ b^{-m_tnq} = b^{(-\log_b t - [-\log_b t] + \log_b t)nq} \leq b^{(1+\log_b t)nq} = b^{nq} t^{nq},\] therefore,
\[ EA^q( t\mathbb{1}) \leq t^{nq-\log_bE\Lambda^q(\mathbb{0})} b^{nq} \sup_{\textit{\textbf{u}}\in P_n[\mathbb{0},\mathbb{1}]}EA^q(\textit{\textbf{u}}).\]

To find the upper bound for $q\in(0,1),$ we will use a recursive estimation for $EA_m^q(t\mathbb{1}).$ From H\"{o}lder's inequality $(EX^q)^{1/q} (EY^p)^{1/p}\leq E(XY)$ for $q\in(0,1), \ p<0,$ it follows that
\[ EX^q \leq \left( \frac{E(XY)}{(EY^p)^{1/p}}\right)^q = (E(XY))^{q} (EY^p)^{1-q}.\] By setting $X=A_{m+1}(t\mathbb{1})$ and $Y=(A_m^{1-q}(t\mathbb{1})(\Lambda^{(m)}(\mathbb{0}))^{1-q})^{-1},$ one obtains
\[ EA_{m+1}^q(t\mathbb{1}) \leq \left( E\left( \frac{A_{m+1}(t\mathbb{1})(\Lambda^{(m)}(\mathbb{0}))^{q-1}}{A_m^{1-q}(t\mathbb{1})} \right) \right)^q\left( EA_{m}^{(q-1)p}(t\mathbb{1}) (\Lambda^{(m)}(\mathbb{0}))^{(q-1)p} \right)^{1-q}.\] Let us consider the first expectation separately
\[E\left(  \frac{A_{m+1}(t\mathbb{1})(\Lambda^{(m)}(\mathbb{0}))^{q-1}}{A_m^{1-q}(t\mathbb{1})} \right) = E\left( \frac{\displaystyle\int_{P_n[\mathbb{0},t\mathbb{1}]}\prod_{i=0}^{m-1} \Lambda^{(i)}(b^i\textit{\textbf{u}})\Lambda^{(m)}(b^m\textit{\textbf{u}})(\Lambda^{(m)}(\mathbb{0}))^{q-1}d\textit{\textbf{u}}}{A_m^{1-q}(t\mathbb{1})} \right)\]
\[ \leq E  A^q_{m}(t\mathbb{1}) \max_{\textit{\textbf{u}} \in P_n[\mathbb{0},t\mathbb{1}] } E(\Lambda^{(m)}(b^m\textit{\textbf{u}})(\Lambda^{(m)}(\mathbb{0}))^{q-1}) \leq E  A^q_{m}(t\mathbb{1}) \rho(\mathbb{0}, b^mt\mathbb{1},\textit{\textbf{q}})\] \[\leq E  A^q_{m}(t\mathbb{1}) \rho(\mathbb{0}, b^{m-m_t }\mathbb{1},\textit{\textbf{q}}),\] where $\textit{\textbf{q}}=(q-1,1).$ The property that $\rho(\mathbb{0}, \textit{\textbf{x}},\textit{\textbf{q}})$ is a nondecreasing function of $||\textit{\textbf{x}}||$ for $q<1$ was used.

Thus,  from the above estimate and $(q-1)p=q$ it follows
\[ EA_{m+1}^q(t\mathbb{1}) \leq (EA^q_{m}(t\mathbb{1}) \rho(\mathbb{0}, b^{m-m_t }\mathbb{1},\textit{\textbf{q}}))^{q} \left( EA_{m}^{(q-1)p}(t\mathbb{1}) E(\Lambda^{(m)}(\mathbb{0}))^{(q-1)p} \right)^{1-q} \]
\[ = EA_{m}^q(t\mathbb{1}) E(\Lambda (\mathbb{0}))^q\left( \frac{\rho(\mathbb{0}, b^{m-m_t}\mathbb{1},\textit{\textbf{q}})}{E(\Lambda (\mathbb{0}))^q} \right)^q.\] By applying this estimate recursively, one gets
\[ EA_{m_t+1}^q(t\mathbb{1}) \leq  EA_{1}^q(t\mathbb{1}) (E(\Lambda(\mathbb{0}))^q)^{m_t}\prod\limits_{i=1}^{m_t} \left( \frac{\rho(\mathbb{0}, b^{i-m_t}\mathbb{1},\textit{\textbf{q}})}{E(\Lambda (\mathbb{0}))^q} \right)^q.\]
As $EA_{1}^q(t\mathbb{1})\leq(EA_{1}(t\mathbb{1}))^q=t^{nq},$ $(E(\Lambda(\mathbb{0}))^q)^{m_t} \leq (E(\Lambda(\mathbb{0}))^q)^{-\log_bt} = t^{-\log_bE\Lambda^q(\mathbb{0})},$
\[ EA^q_{m_t+1}(t\mathbb{1}) \leq t^{nq-\log_bE\Lambda^q(\mathbb{0})} \prod\limits_{i=1}^{m_t} \left( \frac{\rho(\mathbb{0}, b^{-i}\mathbb{1},\textit{\textbf{q}})}{E(\Lambda (\mathbb{0}))^q} \right)^q.\] It follows from $\sum_{i=1}^\infty\ln\left(\frac{\rho(\mathbb{0}, b^{-i}\mathbb{1},\textit{\textbf{q}})}{E(\Lambda (\mathbb{0}))^q)}\right)<\infty$ that $\prod\limits_{i=1}^{m_t} \left( \frac{\rho(\mathbb{0}, b^{-i}\mathbb{1},\textit{\textbf{q}})}{E(\Lambda (\mathbb{0}))^q} \right)^q<C$ and

\[ EA_{m_t+1}^q(t\mathbb{1}) \leq C t^{nq-\log_bE\Lambda^q(\mathbb{0})}.\] As $A_{m}^q(t\mathbb{1})$ is a supermartingale, $EA^q(t\mathbb{1}) \leq EA_{m_t+1}^q(t\mathbb{1}),$ which provides the required estimate from the above inequality.

Now, let us obtain the estimate from below for $q \geq 1.$ Notice, that by H\"{o}lder's inequality $||fg||_1 \geq ||f||_{\widetilde{p}}||g||_{\widetilde{q}}, \ \widetilde{p}>0, \ \widetilde{q}<0,$ it follows

\begin{equation}\label{holder_upper} ||fg||_1^q = ||f|g|^{1/\widetilde{p}}|g|^{1/\widetilde{q}}||_1^q \geq ||f|g|^{1/\widetilde{p}}||^{q}_{\widetilde{p}} |||g|^{1/\widetilde{q}}||^q_{\widetilde{q}}=|||f|^{\widetilde{p}}g||^{q/\widetilde{p}}_1||g||^{q/\widetilde{q}}_1.\end{equation}  Let us also choose $\widetilde{p}$ such that $q/\widetilde{p}=k\in\mathbb{N}$ and apply \eqref{holder_upper} to $f=\Lambda^{(m)}(\textit{\textbf{u}})$ and $g=\Lambda_m(\textit{\textbf{u}}):$
 \[ E A^q_{m+1}(t\mathbb{1}) = E\left( \int\limits_{P_n[\mathbb{0},t\mathbb{1}]} \Lambda_m(\textit{\textbf{u}}) \Lambda^{(m)}(b^m\textit{\textbf{u}}) d \textit{\textbf{u}}\right)^q \]
\[ \geq E\left( \int\limits_{P_n[\mathbb{0},t\mathbb{1}]} (\Lambda^{(m)}(\textit{\textbf{u}}))^{\widetilde{p}} \Lambda_m(\textit{\textbf{u}}) d\textit{\textbf{u}} \right)^{q/\widetilde{p}} \left( \int\limits_{P_n[\mathbb{0},t\mathbb{1}]} \Lambda_m(\textit{\textbf{u}}) d\textit{\textbf{u}} \right)^{q/\widetilde{q}}\]
\[ = \int\limits_{\left(P_n[\mathbb{0},t\mathbb{1}]\right)^k} E \prod\limits_{i=1}^k\left( \Lambda^{(m)} (b^m \textit{\textbf{u}}_i) \right)^{\widetilde{p}} E \left(\prod\limits_{i=1}^k \Lambda_{m}(\textit{\textbf{u}}_i) \left( \int\limits_{P_n[\mathbb{0},t\mathbb{1}]} \Lambda_m(\textit{\textbf{v}}) d\textit{\textbf{v}} \right)^{q/\widetilde{q}} \right) \prod\limits_{i=1}^k d \textit{\textbf{u}}_i \]
\[\geq  \min_{\substack{\textit{\textbf{u}}_i\in P_n[\mathbb{0},t\mathbb{1}], \\ i=\overline{1,k}}} \rho(b^m\textit{\textbf{u}}_1,...,b^m\textit{\textbf{u}}_k,\widetilde{\textit{\textbf{p}}}) E\left( \int\limits_{\left(P_n[\mathbb{0},t\mathbb{1}]\right)^k} \prod\limits_{i=1}^k\left( \Lambda_{m} (\textit{\textbf{u}}_i) \right) \prod\limits_{i=1}^k d\textit{\textbf{u}}_i \left( \int\limits_{P_n[\mathbb{0},t\mathbb{1}]} \Lambda_m(\textit{\textbf{v}}) d\textit{\textbf{v}} \right)^{q/\widetilde{q}} \right),\] where $\widetilde{\textit{\textbf{p}}} = (\widetilde{p},\widetilde{p},...,\widetilde{p}).$

By Assumption {\rm \ref{assump2}}
\[\min_{\substack{\textit{\textbf{u}}_i\in P_n[\mathbb{0},t\mathbb{1}], \\ i=\overline{1,k}}} \rho(b^m\textit{\textbf{u}}_1,...,b^m\textit{\textbf{u}}_k,\widetilde{\textit{\textbf{p}}}) \geq \rho(\mathbb{0}, b^m t\mathbb{1},..., b^m (k-1) t\mathbb{1}, \widetilde{\textit{\textbf{p}}}).\] Thus,
\[ E A^q_{m+1}(t\mathbb{1}) \geq \rho(\mathbb{0}, b^m t\mathbb{1},..., b^m (k-1) t\mathbb{1}, \widetilde{\textit{\textbf{p}}}) E \left( \int\limits_{P_n[\mathbb{0},t\mathbb{1}]} \Lambda_m(\textit{\textbf{u}}) d\textit{\textbf{u}} \right)^{q} \]
\[ = E A^q_{m}(t\mathbb{1}) E\Lambda^q(\mathbb{0}) \frac{\rho(\mathbb{0}, b^m t\mathbb{1},..., b^m (k-1) t\mathbb{1}, \widetilde{\textit{\textbf{p}}})}{E\Lambda^q(\mathbb{0})}\]
\[ \geq E A^q_{m}(t\mathbb{1}) E\Lambda^q(\mathbb{0}) \frac{\rho(\mathbb{0}, b^{m-m_t}  \mathbb{1},..., b^{m-m_t} (k-1)  \mathbb{1}, \widetilde{\textit{\textbf{p}}})}{E\Lambda^q(\mathbb{0})}.\]
By applying the above recursive estimation, one gets
\[ EA_{m_t+1}^q(t\mathbb{1}) \geq C E A^q_{1}(t\mathbb{1}) (E\Lambda^q(\mathbb{0}))^{m_t } \prod_{i=1}^{m_t } \frac{\rho(\mathbb{0}, b^{i-m_t}  \mathbb{1},..., b^{i-m_t} (k-1)  \mathbb{1}, \widetilde{\textit{\textbf{p}}})}{E\Lambda^q(\mathbb{0})} \]
\[ \geq C t^{nq-\log_bE\Lambda^q(\mathbb{0})} \prod_{i=1}^{\infty} \frac{\rho(\mathbb{0}, b^{-i}  \mathbb{1},..., b^{-i} (k-1)  \mathbb{1}, \widetilde{\textit{\textbf{p}}})}{E\Lambda^q(\mathbb{0})}\geq Ct^{nq-\log_bE\Lambda^q(\mathbb{0})}.\] As $A_{m}^q(t\mathbb{1})$ is a submartingale $EA^q(t\mathbb{1})\geq EA_{m_t+1}^q(t\mathbb{1}),$ which provides the required boundary.

For $q\in(0,1)$ the estimate from below can be found by using  H\"{o}lder's inequality \eqref{holder_upper} with $\widetilde{p}=q.$ Then,
\[A_{m+1}^q(t\mathbb{1}) = \left( \int\limits_{P_n[\mathbb{0}, t \mathbb{1}]} \prod_{l=0}^{m_{\textit{\textbf{t}}}-1} \Lambda^{(l)}(b^l\textit{\textbf{u}}) \prod_{l=m_{\textit{\textbf{t}}}}^{m} \Lambda^{(l)}(b^l\textit{\textbf{u}} )d\textit{\textbf{u}} \right)^q \]
\[\geq \left( \int\limits_{P_n[\mathbb{0},t \mathbb{1}]} \left( \prod_{l=0}^{m_{\textit{\textbf{t}}}-1} (\Lambda^{(l)}(b^l\textit{\textbf{u}} ))\right)^q \prod_{l = m_{\textit{\textbf{t}}}}^{m}\Lambda^{(l)}(b^l\textit{\textbf{u}} )  d\textit{\textbf{u}}\right) \left(\int\limits_{P_n[\mathbb{0},t\mathbb{1}]} \prod_{l = m_{\mathfrak{t}}}^{m}\Lambda^{(l)}(b^l\textit{\textbf{u}} )  d\textit{\textbf{u}}\right)^{q/p}.\]
By applying the identity \eqref{moments_estimation_upper}, one obtains
\[ EA_{m+1}^q(t\mathbb{1}) \geq  (E\Lambda^q(\mathbb{0}))^{m_t} \left(b^{-m_t }\right)^{nq} E A_{m-m_t+1}^q(b^{\log_bt + m_{\textit{\textbf{t}}}}\mathbb{1}). \]  As $-1-\log_bt\leq m_t \leq -\log_b t, $ $b^{-m_tnq}\geq t^{nq},$ and $(E\Lambda^q(\mathbb{0}))^{m_t} \geq \frac{t^{-\log_bE\Lambda^q(\mathbb{0})}}{E\Lambda^q(\mathbb{0}))}.$ Therefore,
\[ EA_{m+1}^q(t\mathbb{1}) \geq t^{nq-\log_bE\Lambda^q(\mathbb{0})} \frac{ E A_{m-m_t+1}^q(b^{\log_bt + m_{\textit{\textbf{t}}}}\mathbb{1})}{E\Lambda^q(\mathbb{0})}.\]
As $A_{m-m_t+1}^q$ is a supermartingale,  $EA_{m-m_t+1}^q(b^{\log_bt + m_{\textit{\textbf{t}}}}\mathbb{1}) \geq EA^q(b^{\log_bt + m_{\textit{\textbf{t}}}}\mathbb{1}),$ and
\[ EA_{m+1}^q(t\mathbb{1}) \geq C t^{nq-\log_bE\Lambda^q(\mathbb{0})},\] which completes the proof.\end{proof}

\begin{theorem}
\label{renyi_func_theorem}
Let the conditions of Lemma {\rm \ref{lemma_moments}} be satisfied for all $q\in[0,p], \ p>0.$ Then the limiting measure $\mu(\cdot)$ from Corollary {\rm \ref{measure}} exists and possesses the following R{\'e}nyi function
\[ T(q) = q-1-\frac{1}{n}\log_bE\Lambda^q(\mathbb{0}), \ q\in[0,p].\]
\end{theorem}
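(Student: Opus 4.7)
The plan is to reduce the R\'enyi function calculation to the two-sided moment bound from Lemma \ref{lemma_moments} via the homogeneity of the limiting measure. First I would observe that, as noted at the start of Section \ref{sec3}, the strict homogeneity of $\Lambda(\cdot)$ implies that all dyadic cells of a fixed level have identically distributed $\mu$-measure, so for every $q\in[0,p]$ and every $j\in\mathbb{N}$
\[
E\sum_{l=0}^{2^{nj}-1}\mu\bigl(B^{(j)}_l\bigr)^q \;=\; 2^{nj}\,E\mu\bigl(P_n[\mathbb{0},2^{-j}\mathbb{1}]\bigr)^q \;=\; 2^{nj}\,EA^q(2^{-j}\mathbb{1}),
\]
where the last equality uses that $\mu(P_n[\mathbb{0},\textit{\textbf{t}}])=A(\textit{\textbf{t}})$ by the construction and by Corollary \ref{measure}.

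Next, I would invoke Lemma \ref{lemma_moments} at the scale $t=2^{-j}$, which for every $q\in[0,p]$ gives constants $C_1(q),C_2(q)>0$ such that
\[
C_1\,2^{-j\bigl(nq-\log_b E\Lambda^q(\mathbb{0})\bigr)} \;\le\; EA^q(2^{-j}\mathbb{1}) \;\le\; C_2\,2^{-j\bigl(nq-\log_b E\Lambda^q(\mathbb{0})\bigr)}
\]
for all sufficiently large $j$. Combining the two displays and taking $\log_2$ yields
\[
\log_2 C_1 + j\bigl(n-nq+\log_b E\Lambda^q(\mathbb{0})\bigr) \le \log_2\!\Bigl(E\!\sum_l\mu(B^{(j)}_l)^q\Bigr) \le \log_2 C_2 + j\bigl(n-nq+\log_b E\Lambda^q(\mathbb{0})\bigr).
\]

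Finally, dividing by $-nj$ and letting $j\to\infty$ the additive constants $\log_2 C_i/(nj)$ vanish, so the $\liminf$ defining $T(q)$ is in fact an honest limit equal to
\[
-\frac{n-nq+\log_b E\Lambda^q(\mathbb{0})}{n}\;=\;q-1-\frac{1}{n}\log_b E\Lambda^q(\mathbb{0}),
\]
which is the desired formula. Existence of the limiting measure $\mu(\cdot)$ on the dyadic family is already guaranteed by Corollary \ref{measure} under the present hypotheses, so no additional construction is required. The only nontrivial step is the two-sided moment estimate of Lemma \ref{lemma_moments}; once that is in place the R\'enyi computation itself is a short three-line bookkeeping argument, and the main potential pitfall I would double-check is that the hypotheses of Lemma \ref{lemma_moments} are assumed to hold uniformly for all $q\in[0,p]$, as required for $T(q)$ to be identified on the whole interval.
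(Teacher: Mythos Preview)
Your proposal is correct and follows essentially the same argument as the paper: both use the homogeneity of $\mu(\cdot)$ to reduce $E\sum_l\mu(B_l^{(j)})^q$ to $2^{nj}EA^q(2^{-j}\mathbb{1})$, plug in the two-sided moment estimate of Lemma~\ref{lemma_moments}, and read off the R\'enyi function from the resulting sandwich. The paper simply presents the upper and lower bounds separately rather than in one line, but the logic is identical.
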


\begin{proof}
Using the homogeneity of the limit measure $\mu(\cdot)$ one obtains
\[ T(q) = \liminf\limits_{j\to\infty} \frac{\log_2E\sum_l\mu^q\left(B_l^{(j)}  \right)}{\log_2\left|  B_0^{(j)}  \right| }  =  \liminf\limits_{j\to\infty}\frac{jn+\log_2E\mu^q\left(B_{0}^{(j)}\right)}{\log_2\left|B_{0}^{(j)}\right|}.\]

Now let us estimate the R{\'e}nyi function of $\mu(\cdot)$ from above. The set $B_0^{(j)}$ is the hypercube with Lebesgue measure $|B_0^{(j)}| = 2^{-nj}$.  For sufficiently large $j$ one can apply Lemma \ref{lemma_moments} and obtain that $E\mu^q(B_0^{(j)}) = EA^q(\frac{1}{2^j}\mathbb{1}) \leq C 2^{-j(nq-\log_bE\Lambda^q(\mathbb{0}))}.$ Thus, the following estimate \mbox{holds true}

\[T(q) \leq \liminf\limits_{j\to\infty}\frac{jn+\log_2C - j(nq-\log_bE\Lambda^q(\mathbb{0}))}{-jn} = q - 1 - \frac{1}{n}\log_bE\Lambda^q(\mathbb{0}). \]

The same estimate from below follows from the lower bound in \mbox{Lemma {\rm \ref{lemma_moments}}}. \end{proof}

\section{Sub-Gaussian geometric scenario}
\label{sec4}
The conditions presented in Theorem \ref{th2} are stated in terms of $q$-th moments of the underlying random fields $\Lambda(\cdot).$ The calculation of these high-order moments might involve applications of special methods, see, for example {\rm\cite[Section {\rm 1.3}]{Leon}}. However, for some random fields $\Lambda(\cdot)$ the conditions of Theorem \ref{th2} can be restated in terms of second moments of $\Lambda(\cdot),$ which makes these conditions easier to check. This section shows applications of the obtained general results for the sub-Gaussian geometric case.

First, this section provides the main definitions and notations from the theory of $\varphi$-sub-Gaussian random variables. These definitions and notations are adapted from the monograph \cite{BK}. Then, the second moment conditions that guarantee the  $L_q$ convergence of $A_m(\textit{\textbf{t}}), \ \textit{\textbf{t}}\in P_n[\mathbb{0}, \mathbb{1}],$ as $m\to\infty$, are provided.

The class of sub-Gaussian random variables is a natural extension of the class of Gaussian random variables. Tail distributions of sub-Gaussian random variables behave similarly to the Gaussian ones and their sample path properties rely on their mean square regularity. One of the main classical tools  to study the boundedness
of sub-Gaussian processes is the generic chaining (majorizing measures) method. There is a rich and well-developed theory on  sub-Gaussian random variables and processes, which can be found in Ledoux and Talagrand \cite{ledoux1996isoperimetry, ledoux1991probability} and references therein. The space of $\varphi$-sub-Gaussian random variables was introduced in \cite{kozachenko1985banach} to generalize the class of sub-Gaussian distributions. The sub-Gaussian distributions belong to the $\varphi$-sub-Gaussian class with $\varphi(x)=x^2/2.$ More properties of $\varphi$-sub-Gaussian distributions and processes, their applications in stochastic approximation theory and wavelet representations can be found in \cite{ giuliano2003spaces, antonini2008convergence, kozachenko2016whittaker, kozachenko2013convergence, kozachenko2015convergence}.
\begin{definition}\label{n_function} A continuous function $\varphi(x), x\in {\mathbb{R}}$, is called an Orlicz $N$-{function} if

a) it is even and convex,

b) $\varphi(0)=0$,

c) $\varphi(x)$  is a monotone increasing function for $x>0$,

d) $\lim\limits_{x\to 0} \varphi(x)/x =0$ and $\lim\limits_{x\to+\infty}  \varphi(x)/x =+\infty$.

\end{definition}

\begin{example}
{\rm The function $\varphi(x)= |x|^r/r, \ r> 1,$ is an Orlicz $N$-function.}
\end{example}

In the following the notation $\varphi(x)$ stands for an Orlicz $N$-function.

\begin{definition} A function $\psi(x):=\sup_{y\in {\mathbb{R}}} \left(xy - \varphi(y) \right), \ x\in {\mathbb{R}},$ is called the Young-Fenchel transform of $\varphi(x)$.
\end{definition}

\begin{example}
{\rm The Young-Fenchel transform of $\varphi(x)= |x|^p/p, \ p>1$, is $\psi(x)= |x|^r/r,$ where $1/p + 1/r =1$.}
\end{example}
Any Orlicz $N$-function $\varphi(x)$ can be represented in the integral form
\begin{equation}\label{density_N_function}
\varphi(x)= \int_0^{|x|} p_{\varphi}(t)\ dt,
\end{equation} where $p_{\varphi}(t)$, $t\geq0,$ is its density. The density $p_{\varphi}(\cdot)$ is nondecreasing and, as a consequence, the function $\varphi(\cdot)$ is increasing, differentiable, and $\varphi'(\cdot)=p_{\varphi}(\cdot)$.

\begin{definition}{\rm \cite{BK}}\label{def4} A zero mean random variable $X$ is $\varphi$-sub-Gaussian if there exists a finite positive constant $a$ such that  $E\exp\left(t X\right) \le \exp\left(\varphi(at)\right)$ for
all $t\in {\bf \mathbb{R}}$.
The $\varphi$-sub-Gaussian norm $\tau_{\varphi}(X)$ is defined as
\[ \tau_{\varphi}(X):=\inf\{a>0: E\exp\left(t X\right) \le \exp\left(\varphi(at)\right), \ t\in {\mathbb{R}} \}. \]
\end{definition}

The space $Sub_\varphi(\Omega)$ of all $\varphi$-sub-Gaussian random variables is a Banach space with respect to the norm $\tau_\varphi$.

$\varphi$-sub-Gaussianity allows to estimate tails of distributions. The following result holds.

\begin{lemma}{\rm \cite[Lemma 4.3, p.~66]{BK}}
\label{l1}
If $\varphi(\cdot)$ is an Orlicz $N$-function and a random variable $X\in Sub_\varphi(\Omega)$, then for all $x> 0$ it holds
\[P\big(X \geq x\big) \le \exp\left(-\psi\left(\frac{x}{\tau_{\varphi}(X)}\right)\right).\]
\end{lemma}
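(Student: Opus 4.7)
The plan is to use the exponential Markov (Chernoff) inequality together with the definition of the $\varphi$-sub-Gaussian norm and then optimize over the free parameter to recover the Young--Fenchel transform.

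First I would fix $x>0$ and $t>0$, and apply the exponential form of Markov's inequality to the positive random variable $\exp(tX)$:
\[
P(X\ge x) = P(e^{tX}\ge e^{tx}) \le e^{-tx}\,E e^{tX}.
\]
Next I would use the definition of $\tau_\varphi(X)$: for every $\varepsilon>0$ there exists $a_\varepsilon\le\tau_\varphi(X)+\varepsilon$ such that $Ee^{tX}\le\exp(\varphi(a_\varepsilon t))$ for all $t\in\mathbb{R}$. Inserting this in the previous display yields
\[
P(X\ge x)\le \exp\bigl(\varphi(a_\varepsilon t)-tx\bigr).
\]

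Now I would reparametrize by setting $y=a_\varepsilon t$ (so $t=y/a_\varepsilon$, and $t>0$ corresponds to $y>0$), which gives
\[
P(X\ge x)\le \exp\!\left(-\Bigl(\tfrac{x}{a_\varepsilon}\,y-\varphi(y)\Bigr)\right).
\]
Since this holds for every $y>0$, and since $\varphi$ is even while $x/a_\varepsilon>0$ ensures that the supremum of $(x/a_\varepsilon)y-\varphi(y)$ over $y\in\mathbb{R}$ is attained for $y\ge0$, taking the infimum of the right-hand side over $y$ produces
\[
P(X\ge x)\le \exp\!\left(-\psi\!\left(\tfrac{x}{a_\varepsilon}\right)\right),
\]
by the very definition of the Young--Fenchel transform $\psi$. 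Finally I would let $\varepsilon\downarrow0$: since $\psi$ is nondecreasing on $[0,\infty)$ (as a supremum of nondecreasing functions of $x$) and $a_\varepsilon\downarrow\tau_\varphi(X)$, continuity/monotonicity of $\psi$ yields the desired bound
\[
P(X\ge x)\le\exp\!\left(-\psi\!\left(\tfrac{x}{\tau_\varphi(X)}\right)\right).
\]

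I expect no serious obstacle: the argument is a textbook Chernoff bound whose only non-routine ingredient is recognizing that the optimization over $t$ is exactly the Legendre--Young--Fenchel transform. The only care point is handling the infimum vs.\ supremum correctly (taking $t>0$ rather than $t\in\mathbb{R}$) and passing from an arbitrary admissible constant $a_\varepsilon$ to the infimum $\tau_\varphi(X)$; both are handled by monotonicity of $\psi$ on $[0,\infty)$.
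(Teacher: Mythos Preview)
Your proof is correct and is exactly the standard Chernoff-bound argument. Note, however, that the paper does not actually prove this lemma: it is quoted verbatim from \cite[Lemma~4.3, p.~66]{BK} and used as a black box, so there is no ``paper's own proof'' to compare against. The argument you give---Markov's inequality on $e^{tX}$, the moment-generating bound from the definition of $\tau_\varphi$, and optimization over $t$ recognized as the Young--Fenchel transform---is precisely the proof one finds in the cited monograph. One minor simplification: it is known (and proved in \cite{BK}) that the infimum defining $\tau_\varphi(X)$ is attained, so $Ee^{tX}\le\exp(\varphi(\tau_\varphi(X)t))$ holds directly and the $\varepsilon$-approximation step can be omitted; but your version with the limit $\varepsilon\downarrow0$ is perfectly valid as well.
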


\begin{definition}{\rm\cite{kozachenko2016whittaker}}\label{subgaussian_set}
A set $\Delta$ of random variables from $Sub_{\varphi}(\Omega)$ is called strictly $\varphi$-sub-Gaussian if there exists a constant $D_\Delta > 0$ such that for all finite sets $I,$ $\lambda_i\in \mathbb{R},$ and $\xi_i \in \Delta, \ i\in I, $ it holds
\[ \tau_{\varphi}\left(\sum_{i\in I}\lambda_i \xi_i\right) \leq D_\Delta \left( E \left(\sum_{i\in I}\lambda_i \xi_i\right)^2  \right)^{1/2}. \]
The constant $D_\Delta$ is called a defining constant.
\end{definition}

\begin{definition}{\rm\cite{kozachenko2016whittaker}}
A random field $X(\textit{\textbf{s}}), \ \textit{\textbf{s}}\in \mathbb{R}^n, \ n \geq 1,$ is called strictly $\varphi$-sub-Gaussian if $\sup_{s\in\mathbb{R}^n}\tau_\varphi(X(\textit{\textbf{s}})) < \infty$ and the set of random variables $\{ X(\textit{\textbf{s}}), \ \textit{\textbf{s}}\in\mathbb{R}^n \}$ is strictly $\varphi$-sub-Gaussian. The defining constant of this set is called the defining constant of the random field $X(\cdot)$ and is denoted by $D_X.$
\end{definition}

\begin{assumption}
\label{assump_sub}
Let $\Lambda(\textit{\textbf{s}}) = e^{X(\textit{\textbf{s}})}/Ee^{X(\mathbb{0})},\ \textit{\textbf{s}} \in \mathbb{R}^n, \ n \geq 1,$ where $X(\cdot)$ is a homogeneous, isotropic strictly $\varphi$-sub-Gaussian random field with the defining constant $D_X$ such that its covariance function $\rho_X(||\textit{\textbf{u}}||) = E(X(\mathbb{0}) X(\textit{\textbf{u}})),\ \textit{\textbf{u}} \in\mathbb{R}^n,$ is nonincreasing in $||\textit{\textbf{u}}||.$
\end{assumption}

In what follows, analogously to previous sections,  $\Lambda^{(i)}(\textit{\textbf{s}})=e^{X^{(i)}(\textit{\textbf{s}})}/Ee^{X^{(i)}(\mathbb{0})},$ $\textit{\textbf{s}} \in \mathbb{R}^n, \ i=0,1,...,$ where $ X^{(i)}(\cdot), \ i=0,1,...,$ is an infinite collection of independent stochastic copies of $X(\cdot)$.

The following example shows that wide classes of $\varphi$-sub-Gaussian random fields with a given determining constant can be easily constructed.

\begin{example}\label{example_sub} Note that to construct random variables $A_m(\textit{\textbf{t}}),\ m \geq1,$ it is sufficient to define random fields $\Lambda^{(i)}(\cdot), \ i=\overline{0,m-1}.$

Let $\varphi(\sqrt{x})$ be concave and $\{ \xi_j, \ j=1,2,... \} $ be a family of independent $Sub_{\varphi}(\Omega)$ random variables such that there exists such $D>0$ that $\tau_\varphi(\xi_j) \leq D(E\xi_j^2)^{1/2}$ for any $j= 1,2....$ Consider a sequence of nonrandom functions $\{ f_j(\textit{\textbf{s}}), \ j\geq1 \}$, such that the series $\sum_{j=1}^{\infty}f_j(\textit{\textbf{s}})$ converges for $\textbf{\textit{s}}\in P_n[\mathbb{0}, \mathbb{1}],$ then
\[
X (\textit{\textbf{s}}) = \sum\limits_{j=1}^\infty \xi_j f_j (\textit{\textbf{s}} ), \ \textit{\textbf{s}} \in P_n[\mathbb{0},  \mathbb{1}],\]
 is a strictly $\varphi$-sub-Gaussian random field with the determining constant $D,$ see {\rm \cite[Example 3.10]{kozachenko2018estimates}.}
\end{example}

In what follows, for the simplicity of the presentation, we denote $\widetilde{\varphi}(\cdot) : = \varphi(\sqrt{\cdot}).$ Note that the function $\widetilde{\varphi}(\cdot)$ is finite on each bounded interval as $\varphi(\cdot)$ is continuous.

\begin{theorem}
\label{th_sub}
Let Assumption {\rm \ref{assump_sub}} be satisfied. Suppose that
\begin{equation}\label{varphi_bound}Ee^{X(\mathbb{0})}>1 \ {\rm or} \ \frac{p_{\varphi}(x)}{|x|}<C<+\infty
\end{equation} in some neighbourhood of $0.$ If for $\textit{\textbf{p}} = (p_1,p_2,...,p_k),\ p_j\geq1, \ j=\overline{1,k}, \ k\geq2,$ such that $\sum_{j=1}^kp_j = p,$ it holds
\begin{equation}
\label{cond1_sub}
 b > \exp\left({\frac{1}{n}}\left(\widetilde{\varphi}\left(p^2D^2_XEX^2(\mathbb{0})\right)- p\ln\left( E e^{X(\mathbb{0})} \right)\right)\right),
\end{equation}
\begin{equation}
\label{cond2_sub}
\sum_{i=0}^\infty\left(D_X^2\sum\limits_{l,h=1}^kp_lp_h\rho_X(||b^i(l-h)\mathbb{1}||)-\widetilde{\varphi}^{-1}\left(p\ln\left( E e^{X(\mathbb{0})}\right)\right)\right)<+\infty,
\end{equation} then for every fixed $\textit{\textbf{t}} \in P_n[\mathbb{0},\mathbb{1}]$ and for all $q\in[0,p],$ the random variables $A_m(\textit{\textbf{t}})$  converge to some random variables $A(\textit{\textbf{t}})$ in the spaces $L_q,$ as $m \to \infty$.

\end{theorem}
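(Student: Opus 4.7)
The approach is to reduce the claim to Corollary \ref{cor_for_sub} by exhibiting a majorant $\widetilde{\rho}$ built directly from the strict $\varphi$-sub-Gaussian structure of $X(\cdot)$. All hypotheses of Assumption \ref{assump_main} are immediate for $\Lambda(\textit{\textbf{s}}) = e^{X(\textit{\textbf{s}})}/Ee^{X(\mathbb{0})}$, so the task amounts to verifying Assumption \ref{assump2'} and the two conditions \eqref{cond2.1'}--\eqref{cond2.2'} for this $\widetilde{\rho}$.

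The construction starts from the identity
\[E\prod_{j=1}^k \Lambda^{p_j}(\textit{\textbf{u}}_j) = (Ee^{X(\mathbb{0})})^{-p}\, E\exp\Bigl(\sum_{j=1}^k p_j X(\textit{\textbf{u}}_j)\Bigr).\]
Since $\{X(\textit{\textbf{u}})\}$ is strictly $\varphi$-sub-Gaussian with defining constant $D_X$, Definition \ref{subgaussian_set} yields
\[\tau_\varphi\Bigl(\sum_{j=1}^k p_j X(\textit{\textbf{u}}_j)\Bigr) \leq D_X\Bigl(\sum_{l,h=1}^k p_l p_h\, \rho_X(\|\textit{\textbf{u}}_l-\textit{\textbf{u}}_h\|)\Bigr)^{1/2}.\]
Applying Definition \ref{def4} with $t=1$ and rewriting in terms of $\widetilde{\varphi}(\cdot) = \varphi(\sqrt{\,\cdot\,})$ gives the natural candidate
\[\widetilde{\rho}(\textit{\textbf{u}}_1,\ldots,\textit{\textbf{u}}_k,\textit{\textbf{p}}) := (Ee^{X(\mathbb{0})})^{-p}\exp\Bigl(\widetilde{\varphi}\Bigl(D_X^2\sum_{l,h=1}^k p_l p_h\, \rho_X(\|\textit{\textbf{u}}_l-\textit{\textbf{u}}_h\|)\Bigr)\Bigr).\]
The monotonicity required by Assumption \ref{assump2'} is inherited from the monotonicity of $\rho_X(\cdot)$ and of $\widetilde{\varphi}(\cdot)$: if all pairwise distances shrink, every $\rho_X$-term grows, so does their weighted sum, and hence $\widetilde{\rho}$. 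Condition \eqref{cond2.1'} is then verified by evaluating $\widetilde{\rho}$ at $\textit{\textbf{u}}_j\equiv\mathbb{0}$; using $\rho_X(0)=EX^2(\mathbb{0})$ and $\sum_{l,h} p_l p_h = p^2$ reproduces exactly the right-hand side of \eqref{cond1_sub}.

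The main technical step, and the place where the dichotomy in \eqref{varphi_bound} plays its role, is the verification of \eqref{cond2.2'}. Set $A_i := D_X^2\sum_{l,h} p_l p_h\, \rho_X(\|b^i(l-h)\mathbb{1}\|)$ and $c := \widetilde{\varphi}^{-1}(p\ln Ee^{X(\mathbb{0})})$; then the target sum equals $\sum_i(\widetilde{\varphi}(A_i)-\widetilde{\varphi}(c))$, while the given hypothesis \eqref{cond2_sub} reads $\sum_i(A_i-c)<+\infty$. In the first branch $Ee^{X(\mathbb{0})}>1$ we have $c>0$, so \eqref{cond2_sub} forces the positive part $(A_i-c)^+$ to be summable and, in particular, $A_i$ eventually drops below $c$; thus $\ln\widetilde{\rho}\leq 0$ eventually, placing us in the first case of the proof of Theorem \ref{th2} where the integral in \eqref{int_int} is finite under \eqref{cond2.1'} alone. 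In the second branch, the bound $p_\varphi(x)/|x|<C$ near $0$ combined with $\varphi(x)=\int_0^{|x|} p_\varphi(t)\,dt$ yields $\varphi(x)\leq Cx^2/2$, equivalently $\widetilde{\varphi}(x)\leq Cx/2$, near $0$; since $A_i\to 0$ by \eqref{cond2_sub}, this linear majorization converts the linear summability of $A_i$ directly into $\sum\widetilde{\varphi}(A_i)<+\infty$, which is \eqref{cond2.2'}. Once both conditions are in force, Corollary \ref{cor_for_sub} delivers the $L_p$ convergence, and the convergence in $L_q$ for $q\in[0,p]$ follows from Jensen's inequality exactly as in Theorem \ref{th2}.

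The anticipated main obstacle is precisely the last verification: translating the linear-in-$A_i$ summability \eqref{cond2_sub} into the exponential-in-$\widetilde{\varphi}(A_i)$ summability \eqref{cond2.2'}. Without additional structure on $\widetilde{\varphi}$, neither a generic convexity nor concavity argument suffices, and the two disjoint regimes of \eqref{varphi_bound} are needed: the assumption $Ee^{X(\mathbb{0})}>1$ pushes the problem into the "eventually bounded" regime already covered by the proof of Theorem \ref{th2}, while the growth control $p_\varphi(x)/|x|<C$ near $0$ provides the linear comparison $\widetilde{\varphi}(x)\lesssim x$ required when the multiplicative correction $Ee^{X(\mathbb{0})}$ degenerates to $1$.
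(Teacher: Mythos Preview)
Your construction of $\widetilde{\rho}$, the verification of Assumption~\ref{assump2'}, and the check of \eqref{cond2.1'} all match the paper and are correct. The gap is in your treatment of the first branch $Ee^{X(\mathbb{0})}>1$ when verifying~\eqref{cond2.2'}. You assert that summability of $(A_i-c)^+$ forces $A_i$ to eventually drop below $c$, but this is false: take $A_i=c+2^{-i}$. In fact, since $\rho_X(\cdot)$ is nonincreasing and the coefficients $p_lp_h$ are positive, the sequence $A_i$ is itself nonincreasing; together with $\sum_i(A_i-c)<+\infty$ this only gives $A_i\downarrow c$, and the generic situation is precisely that $A_i>c$ for every $i$. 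Consequently $\ln\widetilde{\rho}(\mathbb{0},b^i\mathbb{1},\dots)=\widetilde{\varphi}(A_i)-\widetilde{\varphi}(c)$ stays strictly positive, and you cannot invoke the ``first case'' of the proof of Theorem~\ref{th2}.

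The paper closes this gap by a single application of the mean value theorem, which unifies both branches. Writing $\widetilde{\varphi}(A_i)-\widetilde{\varphi}(c)=\widetilde{\varphi}'(\eta_i)(A_i-c)$ with $\eta_i$ between $c$ and $A_i$, and noting $\widetilde{\varphi}'(\eta)=p_\varphi(\sqrt{\eta})/(2\sqrt{\eta})$, it suffices to bound $\widetilde{\varphi}'$ on the interval $[\,\min(c,\inf_iA_i),\,A_0\,]$. When $Ee^{X(\mathbb{0})}>1$ one has $c>0$, so the interval is bounded away from $0$ and $\widetilde{\varphi}'$ is bounded there because $p_\varphi$ is nondecreasing; when the second alternative in~\eqref{varphi_bound} holds, the ratio $p_\varphi(\sqrt{\eta})/(2\sqrt{\eta})$ is bounded near $0$ by hypothesis. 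Either way one obtains a constant $C_1$ with $\ln\widetilde{\rho}(\mathbb{0},b^i\mathbb{1},\dots)\le C_1(A_i-c)$, and~\eqref{cond2_sub} yields~\eqref{cond2.2'} directly. Your second-branch argument is essentially a special case of this (it handles only $c=0$, since you need $A_i\to0$), so replacing your first-branch reasoning by the mean value theorem step would make the proof complete and would in fact subsume your second branch as well.
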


\begin{proof}
First, let us show that Assumption \ref{assump2'} is satisfied with the following function
\begin{equation}\label{Rho}
\widetilde{\rho}(\textit{\textbf{u}}_1,\textit{\textbf{u}}_2,...,\textit{\textbf{u}}_k, \textit{\textbf{p}}) :=  \exp\bigg(\widetilde{\varphi}\bigg(D^2_X\sum\limits_{\substack{l,h=1}}^kp_i p_j\rho_X(||\textit{\textbf{u}}_l-\textit{\textbf{u}}_h||)\bigg)\bigg)- p\ln\left( E e^{X(\mathbb{0})}\right)\bigg).
\end{equation}

The random field $X(\cdot)$ is strictly $\varphi$-sub-Gaussian. Thus, by Definitions \ref{def4} and \ref{subgaussian_set} the following estimate \mbox{holds true}
\[E\exp\bigg(\sum_{l=1}^kp_lX(\textit{\textbf{u}}_l)\bigg) \leq \exp\bigg(\varphi\bigg(\tau_\varphi\bigg(\sum\limits_{l=1}^kp_lX(\textit{\textbf{u}}_l)\bigg)\bigg)\bigg) \]
\[ \leq \exp\bigg(\varphi\bigg(D_X \bigg( E \bigg(\sum\limits_{l=1}^kp_lX(\textit{\textbf{u}}_l)\bigg)^2  \bigg)^{1/2}\bigg)\bigg)=  \exp\bigg(\widetilde{\varphi}\bigg(D^2_X \sum\limits_{\substack{l,h=1 }}^kp_l p_h\rho_X(||\textit{\textbf{u}}_l-\textit{\textbf{u}}_h||)\bigg)\bigg)\bigg).\]
By Assumption \ref{assump_sub}, one obtains
\[ E\left( \prod_{l=1}^k\Lambda^{p_l}(\textit{\textbf{u}}_l)\right) \leq \exp\bigg(\widetilde{\varphi}\bigg(D^2_X\sum\limits_{\substack{l,h=1 }}^kp_l p_h\rho_X(||\textit{\textbf{u}}_l-\textit{\textbf{u}}_h||)\bigg)-p\ln\left(E e^{X(\mathbb{0})}\right)\bigg).\] As $\rho_X(\cdot)$ is a nonincreasing function, Assumption \ref{assump2'} is satisfied.

Note that by Jensen's inequality $Ee^{X(\mathbb{0})}\geq e^{EX(\mathbb{0})}=1.$ Thus, $\ln\left(E e^{X(\mathbb{0})} \right) \geq 0.$

We will prove the convergence of $A_m(\textit{\textbf{t}})$ in the space $L_p,\ p\geq 2,$ by checking the conditions of Corollary \ref{cor_for_sub}.

By the definition of function $\widetilde{\rho}(\cdot),$ one can see that
\[
\widetilde{\rho}^{\frac{1}{n}}(\mathbb{0}, \mathbb{0},..., \mathbb{0}, \textit{\textbf{p}}) = \exp\left({\frac{1}{n}}\left(\widetilde{\varphi}\left(p^2D^2_XEX^2(\mathbb{0})\right)-p\ln\left(E e^{X(\mathbb{0})}\right)\right)\right).
\] Note that $\widetilde{\rho}^{\frac{1}{n}}(\mathbb{0}, \mathbb{0},..., \mathbb{0}, \textit{\textbf{p}})$ is finite and the condition \eqref{cond2.1'} is satisfied if
\[  b > \exp\left({\frac{1}{n}}\left(\widetilde{\varphi}\left(p^2 D^2_XEX^2(\mathbb{0})\right)-p\ln\left(E e^{X(\mathbb{0})}\right)\right)\right),\]
which is the condition \eqref{cond1_sub} of the theorem.

The next step is to check the condition \eqref{cond2.2'}. Lets consider
\[ \widetilde{\rho}(\mathbb{0}, b^i\mathbb{1}, 2b^i\mathbb{1},...,(k-1)b^i\mathbb{1},\textit{\textbf{p}})\]
\[ = \exp\left(\widetilde{\varphi}\left(D^2_X \sum\limits_{\substack{l,h=1}}^kp_l p_h\rho_X(||b^i(l-h)\mathbb{1}||)\right) - p\ln\left(E e^{X(\mathbb{0})}\right) \right).\] By the mean value theorem there exists such
\begin{equation}\label{interval}\eta\in \left[\widetilde{\varphi}^{-1}\left(p\ln\left(E e^{X(\mathbb{0})} \right) \right), \ \left(D^2_X\sum\limits_{\substack{l,h=1}}^kp_l p_h \rho_X(||b^i(l-h)\mathbb{1}||)\right)\right]\end{equation} that
\[\widetilde{\varphi}\left(D^2_X\sum\limits_{\substack{l,h=1}}^kp_l p_h \rho_X(||b^i(l-h)\mathbb{1}||)\right) - \widetilde{\varphi}\left(\widetilde{\varphi}^{-1}\left(p\ln\left(E e^{X(\mathbb{0})} \right) \right) \right) \]
\[ = \widetilde{\varphi}^{'}(\eta) \left(D^2_X\sum\limits_{\substack{l,h=1}}^kp_l p_h \rho_X(||b^i(l-h)\mathbb{1}||) - \widetilde{\varphi}^{-1}\left(p\ln\left(E e^{X(\mathbb{0})} \right) \right) \right).\] According to \eqref{density_N_function}, $\widetilde{\varphi}'(\eta) = p_{\varphi}(\sqrt{\eta})/(2\sqrt{\eta}),$ where $p_{\varphi}(\cdot)$ is the density of $\varphi(\cdot).$

Let \[C_1:=\sup_{x}p_{\varphi}(\sqrt{x})/(2\sqrt{x}),\] where $x$ is from the interval \eqref{interval}. Then, by \eqref{varphi_bound} $C_1<+\infty,$ and
\[ \widetilde{\rho}(\mathbb{0}, b^i\mathbb{1}, 2b^i\mathbb{1},...,(k-1)b^i\mathbb{1}, \textit{\textbf{p}}) \] \[ \leq \exp\left(C_1 \left( D^2_X\sum\limits_{\substack{l,h=1}}^kp_l p_h \rho_X(||b^i(l-h)\mathbb{1}||) - \widetilde{\varphi}^{-1}\left(p\ln\left(E e^{X(\mathbb{0})} \right) \right)\right)\right),\] and condition \eqref{cond2.2'} is satisfied when \eqref{cond2_sub} holds true. Thus, by Corollary \ref{cor_for_sub} the random variables $A_m(\textbf{\textit{t}})$ converge to $A(\textbf{\textit{t}})$ in $L_q$.
\end{proof}

\begin{remark}
As $\rho_X(||b^i(l-h)\mathbb{1}||)\leq\rho_X(||b^i\mathbb{1}||)=\rho_X(\sqrt{n}b^i),\ i\geq0,\ l\neq h,$ the series in {\rm \eqref{cond2_sub}}  can be estimated as
\[\sum_{i=0}^\infty\left(D_X^2\sum\limits_{l,h=1}^kp_lp_h\rho_X(||b^i(l-h)\mathbb{1}||)-\widetilde{\varphi}^{-1}\left(p\ln\left( E e^{X(\mathbb{0})}\right)\right)\right)\]
\[ \leq \sum_{i=0}^\infty\left(D_X^2\left(\sum\limits_{\substack{l,h=1 \\ l \neq h}}^kp_lp_h\rho_X(||\sqrt{n}b^i||) + \sum\limits_{\substack{l=1}}^kp^2_l\rho_X(0) \right)-\widetilde{\varphi}^{-1}\left(p\ln\left( E e^{X(\mathbb{0})}\right)\right)\right).\]
\end{remark}

\begin{remark}\label{remark_sub}
Let $p_1=p_2=...=p_k.$ Then,
\[ \sum_{l,h=1}^k p_l p_h \rho_X(||b^i(l-h)\mathbb{1}||) = p_1^2\left( k\rho_X(\mathbb{0}) + 2\sum_{j=1}^{k-1}(k-j)\rho_X(\sqrt{n}b^ij) \right).\]

Indeed, the first double sum consists of $k$ "diagonal" elements (where $l=k$), each equals to $p^2_1\rho_X(0)$ and $2(k-j)$ "off-diagonal" elements (where $|l-h|=j),$ each equals to $p^2_1\rho_X(||b^i j \mathbb{1}||).$

As $\rho_X(\cdot)$ is a nonincreasing function, the second sum shows that the summability condition is required only for $j=1,$ i.e. one can use $\sum_{i=0}^{\infty}\rho_X(\sqrt{n}b^i)<+\infty.$ \end{remark}

The next corollary gives specific sufficient conditions on a wide class of Orlicz $N$-functions $\varphi(\cdot)$ that guarantee that Theorem \eqref{th_sub} holds for the corresponding $\varphi$-sub-Gaussian fields.

\begin{corollary} Let Assumption {\rm \ref{assump_sub}}, and \eqref{cond1_sub} of Theorem {\rm \ref{th_sub}} be satisfied, $Ee^{X(\mathbb{0})}>1,$ and there exist $C_1>0, \ x_0>0,$ such that for all $x\geq x_0$
\begin{equation}\label{cor_sub_cond_1}
\varphi(x)\leq C_1x^{\beta}, \ \beta\in(1,2).
\end{equation}
\begin{enumerate}[label=(\roman*)]
\item If
\begin{equation}\label{cor_sub_cond_2}
\sum_{i=0}^{\infty} \rho_X(\sqrt{n}b^i)<+\infty,
\end{equation} then for $p_j\leq C_0, \ j=\overline{1,k},$ and sufficiently large $k$ the condition \eqref{cond2_sub} and the statement of Theorem {\rm \ref{th_sub}} hold true.

\item If $\Lambda(\cdot)$ is $p$-weakly associated and there exist $C_2,$ $x_0$ such that for all $x\geq x_0$
\begin{equation*}
0\leq \rho_X(\sqrt{n}x) \leq C_2 x^{-\alpha}, \ \alpha>n,
\end{equation*}
and $b>e^{a(p)}, $  where $p$ is an even integer and $a(p) = \frac{1}{n}\left(\widetilde{\varphi}\left(p^2D^2_X\rho_X(0)\right)-p\ln Ee^{X(\mathbb{0})}\right),$ then for all $\textit{\textbf{t}}\in P_n[\mathbb{0},\mathbb{1}]$ and $q \in [2,p]$
\begin{equation*}
E|A(\textit{\textbf{t}})-A_m(\textit{\textbf{t}})|^q\leq C \bigg(\prod\limits_{i=1}^nt_i\bigg)^{q-1}\left(\frac{e^{na(p)}}{b^n}\right)^m.
\end{equation*}
\end{enumerate}
\end{corollary}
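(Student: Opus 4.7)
The two parts are largely independent. Part~(i) amounts to checking that hypothesis \eqref{cond2_sub} of Theorem~\ref{th_sub} is met once $k$ is large enough, and Part~(ii) is an application of Corollary~\ref{cor_widetilde_rho} with $\widetilde{\rho}$ as defined in \eqref{Rho}, specialized to $\textit{\textbf{p}}=(1,1,\ldots,1)$ of length~$p$.

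For Part~(i), the plan is to split the quadratic form appearing in \eqref{cond2_sub} into its diagonal and off-diagonal contributions. The diagonal $l=h$ part equals $\rho_X(0)\sum_l p_l^2\le C_0^2\rho_X(0)\,k$, while the off-diagonal part is bounded, using monotonicity of $\rho_X$ and $|l-h|\ge 1$, by $p^2\rho_X(\sqrt{n}b^i)\le C_0^2k^2\rho_X(\sqrt{n}b^i)$. Summing over $i$ the off-diagonal piece gives $D_X^2C_0^2k^2\sum_i\rho_X(\sqrt{n}b^i)<\infty$ by hypothesis~\eqref{cor_sub_cond_2}. It remains to absorb the constant diagonal contribution into the $\widetilde{\varphi}^{-1}$-term: from $\varphi(x)\le C_1x^{\beta}$ one deduces $\widetilde{\varphi}(y)\le C_1y^{\beta/2}$, hence $\widetilde{\varphi}^{-1}(z)\ge(z/C_1)^{2/\beta}$. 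Since $p\ge k$ and $\ln Ee^{X(\mathbb{0})}>0$, this yields $\widetilde{\varphi}^{-1}(p\ln Ee^{X(\mathbb{0})})\ge Ck^{2/\beta}$; because $2/\beta>1$, for all sufficiently large $k$ this dominates $D_X^2\rho_X(0)C_0^2k$, so the "constant per $i$" summand is strictly negative. The whole series is therefore bounded above (in fact it diverges to $-\infty$), so \eqref{cond2_sub} is satisfied and Theorem~\ref{th_sub} applies.

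For Part~(ii), the plan is to verify the three hypotheses of Corollary~\ref{cor_widetilde_rho}. The matching constant is immediate: evaluating \eqref{Rho} at all-zero arguments gives $\widetilde{\rho}(\mathbb{0},\ldots,\mathbb{0})=\exp(\widetilde{\varphi}(p^2D_X^2\rho_X(0))-p\ln Ee^{X(\mathbb{0})})=e^{na(p)}$, so $b>e^{a(p)}$ is exactly $b>\widetilde{\rho}(\mathbb{0},\ldots,\mathbb{0})^{1/n}$, and the rate $(e^{na(p)}/b^n)^m$ appearing in the conclusion is what Corollary~\ref{cor_widetilde_rho} delivers. The $p$-weak association hypothesis combined with Remark~\ref{remark_assoc_main} guarantees $\rho\ge 1$, and since $\widetilde{\rho}\ge\rho$ from the proof of Theorem~\ref{th_sub}, the requirement $\widetilde{\rho}\ge 1$ follows. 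For the polynomial-decay condition $\ln\widetilde{\rho}(\mathbb{0},x\mathbb{1},\ldots,(p-1)x\mathbb{1})\le C_1x^{-\alpha}$, I would use the mean-value bound from the proof of Theorem~\ref{th_sub}, write the argument of $\widetilde{\varphi}$ as $D_X^2\bigl[p\rho_X(0)+\sum_{l\ne h}\rho_X(|l-h|\sqrt{n}x)\bigr]$, and control the off-diagonal part via $\rho_X(\sqrt{n}x)\le C_2x^{-\alpha}$ so that it contributes at most $Cp^2x^{-\alpha}$ to the exponent. The main obstacle here is that the constant (in $x$) part of the exponent does not automatically vanish for generic $\varphi$-sub-Gaussian fields as it does in the Gaussian case; the way around it is to observe that for the validity of the integral bound in the proof of Corollary~\ref{cor_widetilde_rho} one only needs $\ln\widetilde{\rho}$ to have a summable positive part (the negative part is harmless), so the decaying off-diagonal estimate is what drives integrability and yields $\ln\widetilde{\rho}\le Cx^{-\alpha}$ with $\alpha>n$ as required. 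Once all three conditions are in place, Corollary~\ref{cor_widetilde_rho} gives precisely the stated rate.
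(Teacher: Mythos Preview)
Your Part~(i) is correct and essentially matches the paper. Both arguments exploit the same mechanism: the diagonal contribution $D_X^2\rho_X(0)\sum_l p_l^2$ grows at most linearly in $k$, whereas $\widetilde{\varphi}^{-1}(p\ln Ee^{X(\mathbb{0})})$ grows like $k^{2/\beta}$ with $2/\beta>1$, so for large $k$ the diagonal is dominated and \eqref{cond2_sub} follows from \eqref{cor_sub_cond_2}. The paper applies $\widetilde{\varphi}$ to both sides and compares; you invert and compare; the content is the same.

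Part~(ii), however, has a genuine gap. You correctly isolate the obstacle: when $x\to\infty$ the argument of $\widetilde{\varphi}$ in \eqref{Rho} tends to $D_X^2 p\rho_X(0)$, so $\ln\widetilde{\rho}(\mathbb{0},x\mathbb{1},\ldots,(p-1)x\mathbb{1})\to\widetilde{\varphi}(D_X^2 p\rho_X(0))-p\ln Ee^{X(\mathbb{0})}$, and for a generic $\varphi$-sub-Gaussian field this limit need not be $\le 0$. Your proposed fix --- that Corollary~\ref{cor_widetilde_rho} only needs a ``summable positive part'' --- does not work. If the constant limit were strictly positive, then $\widetilde{\rho}\ge 1+\varepsilon$ uniformly in $i$ for all large $i$, the infinite product $\prod_{i=m+1}^\infty\widetilde{\rho}(b^i\cdot)$ would diverge, and the integral bound in the proof of Corollary~\ref{cor_widetilde_rho} (specifically the analogue of \eqref{int_int_rate_lq_fast}) would be infinite. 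The hypothesis $\ln\widetilde{\rho}\le C_1 x^{-\alpha}$ really does force $\ln\widetilde{\rho}\to 0$ and cannot be weakened in the way you suggest.

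The paper resolves this by invoking Part~(i) itself. With $p_j=1$ and $k=p$, the estimate from Part~(i) gives, for $p$ sufficiently large, $D_X^2 p\rho_X(0)\le \widetilde{\varphi}^{-1}(p\ln Ee^{X(\mathbb{0})})$, i.e.\ $\widetilde{\varphi}(D_X^2 p\rho_X(0))\le p\ln Ee^{X(\mathbb{0})}$. Then
\[
\ln\widetilde{\rho}(\mathbb{0},x\mathbb{1},\ldots,(p-1)x\mathbb{1})
\le \widetilde{\varphi}\bigl(D_X^2 p\rho_X(0)+D_X^2 p(p-1)\rho_X(\sqrt{n}x)\bigr)-\widetilde{\varphi}\bigl(D_X^2 p\rho_X(0)\bigr),
\]
and a mean-value argument on the interval $[D_X^2 p\rho_X(0),\,D_X^2 p^2\rho_X(0)]$ bounds the right-hand side by $C\rho_X(\sqrt{n}x)\le Cx^{-\alpha}$. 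This is the missing step that makes the constant disappear; note it carries an implicit ``$p$ large enough'' which is not stated explicitly in the corollary but is used in the paper's proof.
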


\begin{proof}
\textit{(i)} It follows from \eqref{cor_sub_cond_2} that the condition \eqref{cond2_sub} is satisfied if
\[ D^2_X \sum_{l=1}^k p^2_l \rho_X(\mathbb{0}) \leq \widetilde{\varphi}^{-1}\left(p\ln\left( Ee^{X(\mathbb{0})} \right)\right).\] The application of $\widetilde{\varphi}(\cdot)$ to the both sides of this inequality and \eqref{cor_sub_cond_1} give
\begin{equation}\label{++} \widetilde{\varphi}\left( D^2_X \sum_{l=1}^k p^2_l \rho_X(\mathbb{0}) \right) \leq C_1D_X^{\beta} \left(\sum_{l=1}^k p^2_l\right)^{\beta/2} \rho^{\beta/2}_X(\mathbb{0}) \leq \sum_{l=1}^kp_l\ln\left( Ee^{X(\mathbb{0})} \right).\end{equation} The last inequality holds true if

\[ \frac{\left( \sum_{l=1}^k p^2_l \right)^{\beta/2}}{\sum_{l=1}^kp_l} \leq \frac{\ln\left( Ee^{X(\mathbb{0})} \right)}{C_1 D_X^{\beta} \rho^{\beta/2}(0)}.\] Noting that for $\beta<2$

\[ \frac{\left( \sum_{l=1}^k p^2_l \right)^{\beta/2}}{\sum_{l=1}^kp_l} \leq \frac{k^{\beta/2}\max_{l=\overline{1,k}}p^2_l}{k\min_{l=\overline{1,k}}p_l}\to 0, \ k\to\infty,\] one obtains \eqref{++} for sufficiently large $k.$

\textit{(ii)} From \eqref{Rho} it follows that for the vector $\textit{\textbf{p}} = (1,1,...,1)$ that consists of $p$ components
\[ \widetilde{\rho}(\mathbb{0},x\mathbb{1},2x\mathbb{1},...,(p-1)x\mathbb{1}) \] \[ = \exp\left(\widetilde{\varphi}\left(D^2_XpEX^2(\mathbb{0})+D^2_Xp(p-1)\rho_X(\sqrt{n}x)\right)-\widetilde{\varphi}\left(\widetilde{\varphi}^{-1}(p\ln(Ee^{X(\mathbb{0})}))\right)\right).\] As $\Lambda(\cdot)$ is $p$-weakly associated, Remark \ref{remark_assoc_main} implies that $\widetilde{\rho}(\cdot)\geq 1.$

From Definition \ref{n_function} one gets that  $\widetilde{\varphi}(x)$ is an increasing function for sufficiently large $x$. By choosing large $p,$ from the proof of \textit{(i)} it follows that $D^2_X   p \rho_X(\mathbb{0}) \leq \widetilde{\varphi}^{-1}\left(p\ln\left( Ee^{X(\mathbb{0})}\right)\right).$ Thus, $\widetilde{\varphi}\left(D^2_X  p \rho_X(\mathbb{0})\right) \leq \widetilde{\varphi}\left(\widetilde{\varphi}^{-1}\left(p\ln\left( Ee^{X(\mathbb{0})}\right)\right)\right) =  p\ln\left( Ee^{X(\mathbb{0})}\right),$ and
\[ \widetilde{\rho}(\mathbb{0},x\mathbb{1},2x\mathbb{1},...,(p-1)x\mathbb{1}) \leq \exp\left(\widetilde{\varphi}(D^2_Xp\rho_X(0)+D^2_Xp(p-1)\rho_X(\sqrt{n}x))-\widetilde{\varphi}\left(D^2_X p \rho_X(\mathbb{0})\right)\right). \] It follows from the mean value theorem that there exists such a constant $C_3:=\sup_x\widetilde{\varphi}'(x),$ $x\in[D^2_Xp\rho_X(0), D^2_Xp^2\rho_X(0)],$ that
\[\exp\left(\widetilde{\varphi}(D^2_Xp\rho_X(0)+D^2_Xp(p-1)\rho_X(\sqrt{n}x))-\widetilde{\varphi}(D^2_Xp\rho_X(0))\right) \leq e^{C_3D^2_Xp(p-1)\rho_X(\sqrt{n}x)}.\] Thus, $\ln \widetilde{\rho}(\mathbb{0},x\mathbb{1},2x\mathbb{1},...,(p-1)x\mathbb{1}) \leq C_2 x^\alpha,\ x\geq x_0,$ if $\rho(\sqrt{n}x)\leq C_2x^\alpha,\ x\geq x_0.$

Noting that $E\Lambda^p(\mathbb{0}) \leq \exp\left({\widetilde{\varphi}(p^2D^2_X\rho_X(0))-p\ln Ee^{X(\mathbb{0})}}\right)$ and applying Corollary \ref{cor_widetilde_rho} complete the proof. \end{proof}

Finally, we present some results for the geometric Gaussian case, which is a specific example with $\varphi(x)=x^2/2.$

\begin{corollary}
\label{gauss}
Let Assumption {\rm \ref{assump_sub}} be satisfied for $\textit{\textbf{p}} = (1,1,...,1),$ $p \geq 2,$  and $X(\cdot)$ is a zero-mean homogeneous and isotropic Gaussian random field. If $b > \exp\left({\frac{p(p-1)EX^2(\mathbb{0})}{2n}}\right)$ and \eqref{cor_sub_cond_2} holds true, then

\begin{enumerate}[label=(\roman*)]
\item For every fixed $\textit{\textbf{t}} \in P_n[\mathbb{0},\mathbb{1}]$ and for all $q\in[0,p],$ the random variables $A_m(\textit{\textbf{t}})$  converge to some random variables $A(\textit{\textbf{t}})$ in the spaces $L_q,$ as $m \to \infty;$
\item If $p$ is an even integer and there exist such $x_0$ and $C_1>0$ that for $x\geq x_0$
\begin{equation*}
0\leq\rho_X(\sqrt{n}x)\leq C_1x^{-\alpha}, \ \alpha> \frac{p(p-1)EX^2(\mathbb{0})}{2 \ln b},
\end{equation*} then for all $\textit{\textbf{t}}\in P_n[\mathbb{0},\mathbb{1}]$ and $q\in[2,p]$
\begin{equation*}
E|A(\textit{\textbf{t}})-A_m(\textit{\textbf{t}})|^q\leq C \left(\prod_{i=1}^n t_i\right)^{q-1} \left(\frac{\exp \left( \frac{p(p-1)EX^2(\mathbb{0})}{2}\right)}{b^{n }}\right)^{m}.
\end{equation*}
If
$b > \exp\left({\frac{\gamma p(p-1)EX^2(\mathbb{0})}{2n(\gamma-1)}} \right)$
 for some $\gamma\in(\max(1,n/\alpha), n\ln (b)/ln(E\Lambda^p(\mathbb{0}))),$ then for all $\textit{\textbf{t}}\in P_n[\mathbb{0},\mathbb{1}]$ and $q\in[2,p]$
\begin{equation*}
E|A(\textit{\textbf{t}})-A_m(\textit{\textbf{t}})|^q\leq C  \left( \frac{ \prod_{i=1}^n t_i ^{q-1}}{b^{nm}}\right)^{1/\gamma}.
\end{equation*}
\item It holds
\[ \lim\limits_{m\to\infty} \mu_m(B_j) = \mu(B_j) \ \ \ \ a.s.,\] where $\mathfrak{B} = \{B_j: \ B_j\subseteq P_n[\mathbb{0} ,\mathbb{1}]\}$ is a finite or countable family of Borel sets, and if
\[ \sum_{m=1}^{\infty} (\rho_X(0)-\rho_X(b^{-m}\mathbb{1})) < \infty,\] then the random measure $\mu(\cdot)$ possesses the following  R{\'e}nyi function
\[ T(q) = q-1-\frac{1}{n}\log_be^\frac{q(q-1)EX^2(\mathbb{0})}{2}, \ q\in[0,p].\]

\end{enumerate}

\end{corollary}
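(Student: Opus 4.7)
All three parts will specialize the general results of Sections \ref{sec2}--\ref{sec3} to the log-Gaussian scenario $\varphi(x)=x^2/2$, for which $\widetilde{\varphi}(y)=y/2$, $\widetilde{\varphi}^{-1}(y)=2y$, $D_X=1$, $Ee^{X(\mathbb{0})}=e^{EX^2(\mathbb{0})/2}$, and $E\Lambda^q(\mathbb{0})=e^{q(q-1)EX^2(\mathbb{0})/2}$. From \eqref{moment}, the mixed moments admit the closed form
\[\rho(\mathbb{0},x\mathbb{1},\dots,(k-1)x\mathbb{1},\textit{\textbf{q}})=\exp\!\Big(\tfrac{EX^2(\mathbb{0})}{2}\big(\textstyle\sum_j q_j^2-\sum_j q_j\big)+\textstyle\sum_{1\le l<h\le k}q_l q_h\,\rho_X(\sqrt{n}(h-l)x)\Big),\]
which will be the main analytic input throughout.

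For part (i), I will apply Theorem \ref{th_sub} with $\textit{\textbf{p}}=(1,\dots,1)$. The Gaussian identities above reduce condition \eqref{cond1_sub} exactly to $b>\exp(p(p-1)EX^2(\mathbb{0})/(2n))$; in \eqref{cond2_sub} the diagonal contribution $D_X^2 p\rho_X(0)=pEX^2(\mathbb{0})$ cancels with $\widetilde{\varphi}^{-1}(p\ln Ee^{X(\mathbb{0})})$, leaving only $2\sum_{i\ge 0}\sum_{1\le l<h\le p}\rho_X(\sqrt{n}(h-l)b^i)$, which is dominated by a constant multiple of $\sum_{i\ge 0}\rho_X(\sqrt{n}b^i)$ by monotonicity of $\rho_X$, hence finite by \eqref{cor_sub_cond_2}.

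For part (ii), the explicit form $\ln\rho(\mathbb{0},x\mathbb{1},\dots,(p-1)x\mathbb{1})=\sum_{1\le l<h\le p}\rho_X(\sqrt{n}(h-l)x)\le C\,x^{-\alpha}$ lets me invoke Corollary \ref{rate_theorem_q} for the slower rate and Theorem \ref{rate_theorem_lq_fast} for the faster one, the latter's hypothesis $\alpha>\ln(E\Lambda^p(\mathbb{0}))/\ln b$ rewriting as the stated $\alpha>p(p-1)EX^2(\mathbb{0})/(2\ln b)$ and its lower bound on $b$ matching the stated exponential one. The positivity $\rho(\cdot)\ge 1$ required by both results is automatic since $\rho_X\ge 0$, and Lyapunov's inequality extends the bounds from even $p$ to all $q\in[2,p]$.

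Part (iii) is the most delicate. Almost sure convergence $\mu_m(B_j)\to\mu(B_j)$ on any finite or countable family $\mathfrak{B}$ is immediate from Corollary \ref{measure}. For the R\'enyi function I will apply Theorem \ref{renyi_func_theorem}, which reduces the task to verifying the two summability hypotheses of Lemma \ref{lemma_moments} for all $q\in[0,p]$. A direct log-normal calculation yields
\[\ln\frac{\rho(\mathbb{0},b^{-i}\mathbb{1},(q-1,1))}{E\Lambda^q(\mathbb{0})}=(1-q)\big(\rho_X(0)-\rho_X(\sqrt{n}b^{-i})\big),\quad q\in(0,1),\]
\[\ln\frac{E\Lambda^q(\mathbb{0})}{\rho(\mathbb{0},b^{-i}\mathbb{1},\dots,b^{-i}(k-1)\mathbb{1},\widetilde{\textit{\textbf{p}}})}=\frac{q^2}{k^2}\sum_{l=1}^{k-1}(k-l)\big(\rho_X(0)-\rho_X(l\sqrt{n}b^{-i})\big),\quad q\ge 1,\]
with $\widetilde{\textit{\textbf{p}}}=(q/k,\dots,q/k)$. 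The required monotonicity of $\rho(\mathbb{0},\textit{\textbf{x}},(q-1,1))$ in $\|\textit{\textbf{x}}\|$ for $q<1$ follows since $(q-1)\rho_X(\cdot)$ is then nondecreasing in its argument, and a shift of index in the second display shows that both summability conditions reduce to the single stated hypothesis $\sum_m(\rho_X(0)-\rho_X(b^{-m}\mathbb{1}))<\infty$. Theorem \ref{renyi_func_theorem} then delivers $T(q)=q-1-n^{-1}\log_b E\Lambda^q(\mathbb{0})=q-1-n^{-1}\log_b e^{q(q-1)EX^2(\mathbb{0})/2}$. The main obstacle is precisely this reduction of two seemingly different summability hypotheses to a single one on $\rho_X(0)-\rho_X(b^{-m}\mathbb{1})$; everything else is routine bookkeeping of the log-normal specialization.
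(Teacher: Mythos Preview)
Your proposal is correct and follows essentially the same route as the paper's proof: specialize $\varphi(x)=x^2/2$, verify the hypotheses of Theorem~\ref{th_sub} for (i), of Corollary~\ref{rate_theorem_q} and Theorem~\ref{rate_theorem_lq_fast} for (ii), and of Lemma~\ref{lemma_moments}/Theorem~\ref{renyi_func_theorem} for (iii) via the explicit log-normal formula \eqref{moment}. The only notable difference is in part (iii): for the $q\ge 1$ summability hypothesis of Lemma~\ref{lemma_moments} you keep a general $k$-component vector $\widetilde{\textit{\textbf{p}}}=(q/k,\dots,q/k)$ and then reduce the resulting sum $\sum_{l=1}^{k-1}(k-l)(\rho_X(0)-\rho_X(l\sqrt{n}b^{-i}))$ to the stated hypothesis by an index shift, whereas the paper simply takes $k=2$ so that only the single term $\rho_X(0)-\rho_X(\sqrt{n}b^{-i})$ appears and no shift is needed---a slightly cleaner bookkeeping choice that you may wish to adopt.
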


\begin{proof}
\textit{(i)} Let us check the conditions of Theorem \ref{th_sub} when $\textit{\textbf{p}}=(1,1,...,1)$ consists of $p$ elements equal $1$. As $X(\cdot)$ is a Gaussian random field, it belongs to the class of $\varphi$-sub-Gaussian random fields with $\widetilde{\varphi}(x)=x/2,$ $D_X=1$, and one gets
\[  \exp\left({\frac{1}{n}}\left(\widetilde{\varphi}\left(p^2D^2_XEX^2(\mathbb{0})\right)- p\ln\left( E e^{X(\mathbb{0})} \right)\right)\right) =   e^{\frac{p(p-1)EX^2(\mathbb{0})}{2n}}.\]
By Remark \ref{remark_sub}
\[ \sum_{i=0}^\infty\left(D_X^2\sum\limits_{l,h=1}^pp_lp_h\rho_X(||b^i(l-h)\mathbb{1}||)-\widetilde{\varphi}^{-1}\left(p\ln\left( E e^{X(\mathbb{0})}\right)\right)\right) \]
\[ = \sum_{i=0}^\infty\left( \sum_{j=1}^{p-1}2(p-j)\rho_X(\sqrt{n}b^i j) + p\rho_X(0) -2p\ln e^{\rho_X(0)/2} \right)  \leq C \sum_{i=0}^\infty \rho_X(\sqrt{n} b^i). \] Hence, the conditions of Theorem \ref{th_sub} are satisfied which implies \textit{(i)}.

\textit{(ii)} By \eqref{moment},
\[ \ln(\rho(\mathbb{0},x\mathbb{1},...,(p-1)x\mathbb{1} )) = \ln\left( \frac{Ee^{X(\mathbb{0})+X(x\mathbb{1})+...+X(x(p-1)\mathbb{1})}}{\left(Ee^{X(\mathbb{0})}\right)^p} \right)\]
\[\leq \ln e^{\frac{p(p-1)}{2}\rho_X(\sqrt{n}x)} = \frac{p(p-1)}{2}\rho_X(\sqrt{n}x),\] and
\[ \frac{\ln(E\Lambda^p(\mathbb{0}))}{\ln b} =  \frac{p(p-1)EX^2(\mathbb{0})}{2 \ln b}.\]

Thus, under the assumption in \textit{(ii)}, the conditions of Corollary \ref{rate_theorem_q} and Theorem \ref{rate_theorem_lq_fast} are satisfied.

\textit{(iii)} Now let us check the conditions of Lemma \ref{lemma_moments}. By \eqref{moment} and Assumption \ref{assump_sub}, for  $\textit{\textbf{q}}=(q-1,1),\ q\in(0,1),$
\[\sum_{i=1}^{\infty} \ln\left(\frac{\rho(\mathbb{0}, b^{-i}\mathbb{1},\textit{\textbf{q}})}{E\Lambda^q(\mathbb{0})}\right) =(1-q)\sum_{i=1}^{\infty}\left(EX^2(\mathbb{0})-\rho_X(\sqrt{n}b^{-i})\right).\] For $q\geq1$ let us choose $\widetilde{\textit{\textbf{p}}}=(\widetilde{p},\widetilde{p}),$ $\widetilde{p}=q/2.$
Then
\[ \sum_{i=1}^{\infty} \ln\left(\frac{\rho(\mathbb{0},b^{-i}\mathbb{1},\widetilde{\textit{\textbf{p}}}))}{E\Lambda^q(\mathbb{0})}\right) = -\widetilde{p}^2 \sum_{i=1}^{\infty}\left(EX^2(\mathbb{0})-\rho_X(\sqrt{n}b^{-i})\right).\] Thus, the conditions of Theorem \ref{renyi_func_theorem} hold if $\sum_{i=1}^{\infty}\left(EX^2(\mathbb{0})-\rho_X(\sqrt{n}b^{-i})\right)<\infty,$ which completes the proof. \end{proof}

\section*{Acknowledgments}

This research was partially supported under the Australian Research Council's Discovery Projects funding scheme (project number  DP220101680). We also would like to thank Professors Antoine Ayache and Nikolai Leonenko for insightful discussions of related topics for random fields.

\bibliographystyle{amsplain}
\bibliography{mybib}
\end{document}